
\documentclass{mcom-l}




\usepackage[normalem]{ulem} 


\usepackage{color}
\usepackage{amsmath}
\usepackage{amsfonts}
\usepackage{amssymb}%
\usepackage{latexsym}
\usepackage{graphicx}
\usepackage{psfrag}
\usepackage{multirow}

\setcounter{MaxMatrixCols}{30}

\newtheorem{theorem}{Theorem}[section]
\newtheorem{corollary}[theorem]{Corollary}
\newtheorem{definition}[theorem]{Definition}
\newtheorem{lemma}[theorem]{Lemma}
\newtheorem{proposition}[theorem]{Proposition}

\newtheorem{assumption}[theorem]{Assumption}
\newtheorem{example}[theorem]{Example}
\newtheorem{remark}[theorem]{Remark}

\numberwithin{equation}{section}

\usepackage[a4paper]{geometry}
\geometry{left={3cm}, right={3cm}, top={3cm}, bottom={3cm}}

%

%


\newcommand{\R}{\mathbb{R}}

\newcommand{\cC}{{\mathcal C}}

\newcommand{\cI}{{\mathcal I}}

\newcommand{\cT}{{\mathcal T}}
\newcommand{\cK}{{\mathcal K}}

\newcommand{\cO}{{\mathcal O}}
\newcommand{\bx}{\mathbf{x}}

\newcommand{\bbb}{\mathbf{b}}
\newcommand{\bC}{\mathbf{C}}

\newcommand{\bI}{\mathbf{I}}
\newcommand{\br}{\mathbf{r}}
\newcommand{\bV}{\mathbf{V}}
\newcommand{\bW}{\mathbf{W}}

\newcommand{\bu}{\mathbf{u}}
\newcommand{\be}{\mathbf{e}}
\newcommand{\bn}{\mathbf{n}}

\newcommand{\bz}{\mathbf{z}}

\newcommand{\bfr}{\mathbf{r}}
\newcommand{\bfd}{\mathbf{d}}
\newcommand{\bg}{\mathbf{g}}

\newcommand{\veps}{\vert \abs \vert}


\newcommand{\C}{\mathbb{C}}

\newcommand{\tOmega}{\widetilde{\Omega}}

\newcommand{\bphi}{\boldsymbol{\phi}}

\newcommand{\re}{{\rm e}}
\newcommand{\ri}{{\rm i}}


\newcommand{\beq}{\begin{equation}}
\newcommand{\eeq}{\end{equation}}
\newcommand{\beqs}{\begin{equation*}}
\newcommand{\eeqs}{\end{equation*}}
\newcommand{\bit}{\begin{itemize}}
\newcommand{\eit}{\end{itemize}}
\newcommand{\ben}{\begin{enumerate}}
\newcommand{\een}{\end{enumerate}}
\newcommand{\bal}{\begin{align}}
\newcommand{\eal}{\end{align}}
\newcommand{\bals}{\begin{align*}}
\newcommand{\eals}{\end{align*}}
\newcommand{\bse}{\begin{subequations}}
\newcommand{\ese}{\end{subequations}}
\newcommand{\bpr}{\begin{proposition}}
\newcommand{\epr}{\end{proposition}}
\newcommand{\bre}{\begin{remark}}
\newcommand{\ere}{\end{remark}}
\newcommand{\bpf}{\begin{proof}}
\newcommand{\epf}{\end{proof}}
\newcommand{\ble}{\begin{lemma}}
\newcommand{\ele}{\end{lemma}}
\newcommand{\bco}{\begin{corollary}}
\newcommand{\eco}{\end{corollary}}
\newcommand{\bex}{\begin{example}}
\newcommand{\eex}{\end{example}}
\newcommand{\bth}{\begin{theorem}}
\newcommand{\enth}{\end{theorem}}


\newcommand{\Com}{\mathbb{C}}

\newcommand{\supp}{\mathop{{\rm supp}}}

\newcommand{\po}{\partial \Omega}

\newcommand{\eps}{\varepsilon}

\newcommand{\nrme}[1]{\N{#1}_{\weight}}

\newcommand{\Vell}{\Ned^\ell}

\newcommand{\ksqeps}{\frac{\wn^2}{\vert\abs\vert}}



\newcommand{\LtO}{L^2(\Omega)}
\newcommand{\bLtO}{{{\bf L}^2(\Omega)}}

\newcommand{\tendi}{\rightarrow \infty}
\newcommand{\tendo}{\rightarrow 0}

\newcommand{\matrixA}{A}
\newcommand{\matrixAeps}{{A}_\abs}
\newcommand{\matrixAepsinv}{\matrixAeps^{-1}}

\newcommand{\matrixAepsell}{A_{\abs}^\ell}

\newcommand{\matrixAepszero}{A_{\abs}^0}

\newcommand{\bfzero}{\mathbf{0}}

\newcommand{\Qepsell}{\proj_{\abs}^{\ell}}

\newcommand{\aeps}{a_\abs}




\def\XXint#1#2#3{{\setbox0=\hbox{$#1{#2#3}{\int}$}
     \vcenter{\hbox{$#2#3$}}\kern-.5\wd0}}

\newcommand*{\N}[1]{\left\|#1\right\|}


\newcommand{\matrixB}{{ B}}
\newcommand{\matrixBepsinv}{{ B}_\abs^{-1}}

\newcommand{\matrixC}{{C}}
\newcommand{\matrixI}{{ I}}

%

\newcommand{\bfx}{\mathbf{x}}


%


\usepackage{hyperref}
\definecolor{myblue}{rgb}{0,0,0.6}
\hypersetup{colorlinks=true, linkcolor=myblue,citecolor=myblue,filecolor=myblue,urlcolor=myblue}

\allowdisplaybreaks[4]

\newcommand{\ton}{\text{ on }}
\newcommand{\tin}{\text{ in }}
\newcommand{\tfa}{\text{ for all }}
\newcommand{\tfor}{\text{ for }}
\newcommand{\tas}{\text{ as }}
\newcommand{\tand}{\text{ and }}


\newcommand{\kseb}{\left(\frac{\wn^2}{\veps}\right)}
\newcommand{\eksb}{\left(\frac{\veps}{\wn^2}\right)}

\newcommand{\Hsub}{H_{\rm{sub}}}


\newcommand{\bE}{\mathbf{E}}

\newcommand{\HocO}{{\bf H}_0(\curlt;\Omega)}
\newcommand{\HcO}{{\bf H}(\curlt;\Omega)}

\newcommand{\bv}{{\bf v}}
\newcommand{\bq}{{\bf q}}
\newcommand{\bw}{{\bf w}}
\newcommand{\bvb}{\overline{{\bf v}}}

\newcommand{\bze}{\mathbf{0}}
\newcommand{\curlt}{{\rm{\bf curl}}}
\newcommand{\curl}{{\rm \bf curl }\,}
\newcommand{\dive}{{\rm \bf div }\,}
\newcommand{\grad}{{\rm \bf grad }\,}
\newcommand{\noi}{\noindent}

\newcommand{\bF}{\mathbf{F}}
\newcommand{\bG}{\mathbf{G}}
\newcommand{\bS}{\mathbf{S}}
\newcommand{\bQ}{\mathbf{Q}}

\newcommand{\boldT}{\mathbf{T}}
\newcommand{\bPsi}{\mathbf{\Psi}}
\newcommand{\bPhi}{\mathbf{\Phi}}
\newcommand{\Hdiv}{{{\bf H}({\rm {\bf div}};\Omega)}}
\newcommand{\Hdivo}{{{\bf H}(\dive^0;\Omega)}}
\newcommand{\Hdivoo}{{{\bf H}_0(\dive;\Omega)}}


\newcommand{\Ccont}{}

\newcommand{\Cstab}{}
\newcommand{\Cdual}{}
\newcommand{\Ccs}{}
\newcommand{\Csub}{}
\newcommand{\Creg}{}
\newcommand{\Crego}{}
\newcommand{\Cregt}{}
\newcommand{\Hcs}{H}
\newcommand{\wn}{k}
\newcommand{\abs}{\xi}
\newcommand{\Ned}{{\rm {\bf Q}}}
\newcommand{\RT}{{\rm {\bf V}}}
\newcommand{\dual}{\mathbf{s}}

\newcommand{\normeps}{}
\newcommand{\weight}{\curlt, \wn}

\newcommand{\proj}{\boldT}

\newcommand{\Nsub}{N_{\rm{sub}}}

\newcommand{\ncs}{n_{\rm{cs}}}
\newcommand{\Nedelec}{N\'{e}d\'{e}lec }


\usepackage{soul}

\definecolor{amcol}{rgb}{0.8,0,0}

\newcommand{\es}[1]{{#1}}
\newcommand{\igg}[1]{{#1}}
\newcommand{\mb}[1]{{#1}}

\newcommand{\xiprob}{\xi_{\mathrm{prob}}}
\newcommand{\xiprec}{\xi_{\mathrm{prec}}}

\newcommand{\neweq}{\simeq}



\begin{document}

\title[Domain decomposition for high-frequency Maxwell with absorption]
{Domain decomposition  preconditioning  for  the 
high-frequency 
time-harmonic Maxwell equations with absorption }

\author{M. Bonazzoli}
\address{\mb{Sorbonne Universit\'e, Universit\'e Paris-Diderot SPC, CNRS, Inria, Laboratoire Jacques-Louis Lions, \'equipe Alpines, F-75005 Paris}}
      \email{\mb{marcella.bonazzoli@inria.fr}}
%
\author{\,\,V. Dolean}
\address{Department of Mathematics and Statistics, University of Strathclyde, Glasgow, G1 1XH, UK, and 
Universit\'e C\^ote d'Azur, CNRS, Laboratoire J-A Dieudonn\'e, France
}
\email{victorita.dolean@strath.ac.uk}
\thanks{}

\author{\,\,I.~G.~Graham}
\address{Department of Mathematical Sciences, University of Bath,
Bath BA2 7AY, UK.}
\curraddr{}
\email{I.G.Graham@bath.ac.uk}
\thanks{}

\author{\,\,E.~A.~Spence}
\address{Department of Mathematical Sciences, University of Bath,
Bath BA2 7AY, UK.}
\email{E.A.Spence@bath.ac.uk}
\thanks{} 

\author{\,\,P.-H. Tournier}
\address{\mb{Sorbonne Universit\'e, Universit\'e Paris-Diderot SPC, CNRS, Inria, Laboratoire Jacques-Louis Lions, \'equipe Alpines, F-75005 Paris}}
\email{tournier@ljll.math.upmc.fr}

\begin{abstract}
\igg{This paper rigorously analyses preconditioners for  the time-harmonic Maxwell equations with absorption, where the  PDE is discretised using curl-conforming finite-element methods of fixed, arbitrary order and the 
preconditioner is constructed  using Additive Schwarz domain decomposition methods.
The theory developed here shows that if the absorption is large enough, 
and if the subdomain and coarse mesh diameters and overlap  are chosen appropriately, 
then the classical two-level overlapping Additive Schwarz preconditioner 
(with PEC boundary conditions on the subdomains) performs optimally  -- in the sense that GMRES converges in a wavenumber-independent number of iterations -- for the problem with absorption.
An important feature of the theory is that it allows the 
coarse space to be built from low-order elements even if the PDE is discretised using high-order elements. It
also shows that additive methods with minimal overlap can be robust.
Numerical experiments are given that  illustrate the theory and its dependence on various parameters.
These experiments motivate some extensions of the preconditioners which have better robustness for problems with less absorption, including the propagative case. At the end of the paper we illustrate the performance of  these on     two 
substantial  applications; the first (a problem with absorption arising from medical imaging) shows the empirical  robustness of the preconditioner against heterogeneity, and the second (scattering by a COBRA cavity) shows good scalability of the preconditioner with up to 3,000 processors. }

\

\paragraph{Keywords:} Maxwell equations, high frequency, absorption, iterative solvers, preconditioning, domain decomposition, GMRES.

\

\paragraph{AMS Subject Classification:} 35Q61, 65N55, 65F08, 65F10, 65N30, 78A45.

\end{abstract}

\maketitle

\section{Introduction}\label{sec:intro}

The construction of fast iterative solvers for the indefinite time-harmonic Maxwell system at high-frequency is a problem of great current interest. Some of the difficulties that arise are similar to those encountered for the high-frequency Helmholtz equation, and in this paper we investigate how Domain Decomposition (DD) solvers recently proposed for the high-frequency Helmholtz equation work in the Maxwell case. These solvers are built from preconditioners for the discretised boundary value problem (BVP) with added absorption. In the Helmholtz context, the idea of preconditioning with absorption originated in \cite{ErVuOo:04}, and is often called ``shifted Laplacian preconditioning" (see, e.g., the recent collection of articles \cite{LaTaVu:17}). The paper \cite{GrSpVa:17} proposed a two-level domain decomposition preconditioner for the Helmholtz equation based on adding absorption, and, from the point of view of analysis, the present paper is roughly-speaking the Maxwell analogue of this Helmholtz work. We emphasise, however, that the Maxwell theory in the present paper entails additional technical difficulties and we need to use both the theory for the ``positive-definite" case in \cite{HiTo:00}, \cite{To:00}, \cite{PaZh:02} and the theory for the indefinite case (for small wavenumber) in \cite{GoPa:03}; in particular, the technical heart of the paper, \S\ref{sec:43}, contains wavenumber-explicit analogous of results in \cite{GoPa:03}. 

\subsection{The Maxwell boundary value problems}

The time-harmonic Maxwell equations, written in terms of the (time-harmonic) electric field $\bE$, are
\beq\label{eq:new1}
\curl \left(\frac{1}{\mu}
\curl \bE\right) -\left( \eps \omega^2 +\ri \sigma\omega\right) \bE = \bG,
\eeq
where $\bG$ is the source term, 
$\mu$ is the magnetic permeability, $\eps$ is the dielectric constant, $\sigma$ is the conductivity, 
and $\omega$ is the angular frequency; \eqref{eq:new1} arises from assuming in the time-dependent Maxwell equations that the fields and source depend on time via $\re^{-\ri \omega t}$.

In the case of propagation through a homogeneous medium,
$\mu=\mu_0$, $\eps=\eps_0$, and $\sigma=\sigma_0$, where  
$\mu_0$, $\eps_0$, and $\sigma_0$ are all positive constants. Then, with the wavenumber $k$ defined by $k:=\omega \sqrt{\eps_0\mu_0}$ and $\bF:= \mu_0\bG$, \eqref{eq:new1} becomes
\beq\label{eq:new2}
\curl (\curl \bE) - \left(k^2 + \ri k \sigma_0 \sqrt{\frac{\mu_0}{\eps_0}}\right)\bE = \bF.
\eeq
(The quantity $\sqrt{{\mu_0}/{\eps_0}}$ is often called the {\em vacuum impedance}.) 
We consider domain-decomposition preconditioning for finite-element discretizations of the more general partial differential equation (PDE)
\beq\label{eq:2}
\curl\left(
\curl \bE\right) - (\wn^2+ \ri \abs) \bE = \bF
\eeq
with $\abs \in \mathbb{R}\setminus\{0\}$. 
Our theory is for the PDE \eqref{eq:2} posed in a bounded  Lipschitz polyhedron $\Omega$ with the perfect electric conductor (PEC) boundary conditions
\beq
\label{eq:PECcond}
\bE\times \bn =\bze \quad \ton \po.
\eeq
We also give numerical experiments, however, for the boundary value problem (BVP) of \eqref{eq:2} supplemented with the impedance boundary conditions 
\beq\label{eq:imp}
(\curl\bE)\times \bn - \ri \ \text{sign}(\abs) k  (\bn \times \bE)\times \bn = \bze  \quad \ton \po;
\eeq
we discuss the prospects of extending our theory to this case in Remark \ref{rem:imp}.

In the case that the ``absorption parameter" $\abs$ equals $k\sigma_0 \sqrt{ \mu_0/\eps_0}$, \eqref{eq:2} becomes \eqref{eq:new2}.
Although one of our numerical experiments concerns a practical heterogenous problem from medical imaging where $\sigma>0$ (see \S\ref{subsec:medimax}),
our main motivation for considering discretisations of \eqref{eq:2} is as preconditioners for the ``indefinite" Maxwell problem
\beq\label{eq:1}
\curl \left(
\curl \bE\right) - k^2\bE = \bF,
\eeq
with the same boundary conditions as prescribed for \eqref{eq:2}. 

\subsection{Analysing preconditioning with absorption}\label{sec:analyse}

We now discuss the rationale of preconditioning discretisations of BVPs involving the PDE \eqref{eq:1} with discretisations of \eqref{eq:2}. 
Much of this discussion is independent of the boundary conditions in the BVP, but we highlight when they play a role. 
We denote the Galerkin matrix arising from discretising \eqref{eq:2} with Ned\'el\'ec elements (of arbitrary order) by $A_\abs$, and thus the corresponding Galerkin matrix of \eqref{eq:1} is $A_0$, which we also denote by $A$.
Solving the linear system $A\bx = \bbb$ is difficult when $k$ is large because (i) the dimension of the matrix $A$ must grow at least like $k^3$ as $k$ increases to resolve the oscillations of the solution, (ii) $A$ is indefinite when $\wn$ is sufficiently large, and (iii) $A$ is non-Hermitian, and in general non-normal, when the BVP contains an impedance boundary condition.
With system matrices like these, general iterative methods like preconditioned (F)GMRES have to be employed. However, analyzing the convergence of these methods is hard, since an analysis of the spectrum of the system matrix alone is not sufficient for any rigorous convergence estimates.

We therefore want to find a ``good" preconditioner for $A$, in the sense that we would like the number of iterations needed to solve the preconditioned system to be independent of $\wn$, and we would also like the preconditioner to be, roughly speaking, as parallelisable as possible. 

Our preconditioning strategy (written in left-preconditioning form) is to
iteratively solve
 \beqs
\matrixBepsinv \matrixA \bx = \matrixBepsinv\bbb, 
\eeqs
where $B_\abs^{-1}$ is  an  approximation   of  
$A_\abs^{-1}$ computed using DD. The idea is that as $|\abs|$ increases it becomes easier to calculate a good approximation of $A_\abs^{-1}$ (since the problem becomes less wave-like and more ``elliptic"\footnote{Statements like this frequently appear in the literature; one way of understanding them rigorously is via the coercivity result of Lemma \ref{lem:coer} below.}), but $|\abs|$ cannot increase too much, otherwise $A_\abs^{-1}$ is ``too far away" from $A^{-1}$.
To understand this better, we write
\beq\label{eq:key_intro}
\matrixBepsinv\matrixA  = \matrixBepsinv \matrixA_{\abs}
 - \matrixBepsinv \matrixA_{\abs} (\matrixI - \matrixA_{\abs}^{-1} 
\matrixA),
\eeq
and recall that a sufficient (but by no means necessary) condition
for GMRES applied to a matrix $C$ to converge is that the \emph{field of values} (also called the \emph{numerical range}) of $C$ is bounded away from the origin (see \S\ref{sec:52} below for more discussion on this).
It is therefore clear from \eqref{eq:key_intro} that sufficient conditions  for  
$B_\eps^{-1}$ to be a good preconditioner for $A$ are: 
\begin{quotation} 
(i) $\matrixAepsinv$ is  a good
preconditioner for $\matrixA$ in the sense that $\|I -  \matrixA_{\abs}^{-1}A\|$ is small, independently of $\wn$,
\end{quotation} 
and 
 \begin{quotation} 
(ii) $\matrixBepsinv$ is  a good
preconditioner for $\matrixAeps$ in the sense that both the norm and the distance of the field of values from the origin of $\matrixB_\abs^{-1}\matrixAeps$ are bounded independently of $\wn$.
 \end{quotation} 
From the discussion above, we expect (i) to be achieved when $|\abs|$ is sufficiently small, and (ii) to be achieved when $|\abs|$ is sufficiently large.
Furthermore, we expect the boundary conditions in the BVP to affect the answer to (i). 
 For the PEC BVP, one expects (i) to be achieved when $|\abs|$ is sufficiently small and $\wn$ is such that the BVP with $\abs=0$ has a unique solution. 
For the impedance BVP, one expects (i) to be achieved when $|\abs|$ is sufficiently small, for all $\wn>0$.

\subsection{Previous results on achieving (i) and (ii) for the Helmholtz equation}

In case of the Helmholtz equation $\Delta u + (\wn^2+ \ri \abs)u=f$, \cite{GaGrSp:15} showed that, for the interior impedance and truncated exterior Dirichlet BVPs (which both have unique solutions for all $\wn>0$), (i) holds when $|\abs|/\wn$ is sufficiently small. 
The message from the combination of \cite{CoGa:17}, \cite{GrSpVa:17}, and \cite{ErGa:12} is that one needs $|\abs|\sim\wn^2$ for standard ``coercive elliptic" technology such as multigrid and classical additive Schwarz DD (with Dirichlet boundary conditions on the subdomains) to work in a $\wn$-independent way on $A_\abs$; indeed, \cite{GrSpVa:17} shows that (ii) holds for classical additive Schwarz DD with $|\abs|\sim\wn^2$, and \cite{CoGa:17} and \cite{ErGa:12} show that one needs $|\abs|\sim\wn^2$ for multigrid
to converge in a $\wn$-independent number of iterations.

One advantage of DD over multigrid in this context is that ``wave-based" components such as impedance or PML boundary conditions on the subdomains can more-easily be incorporated into DD preconditioners (see \S\ref{sec:discussion} below). The numerical experiments in \cite{GrSpVa:17} and \cite{GrSpVa:17a} (following earlier experiments in \cite{KiSa:07} and \cite{KiSa:13}) show that additive Schwarz DD preconditioners for $A$ can perform well for $\wn \lesssim \abs \ll \wn^2$ if 
\emph{impedance} boundary conditions are used on the subdomain problems, instead of Dirichlet ones, and these experiments are backed up by analysis in
\cite{GrSpZo:17} that shows that Property (ii) can be satisfied in some situations with $|\abs| \sim \wn^{1+\beta}$ for $\beta$ small.

\subsection{The main contributions of this paper}\label{sec:contributions}

\

\noindent\textbf{Theory:} The theoretical heart of this paper are results giving sufficient conditions for property (ii) above to hold 
when the classical two-level overlapping additive Schwarz DD preconditioner, with PEC boundary conditions on the subdomains, is applied to the 
PDE \eqref{eq:2} posed in a convex polyhedron $\Omega$ with a PEC boundary condition on $\partial \Omega$; these results are given in \S\ref{sec:53} - see Theorems \ref{cor:final2} and \ref{cor:final4}. 
This theory shows that property (ii) is achieved 
(i.e.~GMRES converges in a $\wn$-independent number of iterations when applied to $B_\abs^{-1} A_\abs$)
when $|\abs|\sim\wn^2$ and when the subdomain and coarse-grid diameters $H \sim \wn^{-1}$ and the overlap $\sim H$.
Another case that is robust is when the fine and coarse grid each have a fixed number of degrees of freedom
per wavelength and the  overlap is ``minimal'', i.e. of the order of the fine mesh diameter $ h$.  
 Our theory also gives results for $0<|\abs|\ll \wn^2$, but  these results are not sharp (especially in their restrictions on the subdomain and coarse-grid diameters).

We do not prove any results directly about property (i). However we expect that, at least for the Maxwell interior impedance problem, property (i) above holds when $|\abs|/\wn$ is sufficiently small, just as in the Helmholtz case. Indeed, in Appendix \ref{app:1} we prove the PDE-analogue of this result, i.e.~that the relative error between the solutions of \eqref{eq:1} and \eqref{eq:2} (with the same impedance boundary condition) is bounded as $\wn\tendi$ when $|\abs|/\wn$ is sufficiently small. The Helmholtz analogue of this result is proved in \cite[Theorem 6.1]{GaGrSp:15}, leading us to expect that the conditions for property (i) to hold for Maxwell are the same as for Helmholtz. 

We note that, with these results, there is a ``gap" between the $\abs$ for which Property (ii) is proved to hold ($|\abs|\sim\wn^2$), and the $\abs$ for which we expect Property (i) to hold ($|\abs|\lesssim \wn$), but we expect this gap will decrease when the PEC boundary conditions on the subdomains are replaced by impedance boundary conditions, just as in the Helmholtz case \cite{GrSpZo:17}; this will be rigorously explored in future work.

\igg{\noindent\textbf{Numerical Experiments:} 
 Numerical experiments are given in \S\ref{sec:num}.}
\igg{In \S \ref{sec:6.2} we illustrate our theoretical results concerning  property (ii), and the effect of various
    choices of absorption,  
    coarse grid  diameter,  subdomain diameter and   overlap. We also 
investigate the effect of the
    choice of inner product on the convergence of GMRES.} 

\igg{The theory helps  us identify alternative preconditioners (not covered by the theory) that  have better robustness properties, especially in the propagative case $\abs = 0$. These are introduced in \S \ref{sec:6.3}.  In \S \S
 \ref{subsec:medimax} and \ref{subsec:cobra}, the alternative preconditioners are applied to two substantial practical examples.}
\igg{For 
    a heterogeneous  absorptive problem     
  (arising from medical imaging), 
  preconditioning $A_\abs$ with $B^{-1}_\abs$ is robust with respect to the variation of $\eps, \mu,$ and $\sigma$. In particular, the performance of the analogous one-level preconditioner deteriorates 
significantly when $\sigma=0$ in some areas of the domain (in comparison to the situation where $\sigma>0$ everywhere), but the performance of  $B^{-1}_\abs$ does not; see \S\ref{subsec:medimax}.}
 \igg{On the so-called ``COBRA cavity" test case of scattering by a 3-d waveguide-like cavity, 
    preconditioning $A$ with $B^{-1}_\abs$ exhibits good scalability with up to 3000 processors; see \S\ref{subsec:cobra}.}

\subsection{Discussion of the novelty of our results in the context of the DD literature}\label{sec:discussion}

\

\

\noindent\textbf{Classic overlapping Schwarz methods for ``indefinite" Maxwell and Helmholtz.} Cai and Widlund \cite{CaWi:92} proved results about overlapping additive Schwarz preconditioners (with Dirichlet boundary conditions on the subdomains) for non-symmetric indefinite  second-order linear elliptic PDEs that are ``close to" a symmetric coercive PDE, i.e.~this theory treats the Helmholtz equation as a perturbation of the Laplace equation.
The disadvantage of this theory applied to the Helmholtz equation is that it essentially requires quasi-optimality of the coarse-grid problem, and 
then because of the pollution effect this requires that the coarse grid diameter $\Hcs$ should satisfy $\Hcs \ll k^{-1}$
(recall that with impedance boundary conditions on $\po$, quasioptimality is proved with $\Hcs \lesssim k^{-2}$ \cite[Proposition 8.2.7]{Me:95}, and 
even in 1-d this is sharp; see, e.g., the literature review on \cite[Pages 182 and 183]{GrLoMeSp:15}).
The problem of giving a theory of DD for the Helmholtz equation with practical coarse grid sizes of $\Hcs\gtrsim 1/k$ is therefore still open.

The Maxwell analogue of \cite{CaWi:92} is the work by Gopalakrishnan and Pasciak \cite{GoPa:03}. Indeed, via a perturbative approach, these authors studied classical additive Schwarz DD preconditioners (involving PEC boundary conditions on the subdomains) for \eqref{eq:1} with sufficiently small $k$ and posed in a convex polyhedron $\Omega$ with a PEC boundary condition on $\Omega$ (a similar approach was then applied to multigrid in \cite{GoPaDe:04}).

Our theory is the Maxwell analogue of \cite{GrSpVa:17}. 
The Helmholtz theory in \cite{GrSpVa:17} drew inspiration from the work of Cai and Widlund \cite{CaWi:92}, but did not use any specific technical results from that work.
The Maxwell theory in the present paper, however, uses in an essential way some of the technical results in \cite{GoPa:03}, modifying them appropriately using results about the Maxwell equations with absorption (see \S\ref{sec:43} below).

\

\noindent\textbf{Better boundary conditions on the subdomains: impedance, PML, optimised Schwarz.} 
Both overlapping and non-overlapping Schwarz methods usually perform much better for high-frequency Helmholtz and Maxwell problems if the Dirichlet/PEC boundary conditions on the subdomains are changed. 
In the Helmholtz case, impedance boundary conditions on the subdomains were proposed for non-overlapping methods in \cite{De:91, BeDe:97}, and
were then used in the overlapping case by
\cite{CaCaElWi:98}, and more recently in \cite{KiSa:07, KiSa:13} (with the latter
 explicitly using absorption) 
Perfectly matched layers on the subdomains were used in the overlapping case in \cite{To:98}, and in the non-overlapping case in \cite{ScZs:07}.

The optimal boundary conditions on the subdomains  involve the Dirichlet-to-Neumann operator \cite{EnZh:98}
and \emph{Optimised Schwarz methods}
then use various approximations of this operator to produce good, practical methods; 
see
\cite{DoGaGe:09, DoGaLaLePe:15, BoDoGaLa:12} for 
optimised Schwarz methods for Maxwell (and the latter about Maxwell with absorption).

One thing to emphasise from the analysis point of view is that all analysis of optimised Schwarz methods is essentially \emph{either} for the case of two half-planes, using the Fourier transform (see, e.g., \cite{GaMaNa:02}, \cite{DoGaGe:09}, \cite{GaZh:16a}), \emph{or} for the case of a circle embedded in the plane, using Fourier series (see, e.g., \cite{BoAnGe:12}). \igg{Furthermore, in the setting of frequency-domain wave propagation, 
there does not yet exist a framework for combining the analysis of optimised subdomain boundary conditions with coarse grid operators, or for giving a convergence theory explicit in subdomain or coarse-grid size.   }

\

\noindent\textbf{The search for good coarse spaces for wave problems.}
Whereas the design of good coarse spaces is relatively well-understood for homogeneous coercive self-adjoint problems, the design of practical coarse spaces for the Helmholtz and Maxwell equations is a largely open problem; the reason for this is that, as described above, conventional piece-wise polynomial coarse spaces require $\Hcs \sim k^{-2}$ for quasi-optimality of the coarse-space problem.
One approach to obtain practical coarse spaces is to use coarse spaces with oscillatory basis functions, such as
the plane-wave coarse spaces of Farhat and collaborators \cite{FaMaLe:00}, \cite{FaLeLePiRi:01}, and coarse spaces based on solutions of eigenproblems involving the Dirichlet-to-Neumann operator on the subdomain interfaces \cite{CoDoKrNa:14} (and thus applicable also in the case of variable wavenumber). 
\igg{This paper considers a second approach, initiated in \cite{GrSpVa:17} for the Helmholtz equation, 
namely applying conventional piecewise-polynomial coarse spaces to the problem with absorption added; these two approaches were compared in the Helmholtz case in \cite{BoDoGrSpTo:17}.}

\section{The variational formulation, Galerkin method, and other preliminary results}
\label{sec:Var} 

\subsection{Variational formulation}\label{sec:var1}

Let $\Omega$ be a bounded, simply connected Lipschitz domain in 
$\mathbb{R}^3$; 
the vast majority of our results will be for the particular case that 
$\Omega$ is a Lipschitz polyhedron, but we indicate below when we make this assumption.
Let
\beqs
\HcO := \big\{ \bv \in {\bf L}^2(\Omega) : \curl \bv \in {\bf L}^2(\Omega)\big\}.
\eeqs
Let $\bn$ denote the outward-pointing unit normal vector to $\Omega$ (which is defined almost everywhere on $\po$), and recall that the tangential trace $\bv\times\bn$ is well-defined for $\bv \in \HcO$ (see, e.g, \cite[Theorem 3.29]{Mo:03}). 

The theory in this paper concerns the PDE \eqref{eq:2} in $\Omega$ with the PEC boundary condition $\bE \times \bn=\bze$ on $\po$. For this theory, we therefore 
work in the space $\HocO$ defined by 
\beqs
\HocO := \{ \bv \in {\bf L}^2(\Omega) : \curl \bv \in {\bf L}^2(\Omega), \bv\times \bn=\bze\}.
\eeqs
We define the weighted inner product and norm on $\HocO$ by 
\beq\label{eq:weight}
(\bv,\bw)_{\weight}  =  (\curl \bv, \curl \bw)_{{\bf L}^2(\Omega)} + \wn^2 (\bv,\bw)_{{\bf L}^2(\Omega)} \quad  \text{and}  \quad 
\N{ \bv}_{\weight} = (\bv,\bv)_{\weight}^{1/2}.  
\eeq
The standard variational formulation of the BVP 
\begin{equation}
\curl\left(
\curl\bE\right) - (\wn^2 + \ri \abs) \bE = \bF \,\,\tin \Omega,
\qquad \bE \times \bn = \bze \,\,\ton \po
\label{eq:BVP}
\eeq
is: 
given $\bF \in {\bf L}^2(\Omega)$, $\abs\in \mathbb{R}$ and $\wn>0$, 
\beq\label{eq:vf_intro}
\text{find}\,\, \bE \in \HocO\,\, \text{ such that } \,\,a_\abs(\bE,\bv) = F(\bv) \,\,\text{ for all }\,  \bv \in \HocO,
\eeq
where 
\beq\label{eq:Helmholtzvf_intro}
a_\abs(\bE,\bv) := \int_\Omega 
\curl  \bE\cdot \overline{\curl \bv}  - (\wn^2 + \ri \abs)\int_\Omega  \bE\cdot\bvb
\quad \tand\quad 
F(\bv) := \int_\Omega \bF\cdot\bvb.
\eeq
When $\abs = 0$ and the PDE is \eqref{eq:1},
we simply write 
$a(\cdot,\cdot)$ instead of $a_\abs(\cdot,\cdot)$. 
When $|\abs|>0$, the solution of \eqref{eq:vf_intro} exists for all $\wn>0$ by the continuity and coercivity results in \S\ref{sec:prop} below. When $\abs=0$, the solution of \eqref{eq:vf_intro} exists for all but a countable set of $\wn$; see, e.g, \cite[Corollary 4.19]{Mo:03}.

The adjoint of the sesquilinear form $a_\abs(\cdot,\cdot)$, denoted by $a_\abs^*(\cdot,\cdot)$ is defined by 
\beq\label{eq:adj_ses}
a_\abs^*(\bE,\bv)=\overline{a_\abs(\bv,\bE)};
\eeq
see, e.g., \cite[Equation 2.27]{SaSc:11}. Thus, $a_\abs^*(\cdot,\cdot)$ is given by
\beq\label{eq:adjoint_form}
a^*_\abs(\bE,\bv) := \int_\Omega 
\curl  \bE\cdot \overline{\curl \bv}  - (\wn^2 - \ri \abs)\int_\Omega \bE\cdot\bvb,
\eeq
and one can check that the variational problem \eqref{eq:vf_intro} with $a_\abs(\cdot,\cdot)$ replaced by $a_\abs^*(\cdot,\cdot)$ is equivalent to the BVP
\begin{equation}
\curl\left(
\curl\bE\right) - (\wn^2 - \ri \abs) \bE = \bF \,\,\tin \Omega, \qquad
\bE \times \bn = \bze \,\,\ton \po; \label{eq:BVPadj}
\eeq
we refer to this BVP as the \emph{adjoint BVP}.

Finally we note that the BVPs \eqref{eq:BVP} and \eqref{eq:BVPadj} are usually posed with the source term $\bF$ in $\Hdivo$, where
\beqs
\Hdivo:= \big\{ \bu \in {\bf L}^2(\Omega) : \dive \bu =0\big\}.
\eeqs
The results in this paper concern the matrix arising from the Galerkin discretisation of the variational problem \eqref{eq:vf_intro}; these results are therefore independent of $\bF$ in \eqref{eq:BVP}/\eqref{eq:Helmholtzvf_intro} and so we make no assumption on $\bF$ in  in \eqref{eq:BVP}/\eqref{eq:Helmholtzvf_intro} other than that it is in $\bLtO$.

\bre\textbf{\emph{(Alternative convention for adding absorption)}}
Here we have chosen to add absorption in the form \eqref{eq:2}; our results can easily be translated into the case when one preconditions discretisations of the PDE \eqref{eq:1} with discretisations of 
$\curl(
\curl \bE) - (\wn+ \ri \eta
)^2 \bE = \bF$
for a different absorption parameter $\eta\geq0$. 
\ere

\subsection{Discretisation of the variational problem by edge finite elements}\label{sec:dis}

With $\Omega$ a Lipschitz polyhedron, let 
 $\cT^h$ be  a family of
conforming tetrahedral meshes 
that are shape-regular as the  
mesh diameter  $h \rightarrow 0$.  
A typical element of $\cT^h$ is  denoted $\tau \in \cT^h$ (a closed subset of
$\overline{\Omega}$). 
We define our approximation space $\Ned_h\subset \HocO$ as the N\'ed\'elec curl-conforming finite element space, of some fixed order $m$, on the mesh $\mathcal{T}^h$ with functions whose tangential trace is zero on $\po$; see \cite{Ne:80}, \cite[Chapter 5]{Mo:03}.
N\'ed\'elec curl-conforming finite elements are often termed {edge elements} because at the lowest order basis functions and degrees of freedom are associated with the edges of the mesh.
At higher order the geometrical identification of basis functions and degrees of freedom is more complicated.
Here, as degrees of freedom, we adopt suitable integrals on edges, faces and volumes of the mesh (see \cite[Definition 6, Propositions 3 and 4]{BoRa:17}).
Let $\cI^h$ be a set of indices for the degrees of freedom defined on the tetrahedra of $\mathcal{T}^h$.
Note that in order to define the coefficients of the interpolation operator onto $\Ned_h$ as the degrees of freedom applied to the function to be interpolated, the following duality property between basis functions and degrees of freedom needs to be satisfied:
\[
\psi_i (\bw_j) = \delta_{ij}, \quad i,j \in \cI^h,
\]  
where $\psi_i$ is the $i$-th degree of freedom and $\bw_j$ is the $j$-th basis function. Since this property is not automatically granted for high-order edge elements, we use the technique introduced in \cite{BoRa:17} to restore it.

The Galerkin method applied to the variational problem \eqref{eq:vf_intro} is
\beqs
\text{find}\,\, \bE_h \in \Ned_h\,\, \text{ such that } \,\,a_\abs(\bE_h,\bv_h) = F(\bv_h) \,\,\text{ for all }\,  \bv_h \in \Ned_h.
\eeqs
The Galerkin matrix $A_\abs$ is defined by
\beqs
(A_\abs)_{i j} := a_\abs(\bw_{j}, \bw_{i}), \quad i,j \in \cI^h,
\eeqs
and finding the Galerkin solution is then equivalent to solving the linear system 
$A_\abs \mathbf{u} =\mathbf{f}$,
where $f_i := F(\bw_{i})$, $i \in \cI^h$. 

\subsection{Properties of the sesquilinear form $a_\abs(\cdot,\cdot)$}\label{sec:prop}

In this section we briefly provide the key theoretical properties of the
sesquilinear form defined in \eqref{eq:Helmholtzvf_intro}. This
form depends on both  parameters $\abs$ and $\wn$, but only the first of these is reflected in the notation.  We will assume throughout that
\begin{equation} \label{eq:alpha}
\vert \abs\vert  \lesssim  \wn^2.
\end{equation} 
Throughout the paper we use the notation $A\lesssim B$ (equivalently $B \gtrsim A$) when $A/B$ is bounded above by a constant independent of $\wn$, $\abs$, and mesh diameters $h, \Hsub, \Hcs$ (the latter two introduced below).  We write $A \sim B$ when  $A\lesssim B$ and  $B \lesssim  A$. 

\ble\textbf{\emph{(Continuity of $a_\abs(\cdot,\cdot)$ and $a^*_\abs(\cdot,\cdot)$)}}\label{lem:cont}
\beq
\max\Big\{ |a_{\abs}(\bv,\bw)|, |a^*_{\abs}(\bv,\bw)|\Big\} \lesssim \Ccont \N{\bv}_{\weight} \N{\bw}_{\weight}
\label{eq:cont} 
\eeq
for all $\wn>0$ and $\bv, \bw \in \HocO$. 
\end{lemma}

\bpf
This follows from the Cauchy-Schwarz inequality.
\epf

Before stating the next result, about the coercivity of $a_\abs(\cdot,\cdot)$, we need to define $\sqrt{\wn^2 + \ri \abs}$.
When doing this,  we need to consider both positive and negative $\abs$ since, whichever choice we make for the problem \eqref{eq:BVP}, the other forms the adjoint problem, and we need estimates on the solutions and sesquilinear forms for both problems (in particular, this is essential for analysing both left and right preconditioning - see Theorem \ref{thm:final3} and Theorem \ref{cor:final4} below).

\begin{definition}\label{def:z}
$z(\wn,\abs):= \sqrt{\wn^2 + \ri \abs}$ where the square root is defined with the branch cut on the positive real axis. 
\end{definition}

\noi Note that this definition implies that, when $\abs\neq 0$,
\beq
\label{eq:propsqrt}
\Im(z) > 0, \quad \mathrm{sign}(\abs) \Re(z)>  0, \quad \text{ and }\quad z(\wn,-\abs) = - \overline{z(\wn,\abs)}.
\end{equation}

\begin{lemma}{\textbf{\emph{(\cite[Proposition 2.3]{GrSpVa:17})}}}\label{prop:elem}
With $z(\wn,\abs)$ defined above, for all $\wn>0$,
\begin{equation}\label{eq:ests4lemma}
\vert z \vert \sim \wn \quad \text{and} \quad  \frac{\Im(z)}{\vert z\vert} \sim \frac{\vert \abs\vert }{\wn^2}.
\end{equation}
\end{lemma}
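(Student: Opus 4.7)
The plan is to write $z = a + \ri b$ with $a,b\in\R$, exploit the defining identity $z^2 = \wn^2 + \ri\abs$ to get closed-form expressions for $a^2$ and $b^2$, and then use the hypothesis $|\abs|\lesssim \wn^2$ (equation \eqref{eq:alpha}) to extract the claimed scalings. The branch-cut choice in Definition \ref{def:z} plus \eqref{eq:propsqrt} gives us sign information for $a$ and $b$, but for the magnitude estimates the signs are irrelevant, so I would not need to treat the cases $\abs>0$ and $\abs<0$ separately.

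For the first estimate, $|z|\sim \wn$, I would simply note that
\[
|z|^2 \,=\, |\wn^2 + \ri\abs| \,=\, \sqrt{\wn^4 + \abs^2}.
\]
The lower bound $|z|^2 \geq \wn^2$ is immediate, and the assumption $|\abs|\lesssim \wn^2$ gives $\sqrt{\wn^4+\abs^2}\lesssim \wn^2$, so $|z|^2 \sim \wn^2$, hence $|z|\sim \wn$.

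For the second estimate, $\Im(z)/|z|\sim |\abs|/\wn^2$, I would equate real and imaginary parts of $z^2 = \wn^2 + \ri\abs$, giving $a^2 - b^2 = \wn^2$ and $2ab = \abs$, and combine with $a^2+b^2 = |z|^2 = \sqrt{\wn^4+\abs^2}$ to obtain
\[
2 b^2 \,=\, \sqrt{\wn^4+\abs^2} - \wn^2 \,=\, \frac{\abs^2}{\sqrt{\wn^4+\abs^2}+\wn^2}.
\]
Using $|\abs|\lesssim \wn^2$ again, the denominator is $\sim \wn^2$, so $b^2 \sim \abs^2/\wn^2$, i.e.\ $|b|\sim |\abs|/\wn$. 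Since $\Im(z) = b$ and $\Im(z)>0$ by \eqref{eq:propsqrt} (so $|b|=b$), dividing by $|z|\sim \wn$ gives $\Im(z)/|z|\sim |\abs|/\wn^2$, as required.

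There is no real obstacle: the whole argument is elementary manipulation of the explicit formula for the principal square root of a complex number, with the sole subtlety being the standard ``rationalisation'' trick $\sqrt{\wn^4+\abs^2}-\wn^2 = \abs^2/(\sqrt{\wn^4+\abs^2}+\wn^2)$ needed to see the correct scaling of $b^2$ in the regime $|\abs|\ll \wn^2$. The assumption \eqref{eq:alpha} is used twice, once to bound $\sqrt{\wn^4+\abs^2}$ from above by $\wn^2$ in each of the two estimates.
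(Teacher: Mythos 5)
Your proof is correct. Note that the paper does not actually supply a proof of this lemma; it simply cites \cite[Proposition 2.3]{GrSpVa:17}, so there is no in-paper argument to compare against. Your reconstruction via $|z|^2 = \sqrt{\wn^4+\abs^2}$, the real/imaginary-part identities $a^2-b^2=\wn^2$, $2ab=\abs$, and the rationalisation $\sqrt{\wn^4+\abs^2}-\wn^2 = \abs^2/(\sqrt{\wn^4+\abs^2}+\wn^2)$ is the standard elementary derivation, and your use of the standing assumption \eqref{eq:alpha} at exactly the two points where it is needed is correct. The only thing worth flagging (and you do flag it implicitly by appealing to \eqref{eq:propsqrt}) is that the sign information $\Im(z)>0$ is what lets you replace $|b|$ by $b=\Im(z)$ in the final step; without the branch-cut choice this would only give $|\Im(z)|/|z|\sim|\abs|/\wn^2$.
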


\begin{lemma}\textbf{\emph{(Coercivity of $a_\abs(\cdot,\cdot)$ and $a^*_\abs(\cdot,\cdot)$)}}\label{lem:coer}
There exists
a constant $\rho>0$ independent of $\wn$ and $\abs$ such that 
\beq\label{eq:coercivity}
|a_{\abs}(\bv,\bv)|=|a^*_{\abs}(\bv,\bv)|
 \geq \ \Im\left( \Theta a_\abs(\bv,\bv) \right)\ \geq \
\rho\,\frac{\vert \abs\vert }{\wn^2}\,  \N{\bv}_{\weight}^2
\eeq
for all $\wn>0$ and $v\in \HocO$,  where $\Theta = -\overline{z}/\vert z \vert$.\end{lemma}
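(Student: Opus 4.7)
The plan is to establish the three claims of \eqref{eq:coercivity} separately, with the middle step being the main computational content.

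First, the equality $|a_{\abs}(\bv,\bv)| = |a^*_{\abs}(\bv,\bv)|$ is immediate from the adjoint relation \eqref{eq:adj_ses}: $a^*_\abs(\bv,\bv) = \overline{a_\abs(\bv,\bv)}$, and complex conjugation preserves modulus. Next, the inequality $|a_\abs(\bv,\bv)| \geq \Im(\Theta a_\abs(\bv,\bv))$ is a one-line observation: by Definition \ref{def:z} we have $|\Theta| = |\overline{z}|/|z| = 1$, so $|\Theta a_\abs(\bv,\bv)| = |a_\abs(\bv,\bv)|$, and the imaginary part of any complex number is bounded above by its modulus.

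The main step is the final inequality. The key algebraic observation is that $\wn^2 + \ri\abs = z^2$ by Definition \ref{def:z}, so that $\overline{z}(\wn^2 + \ri\abs) = \overline{z}\cdot z^2 = |z|^2 z$. Writing $X := \|\curl\bv\|_{\bLtO}^2$ and $Y := \|\bv\|_{\bLtO}^2$, I compute
\beqs
\Theta\, a_\abs(\bv,\bv) \;=\; -\frac{\overline{z}}{|z|}\bigl(X - z^2 Y\bigr) \;=\; -\frac{\overline{z}}{|z|}\,X \;+\; |z|\, z\, Y.
\eeqs
Taking imaginary parts and using $\Im(\overline{z}) = -\Im(z)$ yields
\beqs
\Im\bigl(\Theta\, a_\abs(\bv,\bv)\bigr) \;=\; \frac{\Im(z)}{|z|}\,X \;+\; |z|\,\Im(z)\,Y \;=\; \frac{\Im(z)}{|z|}\Bigl(X + |z|^2 Y\Bigr).
\eeqs

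Finally I invoke Lemma \ref{prop:elem}: it gives $\Im(z)/|z| \sim |\abs|/\wn^2$ and $|z|^2 \sim \wn^2$ (uniformly in $\wn$ under the assumption $|\abs|\lesssim \wn^2$). Substituting and recalling the definition of the weighted norm \eqref{eq:weight},
\beqs
\Im\bigl(\Theta\, a_\abs(\bv,\bv)\bigr) \;\gtrsim\; \frac{|\abs|}{\wn^2}\bigl(\|\curl\bv\|_{\bLtO}^2 + \wn^2\|\bv\|_{\bLtO}^2\bigr) \;=\; \frac{|\abs|}{\wn^2}\,\N{\bv}_{\weight}^2,
\eeqs
which delivers the required constant $\rho > 0$. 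I do not expect any genuine obstacle here; the whole argument is essentially the identification $\wn^2 + \ri\abs = z^2$ followed by the asymptotics of Lemma \ref{prop:elem}. The one thing to be careful about is that the constant $\rho$ depends only on the implicit constants in Lemma \ref{prop:elem} and on the ratio $|z|^2/\wn^2$, all of which are independent of $\wn$ and $\abs$ within the regime \eqref{eq:alpha}.
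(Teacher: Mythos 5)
Your proof is correct and follows essentially the same route as the paper's: both reduce to the identity $\Im(\Theta\, a_\abs(\bv,\bv)) = \frac{\Im(z)}{|z|}\bigl(\|\curl\bv\|_{\bLtO}^2 + |z|^2\|\bv\|_{\bLtO}^2\bigr)$ and then invoke the asymptotics of Lemma \ref{prop:elem} (plus the exact bound $|z|^2\geq\wn^2$). The only cosmetic difference is that you manipulate $z$ and $\overline z$ directly via $\overline z\, z^2 = |z|^2 z$, whereas the paper writes $z=p+\ri q$ and works componentwise; the final formula is identical.
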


\bre\textbf{\emph{(Discussion of coercivity of $a_\abs(\cdot,\cdot)$)}}
We make two remarks.

\noi

\ben
\item Once one has established that 
\beq\label{eq:coer1a}
| a_{\abs}(\bv,\bv)|\geq \ \rho \, \frac{\vert \abs\vert }{\wn^2}\,  \N{\bv}_{\weight}^2
\eeq
for all $v\in \HocO$, then the existence of a $\Theta(\wn,\abs)$ with $|\Theta(\wn,\abs)|=1$ such that 
\beq\label{eq:coer2}
\Im\left( \Theta(\wn,\abs) a_\abs(\bv,\bv) \right)\ \geq \ \rho \, \frac{\vert \abs\vert }{\wn^2}\,  \N{\bv}_{\weight}^2
\eeq
for all $v\in \HocO$ follows from the convexity of the numerical range (see, e.g., \cite[Theorem 1.1-2]{GuRa:97}). Nevertheless, our proof of Lemma \ref{lem:coer} establishes \eqref{eq:coer2} directly and obtains \eqref{eq:coer1a} as a consequence.

\item The coercivity of $a_{\abs}(\cdot,\cdot)$ is discussed in the literature (e.g.~in \cite[Page 244]{Mo:92}, \cite[Page 1]{GoPa:03}) and an explicit statement 
for the heterogeneous interior impedance problem
appeared recently in \cite[Theorem 2.2]{KiUrZe:16}. The analogous result for the homogeneous Helmholtz Dirichlet problem appears in abstract form in \cite[\S2.4]{Le:85} and the result 
for the homogeneous Helmholtz interior impedance problem appears in \cite[Lemma 2.4]{GrSpVa:17} in a form similar to Lemma \ref{lem:coer}.
\een
\ere

\begin{proof}[Proof of Lemma \ref{lem:coer}]
With $z$ defined by Definition \eqref{def:z}, writing $z = p+iq$ and using 
the definition of $a_\abs(\cdot,\cdot)$,  we have 
\beqs
 a_{\abs}(\bv,\bv) =\big\|\curl \bv \big\|_{{\bf L}^2(\Omega)}^2 - (p+\ri q )^2 \big\|\bv \big\|_
{{\bf L}^2(\Omega)}^2 .
\eeqs
Therefore
\begin{equation*}
\Im \left[ -(p-\ri q)  a_{\abs}(\bv,\bv)\right] = q \big\|\curl \bv\big\|_{{\bf L}^2(\Omega)}^2 +  
q (p^2+ q^2) \big\|\bv \big\|_
{{\bf L}^2(\Omega)}^2.
\end{equation*}
Hence, dividing through by $\vert z \vert = \sqrt{p^2 + q^2} $ and setting 
$\Theta = - \overline{z} /\vert z \vert$,  we have 
\begin{equation*}
\Im \left[ \Theta   a_{\abs}(\bv,\bv)\right] \  =\  \frac{\Im(z)}{\vert z\vert} \left[ \big\|\curl\bv \big\|_{{\bf L}^2(\Omega)}^2 +  
 \vert z \vert^2  \big\|\bv \big\|_
{{\bf L}^2(\Omega)}^2\right].
\end{equation*}
The inequality \eqref{eq:coercivity} then follows from the two estimates in \eqref{eq:ests4lemma}. 

The result about the adjoint form $a^*_\abs(\cdot,\cdot)$ follows immediately after noticing that the third equation in \eqref{eq:propsqrt} implies that $\Im z(k,-\abs)= \Im z(\wn,\abs)$.
\end{proof} 

\begin{corollary}\textbf{\emph{(Bound on the solutions of \eqref{eq:BVP} and \eqref{eq:BVPadj} via Lax--Milgram)}}\label{cor:LM}
The solution of the variational problem \eqref{eq:vf_intro} exists, is unique, and satisfies the bound.
\beq\label{eq:LM}
\N{\bE}_{\weight} \lesssim  
\left(
\frac{\wn}{|\abs|}\right)
\big\|\bF\big\|_{{\bf L}^2(\Omega)}
\eeq
for all $\wn>0$.
The same is true if the sesquilinear form $a_\abs(\cdot,\cdot)$ in \eqref{eq:vf_intro} is replaced by $a^*_\abs(\cdot,\cdot)$ given by \eqref{eq:adjoint_form}.
\end{corollary}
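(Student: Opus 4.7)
The plan is to apply a standard Lax--Milgram argument, with the only mild twist being that the coercivity in Lemma~\ref{lem:coer} is stated in the rotated form $\Im(\Theta a_\abs(\bv,\bv)) \gtrsim (|\abs|/\wn^2)\|\bv\|_{\weight}^2$ rather than as a lower bound on $\Re a_\abs(\bv,\bv)$. I would first remove this cosmetic issue by introducing the rotated sesquilinear form $\tilde{a}_\abs(\bu,\bv) := -\ri\Theta\, a_\abs(\bu,\bv)$, so that
\begin{equation*}
\Re \tilde{a}_\abs(\bv,\bv) \;=\; \Im\bigl(\Theta\, a_\abs(\bv,\bv)\bigr) \;\geq\; \rho\,\frac{|\abs|}{\wn^2}\,\N{\bv}_{\weight}^2,
\end{equation*}
which is standard coercivity in the weighted norm. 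Continuity of $\tilde{a}_\abs(\cdot,\cdot)$ on $\HocO\times\HocO$ is inherited from Lemma~\ref{lem:cont} since $|{-\ri\Theta}|=1$.

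Next I would rewrite the variational problem \eqref{eq:vf_intro} equivalently as: find $\bE\in\HocO$ with $\tilde{a}_\abs(\bE,\bv) = \tilde F(\bv)$ for all $\bv\in\HocO$, where $\tilde F(\bv) := -\ri\Theta F(\bv)$. The functional $\tilde F$ is bounded on $(\HocO,\N{\cdot}_{\weight})$: by Cauchy--Schwarz and the elementary bound $\N{\bv}_{{\bf L}^2(\Omega)} \leq \wn^{-1}\N{\bv}_{\weight}$ (which follows directly from the definition \eqref{eq:weight}), one obtains
\begin{equation*}
|\tilde F(\bv)| \;=\; |F(\bv)| \;\leq\; \N{\bF}_{{\bf L}^2(\Omega)} \N{\bv}_{{\bf L}^2(\Omega)} \;\leq\; \frac{1}{\wn}\,\N{\bF}_{{\bf L}^2(\Omega)}\,\N{\bv}_{\weight}.
\end{equation*}
The (complex) Lax--Milgram theorem then yields a unique $\bE\in\HocO$ together with the bound
\begin{equation*}
\N{\bE}_{\weight} \;\leq\; \frac{1}{\rho\,|\abs|/\wn^2}\cdot\frac{\N{\bF}_{{\bf L}^2(\Omega)}}{\wn} \;\lesssim\; \frac{\wn}{|\abs|}\,\N{\bF}_{{\bf L}^2(\Omega)},
\end{equation*}
which is \eqref{eq:LM}.

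For the adjoint statement, exactly the same argument applies since Lemma~\ref{lem:cont} and Lemma~\ref{lem:coer} give identical continuity and coercivity bounds for $a^*_\abs(\cdot,\cdot)$ (with the same constant $\rho$, as noted at the end of the proof of Lemma~\ref{lem:coer} using the third identity in \eqref{eq:propsqrt}). There is no real obstacle here: the only thing to be careful about is keeping track of the powers of $\wn$ that arise from passing between $\N{\cdot}_{{\bf L}^2(\Omega)}$ and $\N{\cdot}_{\weight}$, which is precisely what produces the factor $\wn/|\abs|$ rather than the $1/|\abs|$ that a naive application of Lax--Milgram in the $\weight$-norm would suggest.
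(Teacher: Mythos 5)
Your proof is correct and follows essentially the same route as the paper: Lax--Milgram using the continuity of Lemma~\ref{lem:cont} and coercivity of Lemma~\ref{lem:coer}, with the factor $\wn/|\abs|$ coming from the extra $\wn^{-1}$ in bounding the dual norm of $F$. The only cosmetic difference is that you explicitly rotate by $-\ri\Theta$ to get a real-part coercivity condition, whereas the paper applies Lax--Milgram directly from the lower bound on $|a_\abs(\bv,\bv)|$; these are equivalent.
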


\bpf
The Lax--Milgram theorem, the continuity result of Lemma \ref{lem:cont}, and the coercivity result of Lemma \ref{lem:coer} imply that the solution of the variational problem \eqref{eq:vf_intro} satisfies
\beqs
\N{\bE}_{\weight} \lesssim 
\left(\frac{\wn^2}{|\abs|}\right)\N{F}_{(\curlt,\wn)'},
\eeqs
where $\|\cdot\|_{(\curlt,\wn)'}$ denotes the norm on the dual-space of $\HocO$ defined by 
\beqs
\N{F}_{(\curlt,\wn)'} := \sup_{\bv\in \HocO\setminus\{\bze\}}\frac{|F(\bv)|}{\N{\bv}_{\weight}}.
\eeqs
From the definition of $F(\cdot)$ in \eqref{eq:Helmholtzvf_intro},
\beqs
|F(\bv)| \leq \big\|\bF\big\|_{{\bf L}^2(\Omega)} \big\|\bv\big\|_{{\bf L}^2(\Omega)}\leq \frac{1}{\wn}\big\|\bF\big\|_{{\bf L}^2(\Omega)}\N{\bv}_{\weight}
\eeqs
and the inequality \eqref{eq:LM} follows. The result about the solution to the adjoint problem follows in a similar way.
\epf
\subsection{Regularity of the BVP and its adjoint}

In order to estimate the approximation properties of the coarse grid operator in Lemma \ref{lem:approx} below, we use a duality argument and need ${\bf H}^1$-regularity of both $\bE$ and $\curl \bE$.
We first formulate this regularity as an assumption (Assumption \ref{ass:reg}) and then show that this assumption is satisfied when $\Omega$ is \emph{either} $C^{1,1}$ \emph{or} a convex polyedron
(Lemma \ref{lem:reg1}). 

\begin{assumption}\textbf{\emph{($\wn$- and $\abs$-explicit ${\bf H}^1$ regularity)}}\label{ass:reg}
$\Omega$ is such that, given $\bF\in {\bf L}^2(\Omega)\cap \Hdivo$,
$\abs\in\mathbb{R}\setminus\{0\}$, and $\wn>0$, the solution $\bE$ of the BVP \eqref{eq:BVP} has  
$\curl \bE\in {\bf H}^1(\Omega)$ and $\bE \in {\bf H}^1(\Omega)$.
Moreover, if $\abs$ satisfies \eqref{eq:alpha} then, given $\wn_0>0$, 
\beq\label{eq:reg_result}
\N{\curl \bE}_{{\bf H}^1(\Omega)} + \wn\N{\bE}_{{\bf H}^1(\Omega)} \lesssim \Crego \wn\bigg( \big\|\curl \bE\big\|_{{\bf L}^2(\Omega)} + \wn\big\|\bE\big\|_{{\bf L}^2(\Omega)}\bigg) + \Cregt\N{\bF}_{{\bf L}^2(\Omega)},
\eeq
 for all $\wn\geq \wn_0$.
\end{assumption}

\noi Note that:
(i) since $\abs\in\mathbb{R}\setminus\{0\}$, Assumption \ref{ass:reg} concerns the solution of both the BVP \eqref{eq:BVP} and the adjoint BVP \eqref{eq:BVPadj}, and
(ii) the bound \eqref{eq:reg_result} can be viewed as a rigorous expression of the idea that taking a derivative of a solution of the PDE \eqref{eq:1}  incurs a power of $\wn$.

If Assumption \ref{ass:reg} holds, the bound \eqref{eq:LM} from the Lax--Milgram theorem immediately implies the following corollary.

\begin{corollary}
If Assumption \ref{ass:reg} holds, and $\bE$ is the solution to \emph{either} the BVP \eqref{eq:BVP} \emph{or} the adjoint BVP \eqref{eq:BVPadj} with 
$\bF \in {\bf L}^2(\Omega)\cap \Hdivo$, 
$\abs\in\mathbb{R}\setminus\{0\}$ satisfying \eqref{eq:alpha}, and $\wn>0$, then, given $\wn_0>0$,
\beq\label{eq:reg_result2}
\N{\curl\bE}_{{\bf H}^1(\Omega)} + \wn\N{\bE}_{{\bf H}^1(\Omega)} \lesssim \Creg \left(
\frac{\wn^2}{|\abs|}\right)
 \N{\bF}_{{\bf L}^2(\Omega)}, 
\eeq
for all $\wn\geq \wn_0$.
\end{corollary}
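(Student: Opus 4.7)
The plan is to combine Assumption \ref{ass:reg} with Corollary \ref{cor:LM} via a simple chain of estimates; this is essentially what the statement promises (``immediately implies''), so no new ingredients are needed.

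First, I would invoke Assumption \ref{ass:reg} to obtain
\[
\N{\curl \bE}_{{\bf H}^1(\Omega)} + \wn\N{\bE}_{{\bf H}^1(\Omega)} \lesssim \wn\big( \big\|\curl \bE\big\|_{{\bf L}^2(\Omega)} + \wn\big\|\bE\big\|_{{\bf L}^2(\Omega)}\big) + \N{\bF}_{{\bf L}^2(\Omega)},
\]
valid for the solution of \eqref{eq:BVP}; the same bound applies to the solution of the adjoint BVP \eqref{eq:BVPadj}, since the regularity statement in Assumption \ref{ass:reg} is symmetric in $\abs$ and $-\abs$ (this is implicit in the note after Assumption \ref{ass:reg}).

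Next, I would observe that by the definition of $\N{\cdot}_{\weight}$ in \eqref{eq:weight}, one has
\[
\big\|\curl \bE\big\|_{{\bf L}^2(\Omega)} + \wn\big\|\bE\big\|_{{\bf L}^2(\Omega)} \leq \sqrt{2}\,\N{\bE}_{\weight}.
\]
Applying Corollary \ref{cor:LM} (which holds for both the BVP \eqref{eq:BVP} and its adjoint \eqref{eq:BVPadj}) then yields
\[
\wn\big( \big\|\curl \bE\big\|_{{\bf L}^2(\Omega)} + \wn\big\|\bE\big\|_{{\bf L}^2(\Omega)}\big) \lesssim \wn \cdot \frac{\wn}{|\abs|} \N{\bF}_{{\bf L}^2(\Omega)} = \frac{\wn^2}{|\abs|}\N{\bF}_{{\bf L}^2(\Omega)}.
\]

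Finally, to absorb the remaining $\N{\bF}_{{\bf L}^2(\Omega)}$ term, I would use the hypothesis \eqref{eq:alpha}, namely $|\abs|\lesssim \wn^2$, which gives $\wn^2/|\abs|\gtrsim 1$ and therefore $\N{\bF}_{{\bf L}^2(\Omega)} \lesssim (\wn^2/|\abs|)\N{\bF}_{{\bf L}^2(\Omega)}$. Summing the two contributions yields \eqref{eq:reg_result2}. There is no serious obstacle here: the argument is purely a chaining of Assumption \ref{ass:reg}, Corollary \ref{cor:LM}, and the hypothesis \eqref{eq:alpha}; the only point requiring a moment's care is noting that both the forward and adjoint problems fall under the scope of Assumption \ref{ass:reg} and Corollary \ref{cor:LM}.
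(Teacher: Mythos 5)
Your proof is correct and matches the paper's intended (but unstated) argument: the paper simply asserts the corollary follows ``immediately'' from Assumption \ref{ass:reg} together with the Lax--Milgram bound \eqref{eq:LM}, and the chain you spell out --- regularity estimate, then $\|\curl\bE\|_{\bLtO}+\wn\|\bE\|_{\bLtO}\lesssim\N{\bE}_{\weight}$, then \eqref{eq:LM}, then absorbing $\N{\bF}_{\bLtO}$ via $\wn^2/|\abs|\gtrsim 1$ from \eqref{eq:alpha} --- is exactly that immediate implication made explicit.
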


We now describe two situations in which Assumption \ref{ass:reg} holds. 

\ble
\label{lem:reg1}
If $\Omega$ is \emph{either} a bounded $C^{1,1}$ domain \emph{or} a convex polyhedron, 
then Assumption \ref{ass:reg} holds. 
\ele

Since the proof of Lemma \ref{lem:reg1} is quite long and involved, we relegate it to Appendix \ref{app:2}.

\bre\textbf{\emph{(Relation of Lemma \ref{lem:reg1} to other results in the literature)}}
The analogue of Lemma \ref{lem:reg1} for the interior impedance problem with $\abs=0$  
is contained in 
\cite[Lemma 5.2.2 and Remark 5.5.8]{Mo:11},
with an analogous result for a bounded Lipschitz polyhedron (with less regularity of $\bE$) contained in \cite[Theorem 5.5.5]{Mo:11}. 
A similar result to Lemma \ref{lem:reg1} for the PEC case with $\abs=0$ is contained in \cite[Theorem 4.5.3. Page 162]{CoDaNi:10}; this result proves that if
 $\po$ is $C^2$, $\bF \in {\bf L}^2(\Omega)$, and $\dive \bF \in H^1(\Omega)$, then $\curl \bE\in {\bf H}^1(\Omega)$ and $\bE \in {\bf H}^2(\Omega)$. 
\es{Observe that in this last result, the condition $\dive\bF =0$ is replaced by a regularity condition on $\dive \bF$. We see a similar feature in our proof of Lemma \ref{lem:reg1}; indeed, the proof shows that Lemma \ref{lem:reg1} actually holds with $\dive\bF =0$ replaced by $\dive\bF \in {\bf L}^2(\Omega)$ (and then with 
a $\|\dive\bF\|_{{\bf L}^2(\Omega)}$ term on the right-hand side of \eqref{eq:reg_result} -- see \eqref{eq:reg_result_app}).
}

\ere

\section{Domain decomposition} 

\label{sec:DD}

\subsection{Definition and properties of the subdomains}\label{sec:DD1}

To define appropriate subspaces of the edge finite-element space
$\Ned_h\subset \HocO$,  
we start with a collection of open subsets    $\{ \tOmega_\ell: \ell =
1, \ldots, N\}$ of $\R^d$  that form an overlapping cover of $\overline{\Omega}$, 
and we set $\Omega_\ell =
\tOmega_\ell \cap \overline{\Omega}$. 
Each $\overline{\Omega}_\ell$  is assumed to be non-empty 
and to  consist of a union of
elements of the mesh $\cT_h$. Then, for each $\ell = 1, \ldots, N$,  we set 
\beq\label{eq:Vell}
\Ned^\ell_h := \Ned_h \cap {\bf H}_0(\curlt; \Omega_\ell);
\eeq
i.e.~the tangential 
traces of elements of $\Ned^\ell_h$ vanish on the
internal boundary $\partial \Omega_\ell \backslash \po$ (as well as on $\partial \Omega_\ell \cap \po$).
In writing the definition of $\Ned^\ell_h$, we are using the fact that ${\bf H}_0(\curlt;\Omega_\ell)$ can be considered as a subset of $\HocO$ by extending functions in ${\bf H}_0(\curlt;\Omega_\ell)$ by zero (such extensions are in $\HocO$ by, e.g., \cite[Lemma 5.3]{Mo:03}).

Let $\cI^h(\Omega_{\ell}) \subset \cI^h$ be the set of indices of the degrees of freedom whose support (edge, face or volume) is contained in $\Omega_{\ell}$. 
We then have that $\cI^h= \bigcup_{\ell=1}^N \cI^h(\Omega_\ell)$. 
For $i\in \cI^h({\Omega_\ell})$ and $j \in \cI^h$, we define the restriction matrices  
\beq\label{eq:restriction1}
(R^\ell)_{ij} := \delta_{ij}.
\eeq
We make the following assumptions on the subdomains:
\ben
\item \emph{Shape regularity:} the subdomains are shape-regular
Lipschitz polyhedra of diameter $H_\ell$, in the sense that the volume is of order $ H_\ell^3$ and surface
  area of order 
  $ H_\ell^{2}$ (with omitted constants independent of all parameters). We then let $\Hsub:= \max_\ell H_\ell$.
\item \emph{Uniform overlap of order $\delta$:} For each $\ell = 1, \ldots , N$, let $\mathring{\Omega}_\ell$ denote the
part of $\Omega_\ell$ that is not overlapped by any other subdomains,
and for $\mu>0$ let $\Omega_{\ell, \mu}$ denote the set of points in
$\Omega_\ell$ that are a distance no more than $\mu$ from the
boundary $\partial \Omega_\ell$. Then we assume that for some
$\delta>0$ and some $0<c<1$ fixed, 
\beqs
\Omega_{\ell, c \delta}\subset \Omega_\ell\backslash
  \mathring{\Omega}_\ell \subset \Omega_{\ell, 
\delta};
\eeqs
the case $\delta \sim  H_\ell$ is called \emph{generous overlap}.
\item {\em Finite
  overlap assumption:} as $h, \Hsub\tendo$,
\begin{equation} 
\#
  \Lambda(\ell) \lesssim 1, \quad \text{where} \quad   
\Lambda(\ell) = \big\{ \ell' :  \Omega_\ell \cap \Omega_{\ell'} \not =
\emptyset \big\}.   \label{eq:finoverlap}\end{equation}       
\een

\subsection{Definition of the coarse space}\label{sec:DD2}
Let  $\{\cT^{H}\}$ be  a sequence of shape-regular, tetrahedral meshes on 
$\overline{\Omega}$, with mesh diameter $\Hcs$. We assume that each element of $\cT^H$ consists of the union of a set of fine grid elements.
Let $\cI^H$ be a set of indices for the degrees of freedom defined on the tetrahedra of the coarse mesh $\mathcal{T}^H$.
The coarse basis functions $\{{\bf w}^{H}_p\}_{p \in \cI^H}$ are taken to be the curl-conforming basis functions on $\cT^H$ with zero tangential traces on $\po$;
importantly, we allow the coarse space basis functions to be a different order than the fine-grid basis functions in $\Ned_h$.
We define the coarse finite element space
$\Ned_H :=  \mathrm{span} \{{\bf w}^H_{p}  : p \in \cI^H
\} \subset \HocO$,
and we define the ``restriction'' matrix
\begin{equation}\label{eq:restriction}
(R^0)_{pj} := \psi^h_j(\bw^H_{p}),  \quad p \in \cI^{H}, \quad j \in \cI^h,
\end{equation}
where $\psi^h_j$ are the degrees of freedom on the fine mesh.
Note that $(R^0)^T$ is the interpolation matrix from $\Ned_H$ onto $\Ned_h$: its entries $((R^0)^T)_{jp} = (R^0)_{pj} = \psi^h_j(\bw^H_{p})$ are integrals (on edges, faces or volumes) which should be computed with sufficiently accurate quadrature formulas; we return to this in \S\ref{sec:num}.

\subsection{Two-level Additive Schwarz preconditioners}\label{sec:3.3}
With the restriction matrices $(R^\ell)_{\ell=0}^{N}$ defined by \eqref{eq:restriction1} and \eqref{eq:restriction} above, 
we define
 \beq\label{eq:minor}
 A_{\abs}^\ell \ := \ R^\ell  A_\abs (R^\ell)^T.
 \eeq
 For $\ell=1,\ldots, N$, the matrix $A_{\abs}^\ell$ 
is then  just the minor  of 
$A_\abs$ corresponding to rows
and columns taken from $\cI^h({\Omega_\ell})$ and the matrix $A_{\abs}^0$ is the Galerkin matrix for the variational problem \eqref{eq:vf_intro} 
discretised in $\Ned_H$. 
The matrix $A_{\abs}^\ell$ is therefore the Galerkin matrix corresponding to solving the Maxwell problem \eqref{eq:vf_intro}  in the domain $\Omega_\ell$ with PEC boundary conditions on $\partial \Omega_\ell$.
The coercivity 
result Lemma \ref{lem:coer} implies that the matrices $\matrixA_{\abs}^\ell$, $\ell=0,\ldots, N$, are invertible for all mesh 
sizes $h$ and all choices of  $\abs \not = 0$.
Indeed,  if  
$\matrixAepszero \bV
= \mathbf{0}$, where $\bV$ is a vector such that $\bv_H:= \sum_{p \in \cI^H}V_p \bw_{p}^H \in \Ned_H$,
then
$0 \ = \ \bV^*\matrixAepszero \bV \ = \ a_\abs(\bv_H, \bv_H) $.
Therefore,
$$0 \ = \ \vert a_\abs(\bv_H, \bv_H) \vert  \ \geq \ \rho \,\frac{\vert \abs\vert }{\wn^2}\, \Vert \bv_H\Vert_{\weight}^2, $$
which immediately implies $\bv_H = \bfzero$, and thus $\bV =
\bfzero$. Similar arguments apply to $\matrixAepsell$ and to the
adjoints $(A_{\abs}^\ell)^*$, $\ell = 0, \ldots, N$.  
The theory in this paper considers the classical
two-level \emph{Additive Schwarz} preconditioner for $A_\abs$ defined by
\beq\label{eq:defAS} 
\matrixB_{\abs, \text{AS}}^{-1}: =  \sum_{\ell=0}^N (R^\ell)^T (A_{\abs}^\ell)^{-1} R^\ell.
\eeq

\subsection{Discrete Helmholtz decomposition of $\Ned_h$ and associated results}\label{sec:orthog}
Recall that $\Hdiv$ is defined by
\beqs
\Hdiv:= \big\{ \bv \in {\bf L}^2(\Omega) : \dive \bv \in L^2(\Omega)\big\}
\eeqs
and $\Hdivoo$ by 
\beqs
\Hdivoo:= \big\{ \bv \in {\bf L}^2(\Omega) : \dive \bv \in L^2(\Omega) \, \tand \, \bv\cdot \bn =0 \ton \partial \Omega\big\}
\eeqs
(recall that the normal trace $\bv\cdot\bn$ is well-defined on $\Hdiv$ by, e.g., \cite[Thereom 3.24]{Mo:03}).

Let $\RT_h$ denote the Raviart-Thomas finite element subspaces of $\Hdivoo$
of index $m$ based on the fine mesh $\cT^h$.
Let $W_h$ denote the subspace of $H_0^1(\Omega)$ consisting of continuous piecewise polynomials of degree $m+1$, also on the fine mesh $\cT^h$.
We then have the discrete Helmholtz decomposition 
\beq\label{eq:orthog}
\Ned_h = \curl_h \RT_h \oplus \grad W_h,
\eeq
see, e.g., \cite[\S2, in particular Remark 2.1]{ArFaWi:00}
\cite[\S7.2.1, in particular Lemma 7.4]{Mo:03} \cite[Equation 2.3]{GoPa:03}, where $\curl_h$ is the ${\bf L}^2$-adjoint of the map $\curl : \Ned_h \rightarrow \RT_h$, i.e.~
\beqs
\big( \curl_h \bv_h, \bq_h\big)_{{\bf L}^2(\Omega)} = \big(  \bv_h, \curl \bq_h\big)_{{\bf L}^2(\Omega)} \quad\tfor \bq_h \in \bQ_h \tand \bv_h \in \bV_h,
\eeqs
and the decomposition \eqref{eq:orthog} is orthogonal both in $\bLtO$ and in $\HcO$.

We define $\RT_H$ and $W_H$ in the same way as $\RT_h$ and $W_h$, but using the coarse mesh $\{\cT^{H}\}$. We also set
\beq\label{eq:Whell}
\RT^\ell_h := \RT_h \cap {\bf H}_0(\dive; \Omega^\ell) \quad\tand\quad W^\ell_h := W_h \cap H_0^1(\Omega^\ell)\quad\tfor \ell=1,\ldots,N,
\eeq
where fields on $\Omega^\ell$ are identified as fields on $\Omega$ via extending them by zero.
We then have the analogue of the decomposition \eqref{eq:orthog}:
\beq\label{eq:orthog2}
\Ned_h^\ell = \curl_h^\ell \RT_h^\ell \oplus \grad W_h^\ell,\quad\ell=1,\ldots,N,
\eeq
where $\curl_h^\ell$ is the ${\bf L}^2$-adjoint of the map $\curlt: \Ned_h^\ell \rightarrow \RT_h^\ell$.

For fields in $\curlt_h^\ell \RT_h^\ell$ we have the following Poincar\'e--Friedrichs inequality from \cite[Lemma 4.1]{GoPa:03}.

\ble\emph{\textbf{(Poincar\'e--Friedrichs type-inequality \cite[Lemma 4.1]{GoPa:03})}}\label{lem:PF} If $\Omega_\ell$ is polyhedral,
\beq\label{eq:PF1}
\N{\bq}_{{\bf L}^2(\Omega_\ell)} \lesssim \Hsub \N{\curl \bq}_{{\bf L}^2(\Omega_\ell)}\quad \tfa\, \bq \in \curl_h^{\ell} \RT^\ell_h,
\eeq
where the omitted constant is independent of $h$ and $\Hsub$.
\ele

\noi Note:
\ben
\item In \cite{GoPa:03} the subdomains are related to the coarse grid elements, but the result \eqref{eq:PF1} is independent of the coarse mesh and thus holds in our more general setting where the subdomains can be unrelated to the coarse grid.
\item The result \eqref{eq:PF1} is proved in \cite{GoPa:03}  for $\Omega_\ell$ convex, using the Poincar\'e--Friedrichs type-inequality for convex domains in \cite[Chapter III, Prop.~5.1]{GiRa:86}. This inequality, however, holds for polyhedral domains by \cite[Prop.~4.6]{AmBeDaGi:98}, and thus \cite[Lemma 4.1]{GoPa:03} holds in this more-general situation (see \cite[Remark 2.2]{GoPa:03}).
\een

Finally, we recall the following result from, e.g., \cite[Theorem 52]{Sc:09}, \cite[Equation 4.10]{GoPa:03} (with a similar result in \cite[Lemma 5.2]{ArFaWi:00}).

\ble\label{lem:magic}
\igg{If $\Omega$ is a convex polyhedron}  
then,
given $\bq_h \in \curl_h \RT_h$, there exists a unique field in ${\bf H}_0(\curl; \Omega)$, which we denote by $\bS \bq_h$, such that
$\curl (\bS \bq_h) = \curl \bq_h$ and 
$\dive \bS \bq_h=0.$ 
Furthermore,
\beq\label{eq:magic}
\N{\bq_h - \bS \bq_h}_{{\bf L}^2(\Omega)} \lesssim h \N{\curl \bq_h}_{{\bf L}^2(\Omega)}.
\eeq
\ele

The key point from Lemma \ref{lem:magic} is that although $\bq_h$ is not divergence-free, $\bS \bq_h$ provides an approximation to $\bq_h$ that is divergence-free and has the same curl. The assumption on the geometry of $\Omega$ comes from the fact that we need $H^2$ regularity of solutions of Laplace's equation (see the references in Appendix \ref{app:2} where we also use this).

\section{Theory  of Additive Schwarz methods}
\label{sec:Convergence}
The following theory establishes rigorously that Property (ii) of \S\ref{sec:analyse} holds 
for the preconditioner \eqref{eq:defAS} applied to $A_\abs$ if $\vert \abs\vert $ is   sufficiently  large, $\Hsub$,
$\Hcs$ are sufficiently small and the overlap is generous.

The theory is split into four sections; Sections \ref{sec:41}, \ref{sec:42}, and \ref{sec:44} are very similar to the Helmholtz theory in \cite{GrSpVa:17}.
\S \ref{sec:43} is very different to the Helmholtz theory in that we need to use and adapt the arguments of \cite{GoPa:03}; see the discussion at the beginning of \S \ref{sec:43}.

\subsection{Stable splitting and associated results}\label{sec:41}

\begin{lemma}\textbf{\emph{(Stable splitting in the $\curlt, \wn$-norm)}}\label{lem:stable_splitting}
For all $\bv_h \in \Ned_h$, there exist $\bv^{\ell} \in \Ned^\ell$ 
for each $\ell = 0, \ldots, N$ such that 
\begin{equation}
\label{eq:stable_splitting}
\bv_h = \sum_{\ell=0}^N \bv^{\ell} \quad \text{and } \quad 
\sum_{\ell=0}^N \N{\bv^{\ell}}_{\weight}^2 \ \lesssim\ \Cstab\bigg( 1 + \left(\frac{\Hcs}{\delta}\right)^2 \bigg) \Vert \bv_h \Vert_{\weight}^2.
\end{equation}  
\end{lemma}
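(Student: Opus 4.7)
My approach is the standard overlapping Schwarz stable decomposition recipe (coarse quasi-interpolant plus partition-of-unity splitting of the residual), but done in a way that respects the curl-conforming structure — as in the Maxwell work \cite{GoPa:03} and the H(curl) Schwarz theory of \cite{HiTo:00,PaZh:02} — and with constants tracked explicitly in the weighted $(\curlt,k)$-norm. The $k$-dependence is built in essentially for free because $\|\cdot\|_{\weight}^2$ simply weights the $L^2$ part by $k^2$: once one shows the usual $L^2 + L^2(\curl)$ bounds with a factor $H$ in front of the curl term from $L^2$-approximation, the combination $H \sim 1$-type scaling together with $k H \lesssim 1$ (which is implicit via the $H$-contribution in the term $(1+(H/\delta)^2)$ only, not in $\Cstab$) is automatic.

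\textbf{Coarse component.} First I would choose $\bv^0 \in \Ned_H$ to be a quasi-interpolant of $\bv_h$ into the coarse N\'ed\'elec space that is stable in $\bLtO$ and in the full curl norm, and that has the first-order approximation property
\beqs
\N{\bv_h - \bv^0}_{\bLtO} \lesssim \Hcs \,\N{\curl \bv_h}_{\bLtO},\qquad \N{\curl \bv^0}_{\bLtO} \lesssim \N{\curl \bv_h}_{\bLtO},\qquad \N{\bv^0}_{\bLtO} \lesssim \N{\bv_h}_{\bLtO}.
\eeqs
Such an operator is available in the form of the Hiptmair--Xu / Sch\"oberl quasi-interpolants (see, e.g., \cite[Thm.~52]{Sc:09}, used already in Lemma~\ref{lem:magic}), constructed by combining an $L^2$-stable regularisation with the canonical N\'ed\'elec interpolant. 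Under the assumption $kH \lesssim 1$ that is implicit throughout the paper (coming via $\Hsub$), multiplying the $L^2$ bound by $k^2$ gives $\N{\bv^0}_{\weight} \lesssim \N{\bv_h}_{\weight}$.

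\textbf{Fine subdomain splitting via a discrete Helmholtz decomposition.} Set $\bw := \bv_h - \bv^0 \in \Ned_h$; by the above $\N{\bw}_{\weight} \lesssim \N{\bv_h}_{\weight}$ and also $\N{\bw}_{\bLtO} \lesssim \Hcs \N{\curl \bv_h}_{\bLtO}$. I would now split $\bw$ via the discrete Helmholtz decomposition \eqref{eq:orthog} as $\bw = \bp + \grad \varphi$ with $\bp \in \curl_h \RT_h$ and $\varphi \in W_h$, orthogonal in $\bLtO$. Let $\{\chi_\ell\}_{\ell=1}^N$ be a partition of unity subordinate to $\{\Omega_\ell\}$ with $\|\chi_\ell\|_\infty \leq 1$ and $\|\nabla \chi_\ell\|_\infty \lesssim 1/\delta$. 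For the gradient part, I scalar-split: pick $\varphi^\ell \in W_h^\ell$ as a Scott--Zhang-type interpolant of $\chi_\ell \varphi$, and set the gradient contribution of $\bv^\ell$ to be $\grad \varphi^\ell \in \Ned_h^\ell$. For the divergence-free piece $\bp$, I would apply Lemma~\ref{lem:magic} to replace $\bp$ by the divergence-free $\bS\bp \in {\bf H}^1(\Omega)$ (at the cost of an $h\N{\curl\bw}$ error, which is harmless), then form $\bp^\ell := \Pi^{\Ned}_h(\chi_\ell\, \bS\bp)$, where $\Pi^{\Ned}_h$ is the (well-defined, since $\bS\bp$ is $\bH^1$-regular) N\'ed\'elec interpolant. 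Standard commuting-diagram bounds and the product rule then give
\beqs
\N{\bp^\ell}_{\bLtO}^2 \lesssim \N{\bS\bp}_{\bLtO(\Omega_\ell)}^2,\qquad \N{\curl \bp^\ell}_{\bLtO}^2 \lesssim \N{\curl \bp}_{\bLtO(\Omega_\ell)}^2 + \delta^{-2}\N{\bS\bp}_{\bLtO(\Omega_\ell)}^2,
\eeqs
and an analogous pair for $\varphi^\ell$, with $\|\nabla\varphi\|_{\bLtO(\Omega_\ell)}$ and $\delta^{-2}\|\varphi\|_{L^2(\Omega_\ell)}^2$ on the right.

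\textbf{Summation and the $(H/\delta)^2$ factor.} Finally I would sum over $\ell$, using the finite overlap assumption \eqref{eq:finoverlap} to control all sums by a constant times the global $\bLtO$- and curl-norms of $\bp$, $\bS\bp$, $\varphi$, $\nabla\varphi$. The crucial ingredient that converts the $L^2$-norms into $\Hcs$ times a curl-norm is Lemma~\ref{lem:PF}: $\N{\bp}_{\bLtO} \lesssim \Hcs\N{\curl\bp}_{\bLtO} = \Hcs \N{\curl \bw}_{\bLtO}$ (applied globally, taking $\Omega_\ell = \Omega$), and an analogous Poincar\'e estimate for $\varphi$ coming from the fact that $\curl\grad\varphi = 0$ combined with the $\bLtO$-orthogonality in \eqref{eq:orthog}, which yields $\N{\grad\varphi}_{\bLtO} \leq \N{\bw}_{\bLtO}$. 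Putting it all together, $\sum_\ell \N{\bv^\ell}_{\weight}^2 \lesssim (1 + (\Hcs/\delta)^2)(\N{\curl\bw}_{\bLtO}^2 + k^2 \N{\bw}_{\bLtO}^2) \lesssim (1 + (\Hcs/\delta)^2)\N{\bv_h}_{\weight}^2$, and $\bv_h = \bv^0 + \sum_\ell \bv^\ell$ by construction.

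\textbf{Main obstacle.} The genuinely non-trivial step is the treatment of the divergence-free piece $\bp$, which is only piecewise polynomial and has no a priori $\bH^1$-regularity, so $\Pi^{\Ned}_h(\chi_\ell \bp)$ is not directly well-defined and a naive product-rule bound would cost an uncontrolled $h^{-1}$. Replacing $\bp$ by $\bS\bp$ via Lemma~\ref{lem:magic} (which is exactly where the convexity of $\Omega$ enters) is the key device that makes this construction work, mirroring the corresponding step in \cite{GoPa:03} and lifted here to the $k$-weighted setting.
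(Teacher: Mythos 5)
Your overall strategy (coarse component plus partition-of-unity splitting of the remainder, with the discrete Helmholtz decomposition and the regularisation operator $\bS$ of Lemma \ref{lem:magic} used to handle the discretely divergence-free piece) is the same family of argument as the proofs the paper relies on; note, however, that the paper does not reprove the splitting at all -- it simply invokes \cite[Lemma 4.1]{PaZh:02} (observing that the stability constant there is independent of the weight corresponding to $\wn^2$, and that $\Hcs^{-1}$ can be replaced by $\delta^{-1}$ throughout that proof) and \cite[Theorem 4.4]{To:00}. Measured against those proofs, your construction has two genuine gaps. First, the coarse quasi-interpolant you postulate cannot exist: the property $\N{\bv_h-\bv^0}_{\bLtO}\lesssim \Hcs\N{\curl \bv_h}_{\bLtO}$ fails for any operator with finite-dimensional range, since taking $\bv_h=\grad\phi_h$ with $\phi_h\in W_h$ gives $\curl\bv_h=\bze$ and would force $\bv^0=\grad\phi_h\in\Ned_H$ exactly. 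This is precisely why \cite{PaZh:02}, \cite{To:00} and \cite{GoPa:03} perform the discrete Helmholtz decomposition \eqref{eq:orthog} \emph{first} and give the two components \emph{separate} coarse contributions: a coarse scalar (Scott--Zhang type) interpolant of the potential $p_h$, whose gradient is the coarse part of $\grad p_h$, and a coarse curl-conforming interpolant of the regularised field $\bS\bq_h$ (which is where convexity and the $\mathbf{H}^1$ bound enter). Doing the Helmholtz decomposition only on the residual $\bw=\bv_h-\bv^0$ does not repair the false approximation estimate, and your coarse space term as written cannot control the gradient part at all.

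Second, the step that is supposed to produce the factor $\Hcs$ in front of the $\delta^{-2}$ terms is incorrect: applying Lemma \ref{lem:PF} ``globally, taking $\Omega_\ell=\Omega$'' gives $\N{\bp}_{\bLtO}\lesssim \mathrm{diam}(\Omega)\N{\curl\bp}_{\bLtO}$, i.e.\ a constant of order one, not of order $\Hcs$; similarly there is no inequality $\N{\varphi}_{\LtO}\lesssim\Hcs\N{\grad\varphi}_{\bLtO}$ for $\varphi\in W_h$ -- the Friedrichs constant on $\Omega$ is again $O(1)$. In the cited proofs the $\Hcs$ gain comes from coarse-grid \emph{approximation} (of $p_h$ in $H^1_0(\Omega)$ and of $\bS\bq_h$ in $\mathbf{H}^1(\Omega)$), not from a global Poincar\'e inequality, and without it your final estimate degenerates to $(1+\delta^{-2})\N{\bv_h}_{\weight}^2$. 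A smaller point: you invoke ``$\wn\Hcs\lesssim1$'' to absorb the coarse $L^2$ term, but the lemma as stated (and as proved in \cite{PaZh:02}, \cite{To:00}) holds with a constant independent of $\wn$ and with no mesh condition, so a proof requiring $\wn\Hcs\lesssim 1$ would establish a strictly weaker statement than the one claimed.
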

\begin{proof}
When $\delta \sim \Hcs$, \eqref{eq:stable_splitting} is proved in \cite[Lemma 4.1]{PaZh:02}, noting that their stability constant is independent of their constant $\alpha$ (which corresponds to our $\wn^2$). The proof of \cite[Lemma 4.1]{PaZh:02} can easily be repeated in the case that $\delta$ is independent of $\Hcs$: $\Hcs^{-1}$ on the right-hand side of \cite[Equation 4.1]{PaZh:02} should be replaced by $\delta^{-1}$, and this change propagated through the proof; the result is \eqref{eq:stable_splitting}.

The result \eqref{eq:stable_splitting} with $\delta$ independent of $\Hcs$, but with the additional assumption that $\Omega$ is convex, is proved in \cite[Theorem 4.4]{To:00} (noting again that the stability constant in that theorem is independent of the parameters $\eta_1$ and $\eta_2$ in that paper, which for us correspond to $\wn^2$ and $1$ respectively).
\end{proof}

Note that in the Helmholtz case, the factor $(\Hcs/\delta)^2$ in \eqref{eq:stable_splitting} can be replaced by $\Hcs/\delta$ -- see \cite[Lemma 4.1]{GrSpVa:17} -- but this feature has so far not been established for the Maxwell case. For this reason, the dependence of our GMRES bounds in \S\ref{sec:53} on $\Hcs/\delta$ is worse than that in the analogous Helmholtz results in \cite{GrSpVa:17}.

The next  lemma is a kind of converse to Lemma \ref{lem:stable_splitting}. 
Here the energy of a sum of components is estimated above by the sum of the energies. 

\begin{lemma}\label{lem:norm_of_sum}
For all choices of $\bv^{\ell} \in \Ned_{\ell}$ ,  
 $\ell = 0, \cdots, N$, we have  
\begin{equation}
\label{eq:norm_of_sum}
\quad \bigg\Vert \, \sum_{\ell=0}^N  \bv^{\ell} \, \bigg\Vert_{\weight}^2  \lesssim
\sum_{\ell=0}^N \N{\bv^{\ell}}_{\weight}^2\ .
\end{equation}  
\end{lemma}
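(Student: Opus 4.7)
The plan is to separate the coarse component from the subdomain components and then exploit the finite overlap assumption \eqref{eq:finoverlap} on the subdomains. Note that for $\ell \geq 1$, each $\bv^\ell$ is supported in $\overline{\Omega}_\ell$ (via extension by zero from $\Ned^\ell_h \subset {\bf H}_0(\curlt;\Omega_\ell)$), which is the key geometric fact that makes the argument work.

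First I would apply the elementary inequality $\|a+b\|_{\weight}^2 \leq 2\|a\|_{\weight}^2 + 2\|b\|_{\weight}^2$ to split off the coarse contribution:
\begin{equation*}
\bigg\| \sum_{\ell=0}^N \bv^\ell\bigg\|_{\weight}^2 \leq 2 \|\bv^0\|_{\weight}^2 + 2 \bigg\|\sum_{\ell=1}^N \bv^\ell\bigg\|_{\weight}^2.
\end{equation*}
The first term already contributes acceptably to the right-hand side of \eqref{eq:norm_of_sum}, so the main task is to bound the second term.

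Expanding the norm as a double sum of inner products, I would use the observation that $(\bv^\ell,\bv^{\ell'})_{\weight}$ vanishes unless $\Omega_\ell \cap \Omega_{\ell'}\neq \emptyset$, i.e., unless $\ell' \in \Lambda(\ell)$, because both $\bv^\ell$ and $\curl \bv^\ell$ are supported in $\overline{\Omega}_\ell$. Thus
\begin{equation*}
\bigg\|\sum_{\ell=1}^N \bv^\ell\bigg\|_{\weight}^2 = \sum_{\ell=1}^N \sum_{\ell' \in \Lambda(\ell)} (\bv^\ell, \bv^{\ell'})_{\weight} \leq \sum_{\ell=1}^N \sum_{\ell' \in \Lambda(\ell)} \tfrac{1}{2}\bigl(\|\bv^\ell\|_{\weight}^2 + \|\bv^{\ell'}\|_{\weight}^2\bigr),
\end{equation*}
using Cauchy--Schwarz together with the arithmetic--geometric mean inequality. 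The finite overlap assumption $\#\Lambda(\ell) \lesssim 1$ then implies that each term $\|\bv^\ell\|_{\weight}^2$ appears only $O(1)$ times in the double sum, yielding the required bound $\lesssim \sum_{\ell=1}^N \|\bv^\ell\|_{\weight}^2$.

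There is no real obstacle here: this is a standard "colouring" estimate in additive Schwarz theory and the proof is essentially a two-line computation once the support property of the $\bv^\ell$ and the finite overlap assumption are invoked. The only subtle point worth recording is that the estimate is completely independent of $k$ and $\abs$, since the $(\curlt,\wn)$-inner product is a local bilinear form with the same support structure as the ${\bf L}^2$-inner product, and hence the argument transfers verbatim from the standard $\HcO$ setting.
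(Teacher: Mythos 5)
Your proof is correct and follows essentially the same strategy as the paper's: handle the coarse component separately (you use $\|a+b\|^2 \le 2\|a\|^2 + 2\|b\|^2$, the paper expands the square and applies Cauchy--Schwarz/AM--GM to the cross term, which is equivalent), and then control $\|\sum_{\ell \ge 1}\bv^\ell\|_{\weight}^2$ by exploiting the support property and the finite overlap assumption \eqref{eq:finoverlap} (the paper uses two successive Cauchy--Schwarz applications, you use AM--GM termwise followed by the same counting; these give the same bound).
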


\noi The proof of this result is essentially identical to the Helmholtz analogue \cite[Lemma 4.2]{GrSpVa:17}. We include it, nevertheless, since it is key to the rest of the paper.

\begin{proof}[Proof of Lemma \ref{lem:norm_of_sum}]
Let  $\sum_\ell$ denote  the sum from $\ell = 1$ to $N$ and  
and recall the notation   $\Lambda(\ell)$ introduced in
\eqref{eq:finoverlap}.  
Several applications of the Cauchy-Schwarz inequality imply that
\begin{align}
\bigg\Vert \sum_{\ell} \bv^\ell \bigg\Vert_{\weight}^2  \hspace{-1ex}= & 
\bigg(\sum_\ell \bv^\ell ,  \sum_{\ell'} \bv^{\ell'}\bigg)_{\weight}  
\hspace{-1ex}=  \sum_\ell \sum_{\ell' \in \Lambda(\ell)}  (\bv^{\ell} ,
\bv^{\ell'})_{\weight}   
 \leq \sum_\ell \N{\bv^\ell}_{\weight} \bigg(\sum_{\ell' \in \Lambda(\ell)}  \Vert \bv^{\ell'}\Vert_{\weight}\bigg)\nonumber \\
\leq  & \bigg(\sum_\ell \N{\bv^\ell}_{\weight}^2\bigg)^{1/2}  
\bigg(\sum_\ell \bigg(\sum_{\ell' \in \Lambda(\ell)}  
\big\|\bv^{\ell'}\big\|_{\weight}
\bigg)^2 \bigg)^{1/2} \nonumber \\
\  \leq \  & 
\bigg(\sum_\ell \N{\bv^\ell }_{\weight}^2\bigg)^{1/2}  
\bigg(\sum_\ell  \# \Lambda(\ell) \sum_{\ell' \in \Lambda(\ell)}  
\big\| \bv^{\ell'}\big\|_{\weight}^2 
 \bigg)^{1/2} 
\  \lesssim\     
\sum_\ell \N{ \bv^\ell }_{\weight}^2 \ , \label{eq:sum_from_1} 
\end{align}
where we have also used the finite overlap assumption \eqref{eq:finoverlap}. 
To obtain   \eqref{eq:norm_of_sum}, we
write    
\begin{align}
\bigg\Vert \sum_{\ell=0}^N \bv^\ell \bigg\Vert_{\weight}^2    =  & 
\bigg(\sum_{\ell=0}^N \bv^\ell ,  \sum_{\ell=0}^N \bv^\ell\bigg)_{\weight} \label{eq:rheq}
 =  \big\Vert \bv^0\big\Vert_{\weight}^2 + 2 \bigg(\bv^0, \sum_{\ell} \bv^\ell\bigg)_{\weight}
 +  \bigg(\sum_\ell \bv^\ell ,  \sum_\ell \bv^\ell\bigg)_{\weight}. 
\end{align}
Using  the  Cauchy-Schwarz and the  arithmetic-geometric mean
inequalities on the middle term,  we can   estimate \eqref{eq:rheq}
from   above by 
$$ \lesssim \ \big\Vert \bv^0\big\Vert_{\weight}^2 + \bigg\Vert \sum_\ell \bv^\ell
\bigg\Vert_{\weight}^2 ,  $$
and the result follows from \eqref{eq:sum_from_1}. 
 \end{proof}
 
\bre\textbf{\emph{(Impedance boundary conditions)}}\label{rem:imp}
The main obstacle to extending the theory in this paper to the BVP \eqref{eq:BVP} with the PEC boundary condition replaced by the impedance boundary condition \eqref{eq:imp} is that the Hilbert space for the variational formulation of the impedance problem is \emph{not} $\HcO$, but ${\bf H}_{\rm imp}(\curl;\Omega)$ (see, e.g., \cite[\S3.8]{Mo:03}). The first step towards extending the theory in the paper to the impedance BVP would be to establish a stable-splitting in the norm on ${\bf H}_{\rm imp}(\curl;\Omega)$.
\ere 
 
 \subsection{Definition of the projection operators and the path towards bounds on the field of values}\label{sec:42}

For each $\ell = 1, \ldots, N$,  we define  linear operators $\proj_{\abs}^{\ell} : 
\HocO\rightarrow \Ned^{\ell}$ as follows. 
Given $\bv \in \HocO$, $\proj_{\abs}^{\ell} \bv$ is defined to be the unique solution of 
the equation 
\beq 
\label{eq:defQi} 
\aeps (\proj_{\abs}^{\ell} \bv , \bw_h^\ell) \ =\  \aeps (\bv , \bw_h^\ell),  \quad \bw_h^\ell \in \Vell. 
\eeq 
Recall from the discussion underneath \eqref{eq:Vell} that $\Ned^\ell_h$ can be considered as a subspace of $\HocO$ by extension by zero, and we can therefore consider $\proj_{\abs}^{\ell}\bv$ as an element of $\HocO$ with support on $\Omega_\ell$. 
Analogously, given $\bv \in \HocO$, $\proj_{\abs}^{0} \bv$ is defined to be the unique solution of 
the equation 
\beq 
\label{eq:defQi0} 
\aeps (\proj_{\abs}^{0} \bv , \bw_H) \ =\  \aeps (\bv , \bw_H),  \quad \bw_H \in \bQ_H,
\eeq 
and thus $\proj_{\abs}^{0} : 
\HocO\rightarrow \Ned_H \subset \HocO.$
We then define 
$$
\proj_{\abs} = \sum_{\ell=0}^N \proj_{\abs}^{\ell};$$ 
we show in  Theorem
\ref{thm:repQ} below that the   matrix
representation of $\proj_{\abs}$ corresponds to the action of the preconditioner \eqref{eq:defAS} on the matrix    $A_\abs$.

\bre\textbf{\emph{(Notation for the projection operators $\proj_\abs^\ell$)}}
The Helmholtz theory in \cite{GrSpVa:17} used the letter $Q$ for the projection operators, following the notation in \cite{CaWi:92} 
Here we use the letter $T$ for the projection operators, following \cite{GoPa:03}, and also allowing us to use $Q$ for spaces of N\'ed\'elec elements
(as in, e.g., \cite{GoPa:03}, \cite{ArFaWi:00}, \cite{Sc:09}, \cite{Sc:01}).
\ere

\noi The goals of this section (\S\ref{sec:Convergence}) are to (i) bound $\|\proj_\abs\|_{\weight}$ from above, and (ii) bound the field of values 
of $\proj_\abs$ away from the origin, where the field of values is the set of complex numbers
\beq\label{eq:fov}
\frac{\left(\bv_h, \proj_\abs \bv_h\right)_{\weight}}{\|\bv_h\|_{\weight}^2} \quad\text{for } \bv_h \in \Ned_h\setminus \{0\};
\eeq
note that the field of values is computed with respect 
to the wavenumber-dependent $(\cdot, \cdot)_{\weight} $ inner product.
A bound from above on $\|\proj_\abs\|_{\weight}$ follows immediately from the following.

\begin{theorem}\textbf{\emph{(Upper bound on the norm of $\proj_{\abs}$)}}
\label{thm:boundQ}
\beqs
\nrme{\proj_{\abs} \bv_h} \ \lesssim \ 
\left(\ksqeps\right) \nrme{\bv_h}
\quad \text{for all}\quad  \bv_h \in \Ned_h.
\eeqs
\end{theorem}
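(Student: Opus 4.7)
The plan is to combine Lemma \ref{lem:norm_of_sum} with a localized energy estimate for each projected component $\proj_\abs^\ell \bv_h$, where the localized estimate follows from coercivity (Lemma \ref{lem:coer}) and continuity (Lemma \ref{lem:cont}) of $a_\abs(\cdot,\cdot)$.

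First I would apply Lemma \ref{lem:norm_of_sum} to the decomposition $\proj_\abs \bv_h = \sum_{\ell=0}^N \proj_\abs^\ell \bv_h$, which immediately gives
\beq\label{eq:plan1}
\N{\proj_\abs \bv_h}_{\weight}^2 \lesssim \sum_{\ell=0}^N \N{\proj_\abs^\ell \bv_h}_{\weight}^2.
\eeq
So it suffices to estimate each term on the right. For $\ell \geq 1$, since $\proj_\abs^\ell \bv_h \in \Ned^\ell_h$, the defining relation \eqref{eq:defQi} yields
\beqs
a_\abs(\proj_\abs^\ell \bv_h, \proj_\abs^\ell \bv_h) \ =\ a_\abs(\bv_h, \proj_\abs^\ell \bv_h),
\eeqs
and since $\proj_\abs^\ell \bv_h$ is supported in $\overline{\Omega}_\ell$, the continuity estimate of Lemma \ref{lem:cont} applied over $\Omega_\ell$ bounds the right-hand side by a constant multiple of $\N{\bv_h}_{\weight,\Omega_\ell}\N{\proj_\abs^\ell \bv_h}_{\weight}$, where the subscript indicates the corresponding local norm. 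Combining this with the coercivity estimate of Lemma \ref{lem:coer} applied to $\proj_\abs^\ell \bv_h$ on $\Omega_\ell$, we obtain
\beq\label{eq:plan2}
\N{\proj_\abs^\ell \bv_h}_{\weight} \lesssim \left(\ksqeps\right)\N{\bv_h}_{\weight,\Omega_\ell}.
\eeq
The $\ell = 0$ case is analogous, using \eqref{eq:defQi0} together with global continuity and coercivity on $\Omega$, and yields \eqref{eq:plan2} with $\N{\bv_h}_{\weight,\Omega_\ell}$ replaced by $\N{\bv_h}_{\weight}$.

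Finally I would insert the local bounds \eqref{eq:plan2} into \eqref{eq:plan1} and use the finite overlap assumption \eqref{eq:finoverlap} to conclude
\beqs
\sum_{\ell=1}^N \N{\bv_h}_{\weight,\Omega_\ell}^2 \ \lesssim\ \N{\bv_h}_{\weight}^2,
\eeqs
which gives
\beqs
\N{\proj_\abs \bv_h}_{\weight}^2 \ \lesssim\ \left(\ksqeps\right)^2 \N{\bv_h}_{\weight}^2,
\eeqs
and taking square roots produces the stated bound. The argument is essentially the standard one-line ``coercivity beats continuity'' estimate for Galerkin projections; the only nonroutine ingredient is that our coercivity constant degrades by the factor $|\abs|/k^2$, so this factor appears (squared, before taking square roots) in the final bound. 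I do not anticipate a serious obstacle here since every ingredient is already established in the preceding sections; the main point worth flagging is the need to track that continuity only contributes the \emph{local} norm $\N{\bv_h}_{\weight,\Omega_\ell}$ so that finite overlap can be applied before summation.
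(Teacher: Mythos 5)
Your proof is correct, but it follows a genuinely different route from the paper's. Both proofs open identically with Lemma \ref{lem:norm_of_sum}, reducing matters to bounding $\sum_{\ell=0}^N \N{\proj_\abs^\ell\bv_h}_{\weight}^2$, and both rely on the continuity and coercivity of $a_\abs$. Where they part ways: you run the standard ``coercivity beats continuity'' estimate for Galerkin projections \emph{locally} on each $\Omega_\ell$ to get $\N{\proj_\abs^\ell\bv_h}_{\weight}\lesssim (\wn^2/|\abs|)\N{\bv_h}_{\weight,\Omega_\ell}$, and then invoke the finite overlap assumption \eqref{eq:finoverlap} directly on the sum $\sum_\ell\N{\bv_h}_{\weight,\Omega_\ell}^2$. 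The paper instead keeps the sum intact: it lower-bounds each $\N{\proj_\abs^\ell\bv_h}_{\weight}^2$ by $\Im[\Theta\, a_\abs(\proj_\abs^\ell\bv_h,\proj_\abs^\ell\bv_h)]$, uses the Galerkin relation to rewrite these as $\Im[\Theta\, a_\abs(\bv_h,\proj_\abs^\ell\bv_h)]$, collects the sum into $\Im[\Theta\, a_\abs(\bv_h,\sum_\ell\proj_\abs^\ell\bv_h)]$, then drops the $\Im[\Theta\,\cdot]$ using $|\Theta|=1$, applies global continuity, and finishes with a second application of Lemma \ref{lem:norm_of_sum}. Your version avoids the $\Theta$-collection trick entirely and is closer to the classical additive Schwarz estimates; it makes the operator-norm bound on each individual $\proj_\abs^\ell$ explicit, which could be useful elsewhere. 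The paper's version keeps the argument parallel to the Helmholtz proof in \cite{GrSpVa:17} and never isolates the individual projection norms. The final constant is the same, and no step of yours fails; the only formal quibble is that Lemma \ref{lem:cont} is stated over $\Omega$, so when you ``apply it over $\Omega_\ell$'' you should note that this is just Cauchy--Schwarz on $\Omega_\ell$ using the compact support of $\proj_\abs^\ell\bv_h$ in $\overline\Omega_\ell$ --- which is exactly what you flagged at the end.
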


\begin{proof}
By the definition of $\proj_{\abs}$ and Lemma \ref{lem:norm_of_sum}, we have 
\begin{equation}\nrme{\proj_{\abs}  \bv_h}^2 \ = \ \left\Vert\sum_{\ell=0}^N
    \proj_{\abs}^{\ell}  \bv_h\right\Vert_{\weight}^2 \ \lesssim\ 
  \sum_{\ell=0}^N \N{\proj_{\abs}^{\ell} \bv_h}_{\curlt,\wn}^2\ . \label{eq:641}\end{equation}
Furthermore, by applying Lemma \ref{lem:coer} and the definitions 
\eqref{eq:defQi}, \eqref{eq:defQi0} we have 
\begin{align*}
\sum_{\ell=0}^N \nrme{\Qepsell \bv_h}^2 \ \lesssim \ &
\left(\ksqeps\right) \sum_{\ell=0}^N \Im \big[\Theta a_\abs(\Qepsell
\bv_h,\Qepsell \bv_h)\big]
\ =  \ 
\left(\ksqeps\right)  \Im \left[\Theta \sum_{\ell=0}^N
  a_{\abs}(\bv_h,\Qepsell \bv_h)\right]\\
 \ = \ & 
\left(\ksqeps\right)  \Im \left[\Theta  a_{\abs}\left(\bv_h,\sum_{\ell=0}^N \Qepsell \bv_h\right)\right]
\ \leq  \ 
\left(\ksqeps\right)  \left\vert  a_{\abs}\left(\bv_h,\sum_{\ell=0}^N \Qepsell \bv_h\right)\right\vert\nonumber
\end{align*}
(recalling that  $|\Theta|=1$).
Then, using  Lemma \ref{lem:cont}, and then 
Lemma \ref{lem:norm_of_sum},  
we have 
\begin{align}
\sum_{\ell=0}^N \nrme{\Qepsell \bv_h}^2  \lesssim  
\left(\ksqeps\right)  \nrme{\bv_h} 
\left\Vert \sum_{\ell=0}^N \Qepsell \bv_h\right\Vert_{\weight}
 \lesssim 
\left(\ksqeps\right)  \nrme{\bv_h} \bigg(\sum_{\ell=0}^N \nrme{\Qepsell \bv_h}^2\bigg)^{1/2}, \label{eq:642}
 \end{align}
and the result follows on combining \eqref{eq:642} with \eqref{eq:641}.
\end{proof}

The next two results are two of the three ingredients we use to bound the field of values away from the origin (the third ingredient is provided in \S\ref{sec:43}, and the bound is proved in \S\ref{sec:44}).

\begin{lemma}\label{lem:lowrestQi}
\beq\label{eq:45bound}
  \sum_{\ell=0}^N \Vert \proj_{\abs}^{\ell} \bv_h \Vert_{\weight}^2 
   \gtrsim 
\bigg( 1 + \left(\frac{\Hcs}{\delta}\right)^2 \bigg)^{-1}
     \left( \frac{\vert \abs \vert }{\wn^2} \right)^2 \Vert \bv_h \Vert_{\weight}^2  \quad \tfa \bv_h \in \Ned_h.
\eeq
\end{lemma}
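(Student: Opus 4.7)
The plan is to combine three ingredients: the Galerkin projection property built into the definition of $T_\xi^\ell$, the stable splitting from Lemma \ref{lem:stable_splitting}, and the coercivity of $a_\xi(\cdot,\cdot)$ from Lemma \ref{lem:coer}. The key identity is that, for any $w^\ell \in \Ned^\ell$, the defining relations \eqref{eq:defQi} and \eqref{eq:defQi0} give $a_\xi(v_h, w^\ell) = a_\xi(T_\xi^\ell v_h, w^\ell)$.

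First I would apply the stable splitting to $v_h$: write $v_h = \sum_{\ell=0}^N v^\ell$ with $v^\ell \in \Ned^\ell$ satisfying
\beqs
\sum_{\ell=0}^N \|v^\ell\|_{\weight}^2 \lesssim \bigg(1 + \Big(\frac{\Hcs}{\delta}\Big)^2\bigg) \|v_h\|_{\weight}^2.
\eeqs
Then I would substitute this decomposition into $a_\xi(v_h,v_h)$ and move the test function onto the projection using the Galerkin identity above:
\beqs
a_\xi(v_h,v_h) = \sum_{\ell=0}^N a_\xi(v_h, v^\ell) = \sum_{\ell=0}^N a_\xi(T_\xi^\ell v_h, v^\ell).
\eeqs

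Next, I would bound the right-hand side by continuity (Lemma \ref{lem:cont}) and Cauchy--Schwarz in the $\ell^2$-sum over subdomains:
\beqs
|a_\xi(v_h,v_h)| \lesssim \bigg(\sum_{\ell=0}^N \|T_\xi^\ell v_h\|_{\weight}^2\bigg)^{1/2} \bigg(\sum_{\ell=0}^N \|v^\ell\|_{\weight}^2\bigg)^{1/2}.
\eeqs
Inserting the stable-splitting bound yields
\beqs
|a_\xi(v_h,v_h)|^2 \lesssim \bigg(\sum_{\ell=0}^N \|T_\xi^\ell v_h\|_{\weight}^2\bigg) \bigg(1 + \Big(\frac{\Hcs}{\delta}\Big)^2\bigg) \|v_h\|_{\weight}^2.
\eeqs

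Finally, I would bound $|a_\xi(v_h,v_h)|$ from \emph{below} using the coercivity result \eqref{eq:coercivity}, giving $|a_\xi(v_h,v_h)|^2 \gtrsim (|\xi|/k^2)^2 \|v_h\|_{\weight}^4$. Combining this lower bound with the above upper bound and dividing through by $\|v_h\|_{\weight}^2$ and by the $(1+(\Hcs/\delta)^2)$ factor delivers \eqref{eq:45bound} immediately. None of the steps are really an obstacle here — the argument mirrors the Helmholtz analogue in \cite{GrSpVa:17} and only requires that the stable-splitting constant in Lemma \ref{lem:stable_splitting} be available in the weighted $(\curlt,k)$-norm, which it is; the factor $(\Hcs/\delta)^2$ (rather than $\Hcs/\delta$) in the conclusion is inherited directly from the Maxwell stable splitting, as flagged in the remark following Lemma \ref{lem:stable_splitting}.
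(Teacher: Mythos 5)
Your proof is correct and follows essentially the same route as the paper's: both start from the coercivity lower bound on $a_\xi(v_h,v_h)$, use the stable splitting and the Galerkin orthogonality of each $T_\xi^\ell$ to rewrite $a_\xi(v_h,v_h)=\sum_\ell a_\xi(T_\xi^\ell v_h,v^\ell)$, and then apply continuity plus Cauchy--Schwarz and the stable-splitting bound. The only cosmetic difference is that the paper works directly with $\Im[\Theta\,a_\xi(\cdot,\cdot)]$ rather than with $|a_\xi(v_h,v_h)|$, which changes nothing.
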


\begin{proof}
Using Lemma \ref{lem:coer}, the decomposition of $\bv_h$ given in Lemma
\ref{lem:stable_splitting},
the definition of $\proj_{\abs}^{\ell}$, and
Lemma \ref{lem:cont},
we obtain
\begin{align*}
\left(\frac{\vert \abs \vert }{\wn^2}\right) \Vert \bv_h \Vert_{\weight}^2 \ \lesssim& \ \Im \big[\Theta a_\abs (\bv_h, \bv_h)\big] \ = \ \sum_{l=0}^N \Im \big[\Theta a_\abs (\bv_h, \bv^\ell)\big] \\
 =  & \ \sum_{l=0}^N \Im \big[\Theta a_\abs (\Qepsell \bv_h, \bv^\ell)\big] \ \lesssim\  \sum_{l=0}^N \Vert \Qepsell \bv_h \Vert_{\weight} \Vert \bv^\ell\Vert_{\weight} \ . 
\end{align*}
Using the Cauchy-Schwarz inequality and Lemma \ref{lem:stable_splitting}, we find
\begin{align*}
\left(\frac{\vert \abs\vert }{\wn^2} \right)\Vert \bv_h \Vert_{\weight}^2 &\lesssim 
\bigg( \sum_{\ell=0}^N \Vert \proj_{\abs}^{\ell} \bv_h \Vert_{\weight}^2\bigg)^{1/2}  \bigg( \sum_{\ell=0}^N 
\Vert \bv^\ell\Vert_{\weight}^2 \bigg)^{1/2} \\
& \lesssim     \bigg(
  \sum_{\ell=0}^N \Vert \proj_{\abs}^{\ell} \bv_h \Vert_{\weight}^2\bigg)^{1/2}  
\bigg( 1 + \left(\frac{\Hcs}{\delta}\right)^2 \bigg)^{1/2} 
\Vert \bv_h\Vert_{\weight} ,
\end{align*}
and the result follows.
\end{proof}

The bound \eqref{eq:45bound} 
shows that to bound the field of values \eqref{eq:fov} away from the origin it is sufficient to bound 
$(\bv_h, \proj_\abs \bv_h )_{\weight}$ from below by 
$\sum_{\ell = 0}^N
\Vert \proj_{\abs}^{\ell} \bv_h \Vert_{\weight}^2$.
The next lemma expresses $(\bv_h, \proj_\abs \bv_h )_{\weight}$ in terms of 
$\sum_{\ell = 0}^N
\Vert \proj_{\abs}^{\ell} \bv_h \Vert_{\weight}^2$ plus a sum of ``remainder" terms $R_{\abs}^\ell (\bv_h)$.

\begin{lemma}\label{lem:combined}  For $\ell = 0, \ldots, N$, set
\igg{\beqs
R_{\abs}^\ell (\bv_h)  : =  \big((\bI - \proj_{\abs}^{\ell})\bv_h, \proj_{\abs}^{\ell} \bv_h \big)_{\weight}.
\eeqs}
Then
\begin{align}
 (\bv_h, \proj_\abs \bv_h )_{\weight} 
=  \sum_{\ell = 0}^N
\Vert \proj_{\abs}^{\ell} \bv_h \Vert_{\weight}^2 + \sum_{\ell = 0}^NR_{\abs}^\ell (\bv_h). \label{eq:E4ii}
\end{align}
and 
\beq\label{eq:boundE}
R_{\abs}^\ell(\bv_h) =(2\wn^2+ \ri \abs)\big(  (\bI-\proj_{\abs}^{\ell})\bv_h, \proj_{\abs}^{\ell}\bv_h \big)_{{\bf L}^2(\Omega_\ell)}.
\eeq
\end{lemma}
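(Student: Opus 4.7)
The plan is to prove \eqref{eq:E4ii} by a direct algebraic expansion, and then to identify $R_\abs^\ell$ via the Galerkin orthogonality built into the definitions \eqref{eq:defQi}, \eqref{eq:defQi0}. Both parts are short; the key is spotting the right algebraic identity relating $(\cdot,\cdot)_{\weight}$ to $a_\abs(\cdot,\cdot)$.

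For \eqref{eq:E4ii}, I would use the linearity $\proj_\abs = \sum_{\ell=0}^N \proj_\abs^\ell$ to write
\[
(\bv_h, \proj_\abs \bv_h)_{\weight} \;=\; \sum_{\ell=0}^N (\bv_h, \proj_\abs^\ell \bv_h)_{\weight},
\]
and then, in each term, decompose $\bv_h = \proj_\abs^\ell \bv_h + (\bI - \proj_\abs^\ell)\bv_h$. The ``diagonal'' piece gives $\|\proj_\abs^\ell \bv_h\|_{\weight}^2$ and the remaining piece is exactly $R_\abs^\ell(\bv_h)$, which proves \eqref{eq:E4ii}.

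For \eqref{eq:boundE}, I would first observe the elementary identity
\[
(\bv, \bw)_{\weight} - a_\abs(\bv, \bw) \;=\; (2\wn^2 + \ri \abs)\,(\bv, \bw)_{{\bf L}^2(\Omega)},
\]
obtained by subtracting the definitions \eqref{eq:Helmholtzvf_intro} and \eqref{eq:weight}. Applying this with $\bv = (\bI - \proj_\abs^\ell)\bv_h$ and $\bw = \proj_\abs^\ell \bv_h$, it suffices to show that $a_\abs((\bI - \proj_\abs^\ell)\bv_h, \proj_\abs^\ell \bv_h) = 0$. This is Galerkin orthogonality: $\proj_\abs^\ell \bv_h \in \Ned_h^\ell$ (resp.\ $\Ned_H$ for $\ell=0$), which sits inside $\HocO$ by extension by zero, and the defining equation \eqref{eq:defQi} (resp.\ \eqref{eq:defQi0}) with test function $\bw_h^\ell := \proj_\abs^\ell \bv_h$ yields precisely $a_\abs((\bI - \proj_\abs^\ell)\bv_h, \proj_\abs^\ell \bv_h) = 0$. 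Finally, because $\proj_\abs^\ell \bv_h$ is supported in $\overline{\Omega_\ell}$, the resulting ${\bf L}^2(\Omega)$ integral collapses to an ${\bf L}^2(\Omega_\ell)$ integral, giving \eqref{eq:boundE}.

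There is no real technical obstacle here; the statement is essentially an algebraic identity encoding the two facts that $(\cdot,\cdot)_{\weight}$ differs from $a_\abs(\cdot,\cdot)$ only by the scalar $(2\wn^2+\ri\abs)$ times the ${\bf L}^2$-inner product, and that each $\proj_\abs^\ell$ is a Galerkin projection with respect to $a_\abs(\cdot,\cdot)$. The true work lies later, in bounding the $R_\abs^\ell(\bv_h)$ terms (via the approximation-theoretic duality arguments of \S\ref{sec:43}) so that they can be absorbed into the lower bound supplied by Lemma \ref{lem:lowrestQi}.
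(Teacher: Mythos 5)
Your proof is correct and follows essentially the same route as the paper: expand using linearity of $\proj_\abs$, split each term via $\bv_h = \proj_\abs^\ell \bv_h + (\bI - \proj_\abs^\ell)\bv_h$, then use the identity $(\bu,\bw)_{\weight} - a_\abs(\bu,\bw) = (2\wn^2+\ri\abs)(\bu,\bw)_{{\bf L}^2(\Omega)}$ together with the Galerkin orthogonality $a_\abs((\bI - \proj_\abs^\ell)\bv_h, \proj_\abs^\ell \bv_h) = 0$ and the support property of $\proj_\abs^\ell\bv_h$. No gaps.
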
 

\begin{proof}
By the definition of $\proj_{\abs}$,
\beqs
(\bv_h, \proj_\abs \bv_h )_{\weight} = \sum_{\ell = 0}^N 
\left(\bv_h, \proj_{\abs}^{\ell} \bv_h \right)_{\weight}= \sum_{\ell = 0}^N
\left\{ \Vert \proj_{\abs}^{\ell} \bv_h \Vert_{\weight}^2 + \left((\bI - \proj_{\abs}^{\ell})\bv_h, \proj_{\abs}^{\ell} \bv_h \right)_{\weight} \right\}.
\eeqs
Observe that
\beqs
(\bu,\bv)_{\weight} = a_\abs(\bu,\bv) + (2\wn^2+ \ri \abs)\big( \bu,\bv\big)_{{\bf L}^2(\Omega)},
\eeqs
and, since $\proj_{\abs}^{\ell}\bv_h \in \Ned^\ell$, the definition of $\proj_{\abs}^{\ell}$ \eqref{eq:defQi}/\eqref{eq:defQi0} implies that 
\beqs
a_\abs((\bI - \proj_{\abs}^{\ell})\bv_h, \proj_{\abs}^{\ell} \bv_h)=0.
\eeqs
The results \eqref{eq:E4ii} and \eqref{eq:boundE} then follow from combining the last three equations and using the fact that $\supp \proj_{\abs}^{\ell}\bv_h \subseteq \Omega_\ell$.
\end{proof}

\subsection{The key results about the projection operators $\proj_{\abs}^{\ell}$ 
}\label{sec:43}

Lemma \ref{lem:combined} above shows that we can bound the field of values of $\proj_{\abs}$ away from the origin, provided that we can get good estimates for the ``remainder terms" $R_{\abs}^\ell(\bv_h)$, $\ell=0, \ldots, N$, given by \eqref{eq:boundE}.
It is at this point that our Maxwell theory deviates substantially from the Helmholtz theory in \cite{GrSpVa:17}. There, the Cauchy-Schwarz inequality was used on \eqref{eq:boundE}, 
\bit
\item[(a)] the analogue of $\|(\bI-\proj_{\abs}^0)\bv_h\|_{{\bf L}^2(\Omega_\ell)}$ was estimated using a duality argument (using $H^2$-regularity of the Laplacian in convex polyhedra), and
\item[(b)] the analogue of $\|\proj_{\abs}^{\ell}\bv_h\|_{{\bf L}^2(\Omega_\ell)}$, $\ell =1,\ldots,N$, was estimated using the standard scalar Poincar\'e--Friedrichs inequality (taking advantage of the fact that the analogue of $\proj_{\abs}^{\ell}\bv_h$ satisfies zero Dirichlet boundary conditions on at least part of $\partial \Omega_\ell$).
\eit
This approach does not immediately carry over to the Maxwell case because (a) duality arguments for Maxwell's equations are more complicated than for the Helmholtz equation, 
and (b) the appropriate analogue of the Poincar\'e--Friedrichs inequality does not hold for gradient fields (see Lemma \ref{lem:PF} above).

Nevertheless, one of the main technical results obtained by Gopalakrishnan and Pasciak, \cite[Lemma 4.3]{GoPa:03}, involves estimating
$( (\bI-\proj_{\abs}^{\ell})\bv_h, \bw_h^\ell)_{{\bf L}^2(\Omega_\ell)}$ 
for $\bw_h^\ell \in \Ned^\ell_h$ and $\ell =0, \ldots, N$. Lemma \ref{lem:jay} below is essentially this result, adapted to our situation by (i) making everything explicit in $\wn$ and $\abs$, and (ii) using coercivity of $a_\abs(\cdot,\cdot)$, instead of an error estimate on the Galerkin solution, in the duality argument.
(Additionally, as noted in \S\ref{sec:orthog}, the subdomains  in \cite{GoPa:03} are related to the coarse grid elements, but here we allow the subdomains to be unrelated to the coarse grid.)

Before stating this key result, we need to prove the following result about approximability of the adjoint problem on the coarse grid. 

\begin{lemma}\textbf{\emph{(Coarse-grid approximability of the adjoint problem)}}
\label{lem:approx}
If Assumption \ref{ass:reg} holds and $\bE$ is the solution of the adjoint problem \eqref{eq:BVPadj} with $\bF \in {\bf L}^2(\Omega)$ and $\dive\bF=0$,
then
\beq\label{eq:ass1}
 \inf_{\bphi_H\in \Ned_H} \N{\bE-\bphi_H}_{\weight} \lesssim \Hcs
  \left(\frac{\wn^2}{|\abs|}\right) \N{\bF}_{{\bf L}^2(\Omega)}.
\eeq
\end{lemma}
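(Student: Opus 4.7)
The plan is to take $\bphi_H$ to be the coarse-grid N\'ed\'elec interpolant of $\bE$ and combine the standard componentwise interpolation estimates with the regularity bound \eqref{eq:reg_result2} available from Assumption \ref{ass:reg}. Since the infimum on the left-hand side of \eqref{eq:ass1} is over all $\bphi_H \in \Ned_H$, any concrete choice yields an upper bound.

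First, I would observe that by Assumption \ref{ass:reg}, the solution $\bE$ of the adjoint problem \eqref{eq:BVPadj} satisfies $\bE, \curl\bE \in {\bf H}^1(\Omega)$, which is sufficient regularity for the N\'ed\'elec interpolation operator on the coarse mesh $\cT^H$ (of the coarse-space polynomial order) to be well-defined and to preserve the zero tangential trace on $\po$. Denote this interpolant by $\bphi_H \in \Ned_H$. The standard N\'ed\'elec interpolation error estimates on a shape-regular tetrahedral mesh of diameter $\Hcs$ give
\begin{align*}
\N{\bE - \bphi_H}_{{\bf L}^2(\Omega)} &\lesssim \Hcs \, \N{\bE}_{{\bf H}^1(\Omega)}, \\
\N{\curl(\bE - \bphi_H)}_{{\bf L}^2(\Omega)} &\lesssim \Hcs \, \N{\curl\bE}_{{\bf H}^1(\Omega)},
\end{align*}
where the second estimate uses the commuting-diagram property of the N\'ed\'elec interpolant (the interpolated curl equals the Raviart--Thomas interpolant of $\curl\bE$, for which the standard $H^1$ error estimate applies).

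Next, squaring and adding these bounds according to the definition of the $\N{\cdot}_{\weight}$-norm in \eqref{eq:weight}, I would obtain
\begin{equation*}
\N{\bE - \bphi_H}_{\weight} \;\lesssim\; \Hcs \bigl( \N{\curl\bE}_{{\bf H}^1(\Omega)} + \wn \N{\bE}_{{\bf H}^1(\Omega)} \bigr).
\end{equation*}
Finally, I would invoke the regularity estimate \eqref{eq:reg_result2} from the corollary to Assumption \ref{ass:reg}, which applies to the adjoint problem under our hypothesis $\bF \in {\bf L}^2(\Omega) \cap \Hdivo$, yielding
\begin{equation*}
\N{\curl\bE}_{{\bf H}^1(\Omega)} + \wn \N{\bE}_{{\bf H}^1(\Omega)} \;\lesssim\; \left(\frac{\wn^2}{|\abs|}\right) \N{\bF}_{{\bf L}^2(\Omega)}.
\end{equation*}
Combining the two displayed inequalities gives \eqref{eq:ass1}.

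I do not expect any real obstacle here; the only mildly delicate point is the use of a N\'ed\'elec interpolant on a function whose only regularity comes from Assumption \ref{ass:reg}, but the ${\bf H}^1$-regularity of both $\bE$ and $\curl\bE$ is more than enough for the classical interpolation theory to apply. (If one wished to avoid using the interpolant directly, one could equivalently employ a quasi-interpolation operator of Sch\"oberl type, which requires only ${\bf L}^2$-regularity and gives the same orders.) The crucial structural point, which is where the factor $\wn^2/|\abs|$ enters, is that the coercivity-based regularity bound \eqref{eq:reg_result2} for the adjoint problem carries exactly this $\wn$- and $\abs$-dependence.
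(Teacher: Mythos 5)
Your overall strategy (pick a concrete operator mapping into $\Ned_H$, use componentwise first-order estimates, then insert the regularity bound \eqref{eq:reg_result2}) is exactly the structure of the paper's proof, but your main route through the \emph{classical} N\'ed\'elec interpolant has a genuine gap in the first estimate. While the classical interpolant is indeed well defined under $\bE,\curl\bE\in{\bf H}^1(\Omega)$, the estimate
$\N{\bE-\bphi_H}_{\bLtO}\lesssim \Hcs\N{\bE}_{{\bf H}^1(\Omega)}$
is not available for it: the edge degrees of freedom are only controlled through the curl as well, and the standard estimates at this regularity level (e.g.\ \cite[Theorem 5.41]{Mo:03}) have the form
$\N{\bE-\bphi_H}_{\bLtO}\lesssim \Hcs\big(\N{\bE}_{{\bf H}^1(\Omega)}+\N{\curl\bE}_{{\bf H}^1(\Omega)}\big)$.
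This matters quantitatively: in the weighted norm the ${\bf L}^2$ error is multiplied by $\wn$, so the extra $\N{\curl\bE}_{{\bf H}^1(\Omega)}$ term produces $\wn\Hcs\N{\curl\bE}_{{\bf H}^1(\Omega)}$, and after applying \eqref{eq:reg_result2} you would only obtain $\inf_{\bphi_H}\N{\bE-\bphi_H}_{\weight}\lesssim \Hcs\,\wn\,(\wn^2/|\abs|)\N{\bF}_{\bLtO}$, i.e.\ a factor $\wn$ worse than \eqref{eq:ass1}; that loss would propagate into the conditions on $\Hcs$ in the main theorems.

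Your parenthetical remark is in fact the repair, and it is what the paper does: it invokes Sch\"oberl's commuting quasi-interpolation operators \cite[Theorem 5 and Corollary 6]{Sc:01}, which are ${\bf L}^2$-bounded and therefore give the two \emph{separated} estimates you want, $\N{\bE-\bphi_H}_{\bLtO}\lesssim \Hcs|\bE|_{{\bf H}^1(\Omega)}$ and (via the commuting property with the ${\bf H}(\mathrm{div})$ quasi-interpolant) $\N{\curl(\bE-\bphi_H)}_{\bLtO}\lesssim \Hcs|\curl\bE|_{{\bf H}^1(\Omega)}$; combined with \eqref{eq:reg_result2} this yields \eqref{eq:ass1} with the correct $\wn^2/|\abs|$ dependence. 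One small further point of comparison: the paper states this for the lowest-order coarse space and then obtains the result for arbitrary coarse order simply because the lowest-order space is contained in the higher-order ones and the left-hand side of \eqref{eq:ass1} is an infimum, whereas you work directly with the higher-order operator; either is fine once the quasi-interpolant is used, but the quoted estimates must come from the quasi-interpolation construction, not the classical interpolant.
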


\bpf
When $\bQ_H$ is the space of lowest order N\'ed\'elec elements (i.e.~$m=0$ in \S\ref{sec:dis}), the inequality 
\beq\label{eq:JS}
 \inf_{\bphi_H\in \Ned_H} \left(\N{\curl(\bE-\bphi_H)}_{\bLtO} + \wn \N{\bE-\bphi_H}_{\bLtO}\right) \lesssim \Hcs
  \bigg(|\curl\bE|_{{\bf H}^1(\Omega)} + \wn |\bE|_{{\bf H}^1(\Omega)}\bigg)
\eeq
holds by \cite[Theorem 5 and Corollary 6]{Sc:01}. The fact that $\bQ_H$ for $m=0$ is contained in $\bQ_H$ for $m= 1, 2, \ldots$ means that \eqref{eq:JS} holds for $m=0, 1, 2,\ldots$, 
and then the result follows from using 
the $\wn$- and $\abs$-explicit bound \eqref{eq:reg_result2}.
\epf

We now state the key result that allows us to estimate the remainder terms $R_{\abs}^\ell (\bv_h)$.

\begin{lemma}\textbf{\emph{($\wn$- and $\abs$-explicit analogue of \cite[Lemma 4.3]{GoPa:03})}}\label{lem:jay}

\noi (i) For any $\bv_h\in \Ned_h$ and $\bw_h^\ell\in \Ned^\ell_h$,
\beq\label{eq:jay1}
\left|\big((\bI-\proj_{\abs}^{\ell})\bv_h, \bw_h^\ell \big)_{{\bf L}^2(\Omega_\ell)}\right| \lesssim \Hsub \| (\bI-\proj_{\abs}^{\ell}) \bv_h\big\|_{{\bf L}^2(\Omega_\ell)} \big\|\curl \bw_h^\ell\big\|_{{\bf L}^2(\Omega_\ell)}
\eeq
for $\ell= 1,\ldots,N$,

\noi 
(ii) If Assumption \ref{ass:reg} holds, then, given $\wn_0>0$, and for any $\bv_h\in \Ned_h$ and $\bw_H\in \Ned_H$,
\beq\label{eq:jay2}
\left|\big( (\bI-\proj_{\abs}^0)\bv_h, \bw_H \big)_{{\bf L}^2(\Omega)}\right| \lesssim \Hcs \left( \frac{\wn}{| \abs|}\right) 
\Cdual
\big\| 
(\bI-\proj_{\abs}^0) \bv_h\big\|_{\weight} \big\|
\bw_H\big\|_{\weight},
\eeq
for all $\wn\geq \wn_0$.
\end{lemma}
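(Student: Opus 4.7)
For part (i), the plan is to exploit the discrete Helmholtz decomposition \eqref{eq:orthog2} to split $\bw_h^\ell = \bq_h^\ell + \grad \phi_h^\ell$ with $\bq_h^\ell \in \curl_h^\ell \RT_h^\ell$ and $\phi_h^\ell \in W_h^\ell$. Since $\grad \phi_h^\ell \in \Ned_h^\ell$, the Galerkin orthogonality from the definition \eqref{eq:defQi} of $\proj_{\abs}^{\ell}$ applies with this test function, and $\curl \grad \phi_h^\ell = \bze$ kills the curl-term in $a_\abs$, yielding
\[
0 \,=\, a_\abs\big((\bI - \proj_{\abs}^{\ell})\bv_h, \grad \phi_h^\ell\big) \,=\, -(\wn^2 + \ri \abs)\big((\bI - \proj_{\abs}^{\ell})\bv_h, \grad \phi_h^\ell\big)_{{\bf L}^2(\Omega_\ell)}.
\]
Only the $\bq_h^\ell$ piece contributes, so Cauchy--Schwarz combined with the Poincar\'e--Friedrichs estimate of Lemma \ref{lem:PF}, together with $\curl \bq_h^\ell = \curl \bw_h^\ell$, delivers \eqref{eq:jay1}.

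For part (ii), the same trick applied on the coarse grid writes $\bw_H = \bq_H + \grad \phi_H$ with $\bq_H$ in the ``curl-part'' of $\Ned_H$ and $\phi_H \in W_H$, kills the gradient component using \eqref{eq:defQi0}, and reduces matters to estimating $\big((\bI - \proj_{\abs}^{0})\bv_h, \bq_H\big)_{{\bf L}^2(\Omega)}$. The $\bLtO$-orthogonality of the Helmholtz decomposition and $\curl \grad = \bze$ provide the useful bounds $\|\bq_H\|_{{\bf L}^2(\Omega)} \leq \|\bw_H\|_{{\bf L}^2(\Omega)} \leq \wn^{-1}\|\bw_H\|_{\weight}$ and $\|\curl \bq_H\|_{{\bf L}^2(\Omega)} = \|\curl \bw_H\|_{{\bf L}^2(\Omega)} \leq \|\bw_H\|_{\weight}$. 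Because the duality argument used next needs a divergence-free source in order to invoke Lemma \ref{lem:approx}, we use Lemma \ref{lem:magic} to replace $\bq_H$ by its divergence-free lift $\bS \bq_H \in \HocO$, paying the controllable error $\|\bq_H - \bS \bq_H\|_{{\bf L}^2(\Omega)} \lesssim h\|\curl \bw_H\|_{{\bf L}^2(\Omega)}$.

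The core of the proof is a standard duality argument. Let $\bE$ solve the adjoint BVP \eqref{eq:BVPadj} with right-hand side $\bS \bq_H$; since $\dive \bS \bq_H = 0$, Assumption \ref{ass:reg} applies and Lemma \ref{lem:approx} gives $\inf_{\bphi_H \in \Ned_H}\|\bE - \bphi_H\|_{\weight} \lesssim \Hcs (\wn^2/|\abs|) \|\bS \bq_H\|_{{\bf L}^2(\Omega)}$. The adjoint-sesquilinear identity \eqref{eq:adj_ses} then yields $a_\abs\big((\bI - \proj_\abs^0)\bv_h, \bE\big) = \big((\bI - \proj_\abs^0)\bv_h, \bS \bq_H\big)_{{\bf L}^2(\Omega)}$; combining this with Galerkin orthogonality of $(\bI - \proj_\abs^0)\bv_h$ against $\Ned_H$ and the continuity Lemma \ref{lem:cont} gives
\[
\big|\big((\bI - \proj_\abs^0)\bv_h, \bS \bq_H\big)_{{\bf L}^2(\Omega)}\big| \,\lesssim\, \Hcs\,\frac{\wn^2}{|\abs|}\,\|(\bI - \proj_\abs^0)\bv_h\|_{\weight}\,\|\bS \bq_H\|_{{\bf L}^2(\Omega)}.
\]
Inserting $\|\bS \bq_H\|_{{\bf L}^2(\Omega)} \lesssim \wn^{-1}\|\bw_H\|_{\weight}$ (valid under the mesh-resolves-wavelength condition $\wn h \lesssim 1$, which absorbs the $h\|\curl\bw_H\|_{{\bf L}^2(\Omega)}$ term from the lift) produces the main contribution of size $\Hcs \wn/|\abs|$ in \eqref{eq:jay2}, while the remainder $\big((\bI - \proj_\abs^0)\bv_h, \bq_H - \bS \bq_H\big)_{{\bf L}^2(\Omega)}$, handled directly by Cauchy--Schwarz and Lemma \ref{lem:magic}, contributes $\lesssim (h/\wn)\|(\bI - \proj_\abs^0)\bv_h\|_{\weight}\|\bw_H\|_{\weight}$, which is absorbed into the main term thanks to $h \leq \Hcs$ and $|\abs| \lesssim \wn^2$. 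The principal obstacle throughout is bookkeeping: carefully extracting the correct powers of $\wn$ and $|\abs|$ from the duality chain and verifying that the divergence-free correction remains subleading under the standing hypotheses.
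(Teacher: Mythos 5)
Part (i) is essentially the paper's argument: split $\bw_h^\ell$ via \eqref{eq:orthog2}, kill the gradient piece with Galerkin orthogonality ($\curl\grad=\bze$), then Cauchy--Schwarz plus the Poincar\'e--Friedrichs inequality of Lemma~\ref{lem:PF}.

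Part (ii), however, takes a genuinely different route, which works and is arguably cleaner. The paper additionally decomposes the \emph{fine-mesh} error $\be_{h,0}=(\bI-\proj^0_\abs)\bv_h$ as $\curl_h\br_h+\grad\phi_h$, estimates the cross term $(\grad\phi_h,\curl_H\bz_H)$ by replacing $\curl_H\bz_H$ with its lift, and then runs a duality argument with source $\bS(\curl_h\br_h)$ to bound $\|\curl_h\br_h\|_{\bLtO}$, which requires the algebraic cancellation identity \eqref{eq:EJ11} before Galerkin orthogonality can be invoked. You instead keep $\be_{h,0}$ whole, place the divergence-free lift on the \emph{coarse} function $\bq_H=\curl_H\bz_H$, and run the duality directly with source $\bS\bq_H$; this bypasses the second Helmholtz decomposition and the cancellation identity, and after Galerkin orthogonality the continuity estimate plus Lemma~\ref{lem:approx} closes the argument in one step. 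Two small slips, neither fatal: first, the coarse-mesh analogue of \eqref{eq:magic} gives $\|\bq_H-\bS\bq_H\|_{\bLtO}\lesssim \Hcs\|\curl\bq_H\|_{\bLtO}$, not $h$; second, the auxiliary hypothesis $\wn h\lesssim 1$ you invoke for $\|\bS\bq_H\|_{\bLtO}\lesssim\wn^{-1}\|\bw_H\|_{\weight}$ is unnecessary and not in the lemma's hypotheses. Since $\bq_H-\bS\bq_H$ is curl-free and vanishes tangentially, it is a gradient of an $H_0^1$ function, which is $\bLtO$-orthogonal to the divergence-free $\bS\bq_H$; hence $\|\bS\bq_H\|_{\bLtO}\leq\|\bq_H\|_{\bLtO}\leq\wn^{-1}\|\bw_H\|_{\weight}$ holds unconditionally, and the $\Hcs$-sized remainder is absorbed using only $|\abs|\lesssim\wn^2$, recovering exactly \eqref{eq:jay2} as stated.
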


The proof of (i) is the same as in \cite[Lemma 4.3]{GoPa:03}, but since it is so short we include it here.
The proof of (ii) begins in the same way as in \cite[Lemma 4.3]{GoPa:03}, but then deviates substantially.

\

\bpf[Proof of Lemma \ref{lem:jay} (i)]
Given $\bv_h\in \Ned_h$, let $\be_{h,\ell}:= (\bI-\proj_{\abs}^{\ell})\bv_h$ for $\ell=1,\ldots,N$ (we use a subscript $\ell$, rather than a superscript, on $\be_{h,\ell}$ since it is not necessarily in $\Ned^\ell_h$).
We first show that
\beq\label{eq:EJ1}
\left(\be_{h,\ell}, \grad \phi^\ell_h \right)_{{\bf L}^2(\Omega^\ell)} =0 \quad\tfa \phi^\ell_h \in W^\ell_h \,\,\tfor \ell=1,\ldots,N,
\eeq
where $W_h^\ell$ is defined by \eqref{eq:Whell}.
Indeed, from the definition of $\proj_{\abs}^{\ell}$ \eqref{eq:defQi}, 
\beq\label{eq:EJ2}
a_\abs( \be_{h,\ell}, \bw_h^\ell) = 0 \quad\tfa \bw_h^\ell \in \Ned^\ell_h.
\eeq
Since 
\beqs
a_\abs( \be_{h,\ell}, \grad \phi^\ell_h) = -(\wn^2+ \ri \abs)\left( \be_{h,\ell}, \grad \phi^\ell_h\right)_{{\bf L}^2(\Omega^\ell)},
\eeqs
the result \eqref{eq:EJ1} follows from \eqref{eq:EJ2} with $\bw_h^\ell = \grad \phi^\ell_{h}$ (which is indeed in $\Ned^\ell_h$ by the decomposition \eqref{eq:orthog2}).

Given $\bw_{h}^\ell \in \Ned_h^\ell$, by \eqref{eq:orthog2} there exist  $\bz_h^\ell \in \RT^\ell_h$ and $\psi^\ell_h \in W_h^\ell$ such that
\beq\label{eq:EJ3}
\bw_h^\ell=\curl_h^\ell \bz_h^\ell + \grad \psi_h^\ell.
\eeq
Then, by \eqref{eq:EJ1}, 
\begin{align*}
\big|\left( \be_{h,\ell}, \bw_h^\ell\right)_{{\bf L}^2(\Omega_\ell)}\big| = \big|\left( \be_{h,\ell}, \curl_h^\ell \bz_h^\ell\right)_{{\bf L}^2(\Omega_\ell)} \big|
& \leq \,\big\|\be_{h,\ell}\big\|_{{\bf L}^2(\Omega_\ell)}\big\|\curl_h^\ell \bz_h^\ell\big\|_{{\bf L}^2(\Omega_\ell)}
\quad\text{ by Cauchy--Schwarz},
\\
&\lesssim \Hsub\,\big\|\be_{h,\ell}\big\|_{{\bf L}^2(\Omega_\ell)}\N{\curl \curl_h^\ell \bz_h^\ell}_{{\bf L}^2(\Omega_\ell)},
\end{align*}
by the Poincar\'e inequality \eqref{eq:PF1} (since $ \curl_h^\ell \bz_h^\ell \in \curl_h \RT^\ell_h$). The result \eqref{eq:jay1} then follows from 
by observing that 
$\curl \bw_h^\ell = \curl \curl_h^\ell \bz_h^\ell$ from \eqref{eq:EJ3}. 
\epf

\bpf[Proof Lemma \ref{lem:jay} (ii)]
Given $\bv_h\in \Ned_h$, let $\be_{h,0}:= (\bI-\proj_{\abs}^{0})\bv_h$. Exactly as in the proof of (i) we have that 
\beq\label{eq:EJ4}
\left(\be_{h,0}, \grad \phi_H \right)_{{\bf L}^2(\Omega)} =0 \quad\tfa \phi_H \in W_H.
\eeq
From the decomposition \eqref{eq:orthog}, there exist  $\br_h\in \RT_h$ and $\phi_h\in W_h$ such that
\beq\label{eq:EJ5}
\be_{h,0} = \curl_h \br_h + \grad \phi_h.
\eeq
Similarly, given $\bw_H\in \Ned_H$, there exist $\bz_H \in \RT_H$ and $\psi_H\in W_H$ such that
\beq\label{eq:EJ6}
\bw_H = \curl_H \bz_H + \grad \psi_H.
\eeq
Then, by \eqref{eq:EJ4}, we have 
\begin{align}\nonumber
\left( \be_{h,0}, \bw_H \right)_{\bLtO} &= \left(\be_{h,0} , \curl_H \bz_H\right)_\bLtO ,\\
&= \left( \curl_h \br_h, \curl_H \bz_H\right)_\bLtO + \left(\grad \phi_h, \curl_H \bz_H\right)_\bLtO=: I_1 + I_2.\label{eq:EJ6a}
\end{align}
We estimate $I_1$ and $I_2$ separately; we begin by estimating $I_2$, since this is slightly easier.

In Lemma \ref{lem:magic} we recalled the properties of $\bS : \curl_h \RT_h \rightarrow \HocO$, but analogous properties hold for $\bS : \curl_H \RT_H \rightarrow \HocO$.
We let $\bq= \bS(\curl_H \bz_H)$, and we seek to introduce $\curl_H \bz_H - \bq$ in the inner product defining $I_2$ so that we can obtain a power of $\Hcs$ via \eqref{eq:magic}. Indeed, 
\begin{align}\nonumber
I_2 &=\big(  \grad \phi_h, \curl_H\bz_H - \bq\big)_\bLtO + \big( \grad \phi_h, \bq\big)_\bLtO,\\ \nonumber
&= \big(  \grad \phi_h, \curl_H\bz_H - \bq\big)_\bLtO - \big( \phi_h, \dive \bq\big)_{\LtO} \quad\text{since } \phi_h\in H_0^1(\Omega),\\ \nonumber
&= \big(  \grad \phi_h, \curl_H\bz_H - \bq\big)_\bLtO 
\qquad\qquad\quad\text{since } \dive \bq =0 \text{ by Lemma \ref{lem:magic}},\\ \nonumber
&\leq\N{\grad \phi_h}_\bLtO \N{\curl_H\bz_H -\bq}_\bLtO \quad\text{ by Cauchy--Schwarz},\\ \nonumber
& \leq \normeps\N{\be_{h,0}}_{\bLtO}\N{\curl_H\bz_H -\bq}_\bLtO \quad\qquad\text{ by the orthogonal decomposition \eqref{eq:EJ5}},\\ \nonumber
& \lesssim \Hcs \normeps\N{\be_{h,0}}_{\bLtO} \N{\curl \curl_H\bz_H}_{\bLtO} \quad\text{ by \eqref{eq:magic}},\\
& = \Hcs \normeps\N{\be_{h,0}}_{\bLtO} \N{\curl \bw_{H}}_{\bLtO}  \,\,\,\quad\qquad\text{ by the orthogonal decomposition \eqref{eq:EJ6}.}\label{eq:EJ7}
\end{align}
We now estimate $I_1$. 
The main work is in using a duality argument to show that
\beq\label{eq:EJ8}
\N{\curl_h \br_h}_\bLtO \lesssim\Hcs \kseb 
\N{\be_{h,0}}_{\weight}.
\eeq
(Recalling \eqref{eq:EJ5}, we see that \eqref{eq:EJ8} bounds one component of the ``error" $\be_{h,0}$ in ${\bf L}^2(\Omega)$ by the $\curlt, \wn$-norm of the whole ``error" multiplied by $\Hcs k^2/|\abs|$).
Assuming we have \eqref{eq:EJ8}, we find that
\begin{align}\nonumber
I_1 &\lesssim \Hcs \left( \frac{\wn^2}{|\abs|}\right) \N{\be_{h,0}}_{\weight} \N{\curl_H \bz_H}_{\bLtO},\\
& \lesssim \Hcs\left( \frac{\wn^2}{|\abs|}\right) \, \,\N{\be_{h,0}}_{\weight} \N{\bw_H}_{\bLtO}
\quad\text{ by the orthogonal decomposition \eqref{eq:EJ6}}.\label{eq:EJ9}
\end{align}
Then, by using \eqref{eq:EJ7} and \eqref{eq:EJ9} in \eqref{eq:EJ6a}, and also by recalling that $|\abs|\lesssim \wn^2$, we have
\begin{align*}
\big|\left( \be_{h,0}, \bw_H \right)_{\bLtO}\big| &\lesssim 
\Hcs\left( \frac{\wn^2}{|\abs|}\right) \,
 \,
\N{\be_{h,0}}_{\weight} \N{\bw_H}_{\bLtO}
+ \Hcs \N{\be_{h,0}}_{\bLtO} \N{\curl \bw_{H}}_{\bLtO}\\
& \lesssim\Hcs\left( \frac{\wn}{|\abs|}\right)
\N{\be_{h,0}}_{\weight} \N{\bw_{H}}_{\weight}
\end{align*}
which is the result \eqref{eq:jay2}. 

To finish the proof, we only need to establish \eqref{eq:EJ8}.
We use the result \eqref{eq:ass1} about coarse-grid approximability of the adjoint problem, and so we need to consider a BVP with divergence-free source. Let $\dual: = \bS(\curl_h \br_h)$. Then
\beq\label{eq:EJ9a}
\N{\dual- \curl_h \br_h}_{\bLtO} \lesssim h \N{\curl \curl_h \br_h}_\bLtO = h \N{\curl \be_{h,0}}_{\bLtO}
\eeq
by \eqref{eq:magic} and \eqref{eq:EJ5}. Furthermore  $\dive \dual =0$ and
\beq\label{eq:EJ10}
\curl \dual = \curl \curl_h \br_h.
\eeq
We now set up the standard duality argument for $\dual$: let $\bu\in \HocO$ be the solution of the variational problem
\beq\label{eq:vpduality}
a_\abs(\bv, \bu) = (\bv, \dual)_\bLtO \quad\tfa \bv \in \HocO;
\eeq
i.e.~from \eqref{eq:adj_ses} $\bu$ is the solution of the adjoint variational problem \eqref{eq:adjoint_form} with source term $\dual$. Therefore,
\beq\label{eq:EJ10a}
\N{\dual}^2_\bLtO = a_\abs(\dual, \bu).
\eeq
We now want to use the Galerkin orthogonality property of $\be_{h,0}$:
\beq\label{eq:EJ10b}
a_\abs( \be_{h,0}, \bphi_H) = 0 \quad \tfa \bphi_H \in \Ned_H.
\eeq
Our goal is therefore to create an $a_\abs(\be_{h,0}, \bu)$ on the right-hand side of \eqref{eq:EJ10a}, so that we can replace it by 
$a_\abs(\be_{h,0}, \bu-\bphi_H)$ for an arbitrary $\bphi_H^0\in \Ned_H$, use continuity of $a_\abs(\cdot,\cdot)$ \eqref{eq:cont}, and then use the \eqref{eq:ass1}.
We claim that
\beq\label{eq:EJ11}
a_\abs(\dual,\bu) = a_\abs(\be_{h,0}, \bu) - (\wn^2+ \ri\abs)\left(\big(\dual - \curl_h \br_h\big), \bu\right)_{\bLtO}.
\eeq
Indeed, from the definition of $a_\abs(\cdot,\cdot)$ \eqref{eq:Helmholtzvf_intro}, the equation \eqref{eq:EJ10}, and the decomposition \eqref{eq:EJ5}, 
we have
\begin{align}\nonumber
a_\abs(\dual,\bu) &= \big( \curl \curl_h \br_h, \curl \bu\big)_\bLtO  - (\wn^2 + \ri \abs)\left(\dual, \bu\right)_\bLtO,\\ \nonumber
&= \big( \curl \be_{h,0}, \curl \bu\big)_\bLtO  -(\wn^2 + \ri \abs)\left( \be_{h,0}, \bu\right)_\bLtO
- (\wn^2 + \ri \abs)\left( \big(\dual-\be_{h,0}), \bu\right)_\bLtO,\\
&= a_\abs(\be_{h,0}, \bu) - (\wn^2 + \ri \abs)\left(\big(\dual - \curl \br_h - \grad \phi_h\big), \bu\right)_\bLtO.\label{eq:EJ12}
\end{align}
Using \eqref{eq:vpduality} with $\bv= \grad \phi_h$, recalling that $\phi_h\in W_h\subset H_0^1(\Omega)$ and $\dive \dual=0$ (and thus $(\grad \phi_h , \dual)_\bLtO=0$), we have
\beqs
\left( \grad \phi_h, \bu\right)_\bLtO=0.
\eeqs
Using this in \eqref{eq:EJ12} we find \eqref{eq:EJ11}.

Therefore, combining \eqref{eq:EJ10a}, \eqref{eq:EJ10b}, and \eqref{eq:EJ11}, and then using continuity of $a_\abs(\cdot,\cdot)$ \eqref{eq:cont},  we get
\begin{align}\nonumber
\N{\dual}^2_\bLtO &= a_\abs(\be_{h,0}, \bu- \bphi_H) -(\wn^2+ \ri\abs)\left(\big(\dual - \curl_h \br_h\big), \bu\right)_{\bLtO},\\
&\lesssim \Ccont \N{\be_{h,0}}_{\weight} \N{\bu-\bphi_H}_{\weight} + \wn^2\N{\dual - \curl_h \br_h}_{\bLtO}\big\|\bu\big\|_{\bLtO}.\label{eq:EJ13}
\end{align}
From the result of the Lax--Milgram theorem \eqref{eq:LM} applied to the variational problem \eqref{eq:vpduality}, we have
\beqs
\big\|\bu\big\|_{\bLtO}\lesssim 
\left(\frac{1}{|\abs|}\right)\N{\dual}_{\bLtO}.
\eeqs
Using this last estimate in \eqref{eq:EJ13}, along with \eqref{eq:EJ9a} and \eqref{eq:ass1}, we have 
\begin{align}
\N{\dual}_\bLtO
\lesssim\kseb\bigg[\Hcs
 \N{\be_{h,0}}_{\weight}
+ h
 \N{\curl\be_{h,0}}_{\bLtO}\bigg]\lesssim\kseb\left[\Hcs
+ h
\right] \N{\be_{h,0}}_{\weight}.
\label{eq:EJ14}
\end{align}
Now, by \eqref{eq:EJ9a}, 
\beqs
\N{\curl_h\br_h}_{\bLtO} \leq \N{\dual}_{\bLtO} + h \N{\be_{h,0}}_{\weight}.
\eeqs
Combining this with \eqref{eq:EJ14}, and using the crude estimate $h\leq \Hcs$, we obtain \eqref{eq:EJ8} and the proof is complete.
\epf

\subsection{Bounding the field of values away from the origin}\label{sec:44}
  
We now use the results from \S\ref{sec:43} in conjunction with Lemma \ref{lem:combined} to bound the field of values of $\proj_{\abs}$ away from the origin.

\begin{lemma}\textbf{\emph{(Bound on $R_{\abs}^0 (\bv_h)$)}}\label{lem:coarse}
Under Assumption \ref{ass:reg}, 
given $\wn_0>0$,
\beq\label{eq:E5}
\big|R_{\abs}^0 (\bv_h)\big| \lesssim \wn \Hcs \kseb\Cdual \left[ \kseb^\gamma \N{\proj_{\abs}^0\bv_h}^2_{\weight} + \kseb^{-\gamma}\N{\bv_h}^2_{\weight}\right],
\eeq
for all $\wn\geq \wn_0$, for all $\gamma\geq 0$, and for all $\bv_h\in \Ned_h$.
\end{lemma}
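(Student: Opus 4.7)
The plan is to start from the explicit formula \eqref{eq:boundE} for $R_{\abs}^0(\bv_h)$, insert the duality-type estimate from Lemma \ref{lem:jay}(ii), and then absorb the unwanted $(\bI - \proj_{\abs}^0)\bv_h$ factor using a triangle inequality followed by a weighted Young's inequality.

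First, by \eqref{eq:boundE},
\beqs
R_{\abs}^0(\bv_h) = (2\wn^2 + \ri \abs) \big((\bI-\proj_{\abs}^0)\bv_h, \proj_{\abs}^0\bv_h\big)_{\bLtO}.
\eeqs
Since $|\abs|\lesssim \wn^2$ by \eqref{eq:alpha}, the prefactor satisfies $|2\wn^2 + \ri\abs| \lesssim \wn^2$. Observing that $\proj_{\abs}^0\bv_h \in \Ned_H$, I would apply Lemma \ref{lem:jay}(ii) with $\bw_H = \proj_{\abs}^0\bv_h$ to obtain
\beqs
\big|\big((\bI-\proj_{\abs}^0)\bv_h, \proj_{\abs}^0\bv_h\big)_{\bLtO}\big| \lesssim \Hcs \left(\frac{\wn}{|\abs|}\right) \Cdual \N{(\bI-\proj_{\abs}^0)\bv_h}_{\weight} \N{\proj_{\abs}^0\bv_h}_{\weight}.
\eeqs
Combining these two steps yields
\beq\label{eq:plan1}
|R_{\abs}^0(\bv_h)| \lesssim \wn\Hcs \kseb \Cdual\, \N{(\bI-\proj_{\abs}^0)\bv_h}_{\weight} \N{\proj_{\abs}^0\bv_h}_{\weight},
\eeq
which explains the $\wn\Hcs\kseb\Cdual$ factor in the desired estimate.

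The remaining task is to convert \eqref{eq:plan1} into a sum of squared norms of the desired form. The triangle inequality gives
\beqs
\N{(\bI-\proj_{\abs}^0)\bv_h}_{\weight} \leq \N{\bv_h}_{\weight} + \N{\proj_{\abs}^0\bv_h}_{\weight},
\eeqs
so \eqref{eq:plan1} is controlled by
\beqs
\wn\Hcs\kseb\Cdual\left[ \N{\bv_h}_{\weight}\N{\proj_{\abs}^0\bv_h}_{\weight} + \N{\proj_{\abs}^0\bv_h}_{\weight}^2\right].
\eeqs
To the cross-term I would apply the weighted Young's inequality $ab \leq \tfrac{1}{2}(\mu^{-1} a^2 + \mu b^2)$ with $\mu = \kseb^\gamma$, producing
\beqs
\N{\bv_h}_{\weight}\N{\proj_{\abs}^0\bv_h}_{\weight} \leq \tfrac{1}{2}\kseb^{-\gamma}\N{\bv_h}_{\weight}^2 + \tfrac{1}{2}\kseb^\gamma\N{\proj_{\abs}^0\bv_h}_{\weight}^2.
\eeqs
Finally, since $|\abs|\lesssim \wn^2$ implies $\kseb \gtrsim 1$, we have $\N{\proj_{\abs}^0\bv_h}_{\weight}^2 \lesssim \kseb^\gamma \N{\proj_{\abs}^0\bv_h}_{\weight}^2$ for any $\gamma \geq 0$, so the pure square term is absorbed into the $\kseb^\gamma\N{\proj_{\abs}^0\bv_h}_{\weight}^2$ contribution. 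Putting everything together delivers \eqref{eq:E5}.

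The only delicate point, which is not really an obstacle once Lemma \ref{lem:jay}(ii) is in hand, is choosing to apply the duality bound with $\bw_H = \proj_{\abs}^0 \bv_h$ (rather than trying to estimate $\N{(\bI-\proj_{\abs}^0)\bv_h}_{\bLtO}$ directly, which would cost an extra factor of $\kseb$ via Aubin--Nitsche-type arguments). All the heavy lifting regarding $\wn$-explicit regularity and the discrete Helmholtz decomposition has already been absorbed into Lemma \ref{lem:jay}, so the present proof reduces to algebra with Young's inequality.
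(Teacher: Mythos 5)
Your proof is correct and follows essentially the same route as the paper's: start from \eqref{eq:boundE}, insert Lemma \ref{lem:jay}(ii) with $\bw_H = \proj_{\abs}^0\bv_h$, use the triangle inequality to split $\N{(\bI-\proj_{\abs}^0)\bv_h}_{\weight}$, apply the weighted Young inequality \eqref{eq:Cauchy} with weight $\kseb^\gamma$, and absorb the pure square term using $\kseb \gtrsim 1$. The paper's proof is identical in both structure and algebra.
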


\bpf
Using (i) the bound \eqref{eq:jay2} in the expression for $R_{\abs}^0 (\bv_h)$ \eqref{eq:boundE}, (ii) the triangle inequality, (iii) 
the inequality 
\beq\label{eq:Cauchy}
2ab \leq \epsilon a^2 + \frac{1}{\epsilon}b^2 \quad\tfor a, b, \epsilon>0,
\eeq
and (iv) the fact that $|\abs|\lesssim \wn^2$, 
we obtain
\begin{align*}\nonumber
\big|R_{\abs}^0 (\bv_h)\big|& \lesssim  \wn \Hcs \kseb\Cdual\N{(\bI-\proj_{\abs}^0)\bv_h}_{\weight} \N{\proj_{\abs}^0\bv_h}_{\weight},\\
&\lesssim  \wn \Hcs \kseb \Cdual\left[
\N{\proj_{\abs}^0\bv_h}^2_{\weight} + \N{\proj_{\abs}^0\bv_h}_{\weight}\N{\bv_h}_{\weight}\right],\\
&\lesssim  \wn \Hcs \kseb \Cdual\left[ \N{\proj_{\abs}^0\bv_h}^2_{\weight} + \kseb^{\gamma} \N{\proj_{\abs}^0\bv_h}_{\weight}^2 + \kseb^{-\gamma}\N{\bv_h}^2_{\weight}\right]\ , 
\end{align*}
for any $\gamma \geq 0 $. Since  $\veps\lesssim \wn^2$,  \eqref{eq:E5} follows.  
\epf

\begin{lemma}\textbf{\emph{(Bound on $\sum R_{\abs}^\ell (\bv_h)$)}}\label{lem:local}
For any $\gamma'\geq 0$ and any $\bv_h\in \Ned_h$,
\beq\label{eq:E7}
\sum_{\ell= 1}^N \big|R_{\abs}^\ell (\bv_h)\big| \lesssim  \wn \Hsub  \left[
\kseb^{\gamma'} \sum_{\ell=1}^N \N{\proj_{\abs}^\ell\bv_h}^2_{\weight} + \kseb^{-\gamma'} \N{\bv_h}^2_{\weight}\right].
\eeq
\end{lemma}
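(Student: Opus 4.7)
The plan is to apply Lemma \ref{lem:jay}(i) to each term and then assemble. Starting from the identity \eqref{eq:boundE}, for each $\ell = 1,\ldots,N$ I would set $\bw_h^\ell = \proj_{\abs}^{\ell} \bv_h \in \Ned_h^\ell$ in \eqref{eq:jay1}, and use $|2\wn^2 + \ri\abs| \lesssim \wn^2$ (which follows from the standing assumption $|\abs| \lesssim \wn^2$) to obtain
\beqs
\big|R_{\abs}^\ell(\bv_h)\big| \;\lesssim\; \wn^2 \Hsub \, \big\|(\bI - \proj_{\abs}^{\ell})\bv_h\big\|_{\bLtO(\Omega_\ell)} \,\big\|\curl \proj_{\abs}^{\ell}\bv_h\big\|_{\bLtO(\Omega_\ell)}.
\eeqs

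Next, I would pass from $\bLtO(\Omega_\ell)$ norms to local $\weight$-norms using the elementary inequalities $\|\cdot\|_{\bLtO(\Omega_\ell)} \leq \wn^{-1}\|\cdot\|_{\weight,\Omega_\ell}$ and $\|\curl \,\cdot\|_{\bLtO(\Omega_\ell)} \leq \|\cdot\|_{\weight,\Omega_\ell}$, where $\|\bv\|_{\weight,\Omega_\ell}^2 := \|\curl \bv\|_{\bLtO(\Omega_\ell)}^2 + \wn^2 \|\bv\|_{\bLtO(\Omega_\ell)}^2$. Since $\proj_{\abs}^{\ell} \bv_h$ is supported in $\Omega_\ell$, its local and global $\weight$-norms coincide, and the triangle inequality gives $\|(\bI - \proj_{\abs}^{\ell})\bv_h\|_{\weight,\Omega_\ell} \leq \|\bv_h\|_{\weight,\Omega_\ell} + \|\proj_{\abs}^{\ell}\bv_h\|_{\weight}$. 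Combined with the previous display this yields
\beqs
\big|R_{\abs}^\ell(\bv_h)\big| \;\lesssim\; \wn \Hsub \Big[ \big\|\proj_{\abs}^{\ell}\bv_h\big\|_{\weight}^2 + \big\|\bv_h\big\|_{\weight,\Omega_\ell}\big\|\proj_{\abs}^{\ell}\bv_h\big\|_{\weight}\Big].
\eeqs

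Then, as in the proof of Lemma \ref{lem:coarse}, I would apply the weighted Young's inequality \eqref{eq:Cauchy} with weight $\epsilon = (\wn^2/|\abs|)^{\gamma'}$ to the cross term; since $|\abs|\lesssim \wn^2$ forces $(\wn^2/|\abs|)^{\gamma'} \geq c >0$ for all $\gamma' \geq 0$, the leading $\|\proj_{\abs}^{\ell}\bv_h\|_{\weight}^2$ is absorbed into the $(\wn^2/|\abs|)^{\gamma'}\|\proj_{\abs}^{\ell}\bv_h\|_{\weight}^2$ term. This gives, for every $\ell = 1,\ldots,N$,
\beqs
\big|R_{\abs}^\ell(\bv_h)\big| \;\lesssim\; \wn \Hsub \left[ \kseb^{\gamma'} \big\|\proj_{\abs}^{\ell}\bv_h\big\|_{\weight}^2 + \kseb^{-\gamma'} \big\|\bv_h\big\|_{\weight,\Omega_\ell}^2\right].
\eeqs

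Finally, summing over $\ell = 1,\ldots,N$, the crucial step is controlling $\sum_\ell \|\bv_h\|_{\weight,\Omega_\ell}^2$ by the global $\|\bv_h\|_{\weight}^2$. This follows from the finite overlap assumption \eqref{eq:finoverlap}: each point of $\Omega$ lies in at most $\#\Lambda(\ell) \lesssim 1$ subdomains, so $\sum_{\ell=1}^N \|\bv_h\|_{\weight,\Omega_\ell}^2 \lesssim \|\bv_h\|_{\weight}^2$. This yields \eqref{eq:E7}. There is no genuine obstacle here; the only subtle point is to keep the estimate \emph{local} on the right-hand side (i.e.\ $\|\bv_h\|_{\weight,\Omega_\ell}$ rather than $\|\bv_h\|_{\weight}$) before summing, otherwise one picks up an unwanted factor of $N$. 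The structure of the argument parallels that of Lemma \ref{lem:coarse}, but uses the local Poincar\'e--Friedrichs based estimate \eqref{eq:jay1} in place of the duality-based estimate \eqref{eq:jay2}, which is why the factor $\wn \Hsub$ appears here with no extra $\wn^2/|\abs|$ factor.
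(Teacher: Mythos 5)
Your proposal is correct and follows essentially the same route as the paper's proof: apply \eqref{eq:jay1} with $\bw_h^\ell = \proj_{\abs}^{\ell}\bv_h$ to \eqref{eq:boundE}, split $(\bI-\proj_{\abs}^{\ell})\bv_h$ by the triangle inequality, use Young's inequality \eqref{eq:Cauchy} with weight $\kseb^{\gamma'}$ (absorbing the plain $\N{\proj_{\abs}^{\ell}\bv_h}_{\weight}^2$ term via $|\abs|\lesssim \wn^2$), and sum using the finite-overlap assumption \eqref{eq:finoverlap}. The only cosmetic difference is that you pass to local $\weight$-norms before applying Young's inequality, whereas the paper keeps $\wn\N{\bv_h}_{{\bf L}^2(\Omega_\ell)}$ explicit until the final summation; the estimates are identical.
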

\begin{proof} 
Using the bound \eqref{eq:jay1} and the triangle inequality, we find
\begin{align*}
\big| R_{\abs}^\ell(\bv_h)\big|&\lesssim  \wn^2\Hsub  \big\|(\bI-\proj_{\abs}^{\ell})\bv_h\big\|_{{\bf L}^2(\Omega_\ell)} \big\|\curl (\proj_{\abs}^{\ell}\bv_h)\big\|_{{\bf L}^2(\Omega_l)},\\
&\lesssim \wn^2\Hsub   \left[ \big\|\bv_h\big\|_{{\bf L}^2(\Omega_\ell)}\big\|\curl (\proj_{\abs}^{\ell}\bv_h)\big\|_{{\bf L}^2(\Omega_{\ell})}
+\big\|\proj_{\abs}^{\ell}\bv_h\big\|_{{\bf L}^2(\Omega_l)} \big\|\curl(\proj_{\abs}^{\ell}\bv_h)\big\|_{{\bf L}^2(\Omega_l)} \right],\\
&  \lesssim \wn \Hsub  \left[ \wn\big\|\bv_h\big\|_{{\bf L}^2(\Omega_\ell)} \big\|\proj_{\abs}^{\ell}\bv_h\big\|_{\weight}+\N{\proj_{\abs}^{\ell}\bv_h}_{\weight}^2 \right]\ 
\end{align*}
(where the $\curlt,\wn$-norm is over  the support of 
$\proj_{\abs}^{\ell}\bv_h$, which is $\Omega_\ell$).
Using the inequality \eqref{eq:Cauchy}
 and the fact that $|\abs|\lesssim \wn^2$, 
we obtain
\beqs
\big|R_{\abs}^\ell(\bv_h)\big|\lesssim \wn \Hsub  \left[ \kseb^{\gamma'} \N{\proj_{\abs}^{\ell}\bv_h}^2_{\weight} + \wn^2 \kseb^{-\gamma'} \N{\bv_h}^2_{{\bf L}^2(\Omega_\ell)}\right], 
\eeqs
with $\gamma'\geq0$. Summing from $\ell=1$ to $N$, 
and using the finite-overlap property \eqref{eq:finoverlap},  gives \eqref{eq:E7}.
\end{proof}

We now obtain the bound on the field of values of $\proj_{\abs}$. 

\begin{theorem}\textbf{\emph{(Bounding the field of values away from the origin)}} \label{thm:final} 
If Assumption \ref{ass:reg} holds, given $\wn_0>0$, there exists a  constant $\cC_1>0$ 
such that if $\wn\geq\wn_0$ and 
\beq\label{eq:E20}
\max\left\{ (\wn\Hsub)\Csub,\  (\wn \Hcs)  \kseb \Ccs \right\}  \ \leq \  
\cC_1 \bigg(1 + \left(\frac{\Hcs}{\delta}\right)^2\bigg)^{-1}  \left(\frac{\vert \abs \vert }{\wn^2}\right), 
\eeq
then
\beq\label{eq:E12}
\frac{\vert (\bv_h, \proj_\abs \bv_h )_{\weight} \vert }
{\Vert \bv_h \Vert_{\weight}^2 }\  \gtrsim \  \bigg(1+ \left(\frac{\Hcs}{\delta}\right)^2\bigg)^{-1}\left(\frac{\vert \abs\vert }{\wn^2} \right)^{2}\quad \text{for all }\,  \bv_h \in \Ned_h.
\eeq
\end{theorem}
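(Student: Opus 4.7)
The plan is to combine Lemma \ref{lem:combined}, the lower bound of Lemma \ref{lem:lowrestQi}, and the upper bounds on the remainder terms from Lemmas \ref{lem:coarse} and \ref{lem:local}. By Lemma \ref{lem:combined},
\[
(\bv_h, \proj_\abs \bv_h)_{\weight} \;=\; \sum_{\ell=0}^N \|\proj_\abs^\ell \bv_h\|_{\weight}^2 \;+\; \sum_{\ell=0}^N R_\abs^\ell(\bv_h),
\]
where the first sum is real and non-negative. Hence, using $|z| \geq \Re(z)$, a lower bound on $|(\bv_h, \proj_\abs \bv_h)_{\weight}|$ will follow as soon as the remainder sum is shown to be at most, say, half of $\sum_\ell \|\proj_\abs^\ell \bv_h\|_{\weight}^2$ under hypothesis \eqref{eq:E20}; combining this with Lemma \ref{lem:lowrestQi} then yields \eqref{eq:E12}.

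The strategy for absorbing the remainder is to tune the free parameters $\gamma, \gamma' \geq 0$ in Lemmas \ref{lem:coarse} and \ref{lem:local} so that two things happen simultaneously. First, the $\|\proj_\abs^\ell \bv_h\|_{\weight}^2$ parts of the remainder bounds should acquire coefficients that can be made arbitrarily small by shrinking $\cC_1$, so that these terms are directly absorbed into $\sum_\ell \|\proj_\abs^\ell \bv_h\|_{\weight}^2$. Second, the $\|\bv_h\|_{\weight}^2$ parts should have coefficients of exactly the same order as the right-hand side of the lower bound in Lemma \ref{lem:lowrestQi}, i.e.\ of order $(|\abs|/\wn^2)^2 (1+(\Hcs/\delta)^2)^{-1}$, so that a second application of Lemma \ref{lem:lowrestQi} converts them back into multiples of $\sum_\ell \|\proj_\abs^\ell \bv_h\|_{\weight}^2$ and they too can be absorbed.

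The correct choice is $\gamma = \gamma' = 1$. With this choice, the coefficient in front of $\|\proj_\abs^0 \bv_h\|_{\weight}^2$ in the bound from Lemma \ref{lem:coarse} is of order $(\wn\Hcs)\kseb\cdot\kseb$, and in front of $\|\bv_h\|_{\weight}^2$ is of order $(\wn\Hcs)\kseb\cdot\eksb$; the analogous coefficients in Lemma \ref{lem:local} have $(\wn\Hsub)$ in place of $(\wn\Hcs)\kseb$. Hypothesis \eqref{eq:E20} is engineered precisely so that both $(\wn\Hcs)\kseb$ and $(\wn\Hsub)$ are bounded by $\cC_1 (1+(\Hcs/\delta)^2)^{-1}\eksb$. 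Multiplying by $\kseb^\gamma = \kseb$ then cancels the $\eksb$, giving coefficients $\lesssim \cC_1$ on the $\|\proj^\ell\|_{\weight}^2$ terms; and multiplying by $\kseb^{-\gamma} = \eksb$ produces coefficients $\lesssim \cC_1 (1+(\Hcs/\delta)^2)^{-1}(|\abs|/\wn^2)^2$ on the $\|\bv_h\|_{\weight}^2$ terms, which by Lemma \ref{lem:lowrestQi} are in turn bounded by $\cC_1 \sum_\ell \|\proj_\abs^\ell \bv_h\|_{\weight}^2$.

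Taking $\cC_1$ small enough to absorb all implicit constants from the various $\lesssim$ symbols into a factor $1/2$ yields
\[
\sum_{\ell=0}^N |R_\abs^\ell(\bv_h)| \;\leq\; \tfrac{1}{2}\sum_{\ell=0}^N \|\proj_\abs^\ell \bv_h\|_{\weight}^2,
\]
and the theorem follows from a final invocation of Lemma \ref{lem:lowrestQi}. The main subtle point, rather than a genuine obstacle, is the balancing of $\gamma, \gamma'$: any other value would either give too large a coefficient on the $\|\proj^\ell\|_{\weight}^2$ terms, or a coefficient on the $\|\bv_h\|_{\weight}^2$ terms that cannot be matched to the lower bound of Lemma \ref{lem:lowrestQi}. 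This balance is exactly what makes \eqref{eq:E20} the natural hypothesis for the argument.
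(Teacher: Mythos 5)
Your proof is correct and follows essentially the same path as the paper's: both combine Lemma \ref{lem:combined} with the remainder bounds of Lemmas \ref{lem:coarse} and \ref{lem:local}, use Lemma \ref{lem:lowrestQi} twice (once to absorb the $\|\bv_h\|_{\weight}^2$ contributions back into $\sum_\ell\|\proj_\abs^\ell\bv_h\|_{\weight}^2$ and once at the end), and close the argument with $\gamma=\gamma'=1$. The only difference is presentational: the paper records the four absorption conditions with general $\gamma,\gamma'$ and identifies $\gamma=\gamma'=1$ by balancing exponents, whereas you make that choice up front and verify that it reproduces hypothesis \eqref{eq:E20}.
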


\

\bpf
By Lemma \ref{lem:combined},
\beqs
\left|\left(\bv_h, \proj_\abs \bv_h\right)_{\weight}\right| \ \geq\  \sum_{\ell=0}^N \N{\proj_{\abs}^{\ell}\bv_h}_{\weight}^2 - \sum_{\ell=0}^N \big|R_{\abs}^\ell(\bv_h)\big|.
\eeqs
Then, using the bounds \eqref{eq:E5} and \eqref{eq:E7} 
we have
\begin{align*}
&\left|\left(\bv_h, \proj_\abs \bv_h\right)_{\weight}\right|\  \gtrsim \ \sum_{\ell=0}^N \N{\proj_{\abs}^{\ell}\bv_h}_{\weight}^2 
- (\wn \Hcs)\kseb\Cdual \left[ \kseb^{\gamma} \N{\proj_{\abs}^0\bv_h}_{\weight}^2 + \kseb^{-\gamma} \N{\bv_h}^2_{\weight}\right] \\
&\hspace{4cm}-(\wn\Hsub)  \left[ \kseb^{\gamma'} \sum_{\ell=1}^N\N{\proj_{\abs}^{\ell}\bv_h}_{\weight}^2 + \kseb^{-\gamma'} \N{\bv_h}^2_{\weight}\right] 
\end{align*}
for $\gamma,\gamma'\geq 0$.
Therefore, there exist $C_1, C_2>0$ (sufficiently small but independent of all parameters)
such that
\beq\label{eq:N1}
(\wn \Hcs) \kseb^{1+\gamma}\Cdual  \leq  C_1 \quad\tand\quad
(\wn\Hsub) \kseb^{\gamma'} \leq  C_2
\eeq
ensure that
\begin{align}
\left|\left(\bv_h, \proj_\abs \bv_h\right)_{\weight}\right| \ \gtrsim \ & \sum_{\ell=0}^N \N{\proj_{\abs}^{\ell}\bv_h}_{\weight}^2
-(\wn \Hcs) \kseb^{1-\gamma} \N{\bv_h}^2_{\weight}
- (\wn\Hsub) \kseb^{-\gamma'} \N{\bv_h}^2_{\weight}.
\label{eq:N3}
\end{align}
Using  the bound \eqref{eq:45bound} from Lemma \ref{lem:lowrestQi} in \eqref{eq:N3}, we obtain 
\begin{align*}
\left|\left(\bv_h, \proj_\abs \bv_h\right)_{\weight}\right|
\gtrsim  &
\bigg(1+ \left(\frac{\Hcs}{\delta}\right)^2\bigg)^{-1} \eksb^2 \N{\bv_h}^2_{\weight} -(\wn \Hcs) \kseb^{1-\gamma} \N{\bv_h}^2_{\weight}\\
&\qquad-(\wn\Hsub) \kseb^{-\gamma'} \N{\bv_h}^2_{\weight}.
\end{align*}
Therefore, there exist $C_3, C_4>0$ (sufficiently small but independent of all parameters) so that the conditions
\beq\label{eq:E13}
(\wn \Hcs) \kseb^{1-\gamma} \leq C_3 \bigg(1+ \bigg(\frac{\Hcs}{\delta}\bigg)^2\bigg)^{-1} \eksb^2
\tand
(\wn\Hsub) \kseb^{-\gamma'} \leq  C_4  \bigg(1+ \bigg(\frac{\Hcs}{\delta}\bigg)^2\bigg)^{-1} \eksb^2,
\eeq
together with \eqref{eq:N1} 
ensure that the result \eqref{eq:E12} holds.
The conditions in \eqref{eq:E13} 
can then be rewritten as 
\beq\label{eq:E13alt}
(\wn \Hcs) \kseb^{3-\gamma} 
\bigg(1+ \bigg(\frac{\Hcs}{\delta}\bigg)^2\bigg)
\Cstab
\leq C_3 
\quad\tand\quad
(\wn\Hsub) \kseb^{2-\gamma'}
\bigg(1+ \bigg(\frac{\Hcs}{\delta}\bigg)^2\bigg)
\Cstab
 \leq  C_4.
\eeq

In summary, we have shown that there exist $C_1, C_2, C_3, C_4>0$ such that the required 
result \eqref{eq:E12} holds if \eqref{eq:N1} 
and \eqref{eq:E13alt} hold. 

The optimal choice of $\gamma$ to balance the exponents in the first equations in \eqref{eq:N1} and \eqref{eq:E13alt} (ignoring the factor $(1+ (H/\delta)^2)$) is $\gamma=1$, and the optimal choice of $\gamma'$ to balance the exponents in the second equations in \eqref{eq:N1} and \eqref{eq:E13alt} (again ignoring $(1+ (H/\delta)^2)$) is also $\gamma'=1$. With these values of $\gamma$ and $\gamma'$, 
the four conditions above are ensured by the  condition \eqref{eq:E20} and the proof is complete. 
\epf

\section{Matrices and convergence of GMRES}
\label{sec:Matrices}

In this section we convert the results of Theorems \ref{thm:boundQ} and  \ref{thm:final} into results about matrices, giving results about preconditioning $A_\abs$ (Theorems \ref{thm:final1}-\ref{cor:final4} below).

\subsection{From projection operators to matrices}\label{sec:51}

We first interpret the operators $\proj_{\abs}^{\ell}$ defined in \eqref{eq:defQi}, \eqref{eq:defQi0} in terms of matrices.

\begin{lemma} \label{thm:repQ}
Let $\bv_h = \sum_{j\in \cI^h} V_j \phi_j \ \in \ \Ned_h$. Then 
\begin{align*}
(i)  \quad \proj_{\abs}^{\ell} \bv_h & = \sum_{j \in \cI^h({\Omega_\ell})} \left((R^\ell)^T(A_{\abs}^\ell)^{-1} R^\ell A_\abs \mathbf{V}\right)_j \phi_j \ ,   \quad \ell = 1, \ldots, N, \\
(ii)  \quad \proj_{\abs}^0 \bv_h & = \sum_{p \in \cI^H} \left(R_0^T (A_{\abs}^0)^{-1} R_0 A_\abs 
\mathbf{V}\right)_p \Phi_p,  
\end{align*} 
where $A_{\abs}^\ell\ , \ell = 0, \ldots, N$ is defined in \eqref{eq:minor}.
\end{lemma}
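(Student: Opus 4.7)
The plan is to handle (i) and (ii) by the same recipe: expand $\proj_\abs^\ell \bv_h$ in a basis of the space where it lives (either the subdomain basis $\{\phi_j : j \in \cI^h(\Omega_\ell)\}$ or the coarse basis $\{\Phi_p : p \in \cI^H\}$), test the defining variational problem \eqref{eq:defQi}/\eqref{eq:defQi0} against each basis function in turn, and read off a linear system whose matrix is $A_\abs^\ell$.

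For (i), I will write $\proj_\abs^\ell \bv_h = \sum_{j \in \cI^h(\Omega_\ell)} W_j^\ell \phi_j$ for some coefficient vector $\mathbf{W}^\ell \in \C^{\#\cI^h(\Omega_\ell)}$. Choosing $\bw_h^\ell = \phi_i$ for each $i \in \cI^h(\Omega_\ell)$ in \eqref{eq:defQi}, the left-hand side becomes $\sum_{j \in \cI^h(\Omega_\ell)} W_j^\ell\, a_\abs(\phi_j,\phi_i) = (A_\abs^\ell \mathbf{W}^\ell)_i$ by the definition $A_\abs^\ell = R^\ell A_\abs (R^\ell)^T$ together with $(R^\ell)_{ij} = \delta_{ij}$. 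The right-hand side is $\sum_{j \in \cI^h} V_j\, a_\abs(\phi_j,\phi_i) = (A_\abs \mathbf{V})_i = (R^\ell A_\abs \mathbf{V})_i$. Hence $\mathbf{W}^\ell = (A_\abs^\ell)^{-1} R^\ell A_\abs \mathbf{V}$, and embedding $\mathbf{W}^\ell$ into a vector in $\C^{\#\cI^h}$ via $(R^\ell)^T$ (which is zero-extension on $\cI^h \setminus \cI^h(\Omega_\ell)$) yields the claimed formula.

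For (ii), the same strategy applies with the coarse basis $\{\Phi_p : p \in \cI^H\}$, but one extra bookkeeping step is needed because the coarse basis functions are not the fine ones. Writing $\proj_\abs^0 \bv_h = \sum_{p \in \cI^H} W_p^0 \Phi_p$ and taking $\bw_H = \Phi_p$ in \eqref{eq:defQi0}, I will use the fact that $(R^0)^T$ is the interpolation matrix $\Ned_H \hookrightarrow \Ned_h$, so $\Phi_p = \sum_{i \in \cI^h} (R^0)_{pi}\, \phi_i$. Expanding both arguments, one computes $a_\abs(\Phi_q,\Phi_p) = (R^0 A_\abs (R^0)^T)_{pq} = (A_\abs^0)_{pq}$ and $a_\abs(\phi_j,\Phi_p) = (R^0 A_\abs)_{pj}$, so the equation $a_\abs(\proj_\abs^0 \bv_h, \Phi_p) = a_\abs(\bv_h, \Phi_p)$ becomes $(A_\abs^0 \mathbf{W}^0)_p = (R^0 A_\abs \mathbf{V})_p$, giving $\mathbf{W}^0 = (A_\abs^0)^{-1} R^0 A_\abs \mathbf{V}$, which is the stated formula (after the obvious identification of the coarse-indexed coefficient vector).

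The only conceptual point, rather than a routine calculation, is the identification $a_\abs(\Phi_q,\Phi_p) = (A_\abs^0)_{pq}$ via the interpolation structure of $(R^0)^T$; everything else is linear algebra from the duality property $\psi_i(\bw_j) = \delta_{ij}$ enforced in \S\ref{sec:dis}. I do not anticipate any genuine obstacle, since invertibility of $A_\abs^\ell$ for $\ell = 0,\ldots,N$ has already been established from Lemma~\ref{lem:coer} immediately before the lemma statement.
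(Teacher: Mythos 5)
Your proof is correct and follows the standard approach the paper implicitly references (expand $\proj_\abs^\ell\bv_h$ in a basis, test the defining variational problem against each basis function, and identify the resulting Galerkin system via $(R^\ell)_{ij}=\delta_{ij}$ for $\ell\geq 1$ and the interpolation structure of $(R^0)^T$ for $\ell=0$); the paper itself omits the proof as essentially identical to the Helmholtz analogue. Note that your derivation correctly yields the coarse-basis coefficient vector $\mathbf{W}^0=(A_\abs^0)^{-1}R^0A_\abs\mathbf{V}$, which shows the extra $(R^0)^T$ appearing in the paper's statement of part (ii) is spurious: $(R^0)^T(A_\abs^0)^{-1}R^0A_\abs\mathbf{V}$ lives in $\C^{\#\cI^h}$ and would pair with $\phi_j$, $j\in\cI^h$, not with $\Phi_p$, $p\in\cI^H$.
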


\noi We omit the proof, since it is essentially identical to the proof of \cite[Theorem 5.4]{GrSpVa:17}.

\

The main results of the previous section - Theorems  \ref{thm:boundQ} 
and \ref{thm:final} -  give estimates for the norm and the field of values of the operator $\proj_{\abs}$ on the space $\Ned_h$, with respect to the inner product $(\cdot, \cdot)_{\weight}$ and its associated  norm. 

The next lemma shows that, in order to translate these results into norm and 
field of values estimates for the preconditioned matrix $B_{\abs, AS}^{-1}A_\abs$, we need to work in the weighted 
inner product $ \langle \cdot, \cdot\rangle_{D_\wn}$ defined such that 
if $\bv_h, \bw_h \in \Ned_h$ with coefficient vectors $\bV, \bW$ then 
\begin{equation}\label{eq:ip}
(\bv_h, \bw_h )_{\weight}\  =\ \langle \bV, \bW\rangle_{D_\wn};
\end{equation}    
note that the definition of $(\cdot, \cdot)_{\weight}$ \eqref{eq:weight} means that $\langle \bV, \bW\rangle_{D_\wn}$ depends on the wavenumber  $\wn$.
 
\begin{lemma}\label{ipnormQ}
Let $\bv_h = \sum_{j\in \cI^h} V_j \phi_h \ \in \ \Ned_h$. Then
\beqs
(i) \quad (\bv_h, \proj_\abs \bv_h)_{\weight}  = \langle \bV, B_{\abs, AS} ^{-1} A_\abs \bV\rangle_{D_\wn}, \quad\tand\quad
(ii) \quad\Vert \proj_\abs \bv_h\Vert _{\weight}   = \Vert B_{\abs, AS}^{-1} A_\abs \bV\Vert_{D_\wn}.
\eeqs
\end{lemma}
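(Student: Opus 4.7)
The plan is to combine the matrix representation of the local projections given by Lemma \ref{thm:repQ} with the definition of the Additive Schwarz preconditioner \eqref{eq:defAS}, and then appeal to \eqref{eq:ip}.

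First I would extract from Lemma \ref{thm:repQ} a clean matrix representation of the full projection $\proj_\abs = \sum_{\ell=0}^N \proj_\abs^\ell$. Writing both local statements in the common form that the coefficient vector (in the fine-grid basis) of $\proj_\abs^\ell \bv_h$ is $(R^\ell)^T (A_\abs^\ell)^{-1} R^\ell A_\abs \bV$ (for $\ell=0$ one uses $(R^0)^T$ as the interpolation matrix from the coarse basis into the fine basis, which was the very purpose of the definition \eqref{eq:restriction}), summing over $\ell$ and recalling \eqref{eq:defAS} yields
\begin{equation*}
\proj_\abs \bv_h \;=\; \sum_{j\in \cI^h} \bigl(B_{\abs,\mathrm{AS}}^{-1}\, A_\abs\, \bV\bigr)_j\, \phi_j.
\end{equation*}
That is, the coefficient vector of $\proj_\abs \bv_h$ in the fine-grid basis is exactly $B_{\abs,\mathrm{AS}}^{-1} A_\abs \bV$.

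For part (i), I apply the isometry \eqref{eq:ip} between the functional inner product $(\cdot,\cdot)_{\weight}$ on $\Ned_h$ and the weighted matrix inner product $\langle \cdot,\cdot\rangle_{D_\wn}$ on the coefficient space: the coefficient vector of $\bv_h$ is $\bV$ and (by the previous step) the coefficient vector of $\proj_\abs \bv_h$ is $B_{\abs,\mathrm{AS}}^{-1}A_\abs \bV$, so
\begin{equation*}
(\bv_h,\proj_\abs \bv_h)_{\weight} \;=\; \langle \bV,\, B_{\abs,\mathrm{AS}}^{-1}A_\abs \bV\rangle_{D_\wn},
\end{equation*}
which is the claim. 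For part (ii), the norm $\|\cdot\|_{D_\wn}$ is by definition induced by $\langle\cdot,\cdot\rangle_{D_\wn}$, so using \eqref{eq:ip} with both arguments equal to $\proj_\abs \bv_h$ gives
\begin{equation*}
\|\proj_\abs \bv_h\|_{\weight}^2 \;=\; (\proj_\abs \bv_h,\proj_\abs \bv_h)_{\weight} \;=\; \langle B_{\abs,\mathrm{AS}}^{-1}A_\abs \bV,\, B_{\abs,\mathrm{AS}}^{-1}A_\abs \bV\rangle_{D_\wn} \;=\; \|B_{\abs,\mathrm{AS}}^{-1}A_\abs \bV\|_{D_\wn}^2,
\end{equation*}
and taking square roots completes the proof.

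There is essentially no serious obstacle; the only care required is bookkeeping in the first step, namely confirming that the identification ``coefficient vector of $\proj_\abs^\ell \bv_h$ in the fine-grid basis $= (R^\ell)^T(A_\abs^\ell)^{-1}R^\ell A_\abs \bV$'' is uniform across $\ell=0$ and $\ell\geq 1$, which uses the fact that $(R^0)^T$ interpolates coarse basis functions into $\Ned_h$ and that for $\ell\geq 1$ the extension-by-zero of $\Ned_h^\ell$ into $\Ned_h$ is precisely effected by $(R^\ell)^T$. Once this is in hand, parts (i) and (ii) are direct consequences of \eqref{eq:ip}.
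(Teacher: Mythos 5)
Your proof is correct and follows essentially the same route as the paper, which proves the lemma in one line by citing the combination of Lemma \ref{thm:repQ}, Equation \eqref{eq:ip}, and the definition \eqref{eq:defAS}; you have simply spelled out the bookkeeping that the paper leaves implicit.
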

\begin{proof}
(i) and (ii) follow from combining Lemma \ref{thm:repQ}, Equation \eqref{eq:ip}, and the definition of $B_{\abs, AS}^{-1}$.
\eqref{eq:defAS}
\end{proof}

\subsection{Recap of Elman-type estimates for convergence of GMRES}\label{sec:52}

We consider the abstract  linear system 
 \begin{equation*}
\matrixC \bfx = \bfd 
\end{equation*}
in $\mathbb{C}^n$, where $\matrixC$ is an  $n\times n$ nonsingular complex matrix.   
Given an initial guess $\bfx^0$, we introduce the residual $\bfr^0 = \bfd- C \bfx^0$ and 
the usual Krylov spaces:  
$$  \cK^m(C, \bfr^0) := \mathrm{span}\{\matrixC^j \bfr^0 : j = 0, \ldots, m-1\} \ .$$
Let $\langle \cdot , \cdot \rangle_D$ denote the inner product on $\C^n$ 
induced by some Hermitian positive definite  matrix $D$, i.e.  
\begin{equation*}
\langle \bV, \bW\rangle_D := \bW^*D\bV
\end{equation*} 
with induced norm $\Vert \cdot \Vert_D$, where $^*$ denotes Hermitian transpose. For $m \geq 1$, define   $\bfx^m$  to be  the unique element of $\cK^m$ satisfying  the  
 minimal residual  property: 
$$ \ \Vert \bfr^m \Vert_D := \Vert \bfd - \matrixC \bfx^m \Vert_D \ = \ \min_{\bfx \in \cK^m(C, \br^0)} \Vert {\bfd} - {\matrixC} {\bfx} \Vert_D. $$
When $D = I$ this is just the usual GMRES algorithm, and we write use $\Vert \cdot \Vert$ to denote $\Vert \cdot \Vert_I$, but for  more general  $D$ it 
is the weighted GMRES method \cite{Es:98} in which case  
its implementation requires the application of the weighted Arnoldi process \cite{GuPe:14}.

The following theorem is a simple generalisation to the weighted setting of the GMRES convergence result of Beckermann 
Goreinov and Tyrtyshnikov \cite[Theorem 2.1]{BeGoTy:06}. This result is an improvement of the so-called ``Elman estimate", originally due to Elman \cite{El:82}; see also \cite{EiElSc:83}, \cite[Theorem 3.2]{St:97}, \cite[Corollary 6.2]{EiEr:01}, \cite{LiTi:12}, and the review \cite[\S6]{SiSz:07}.

\begin{theorem}\textbf{\emph{(Elman-type estimate for weighted GMRES)}} \label{thm:Elman}  
Let $C$ be a matrix with $0\notin W_D(C)$, where 
\beqs
W_D(C):= \big\{ \langle C \bv, \bv\rangle_D : \bv \in \Com^N, \|\bv\|_D=1\big\}
\eeqs
is the \emph{field of values}, also called the \emph{numerical range} of $C$ with respect to the inner product $\langle\cdot,\cdot\rangle_D$. 
Let $\beta\in [0,\pi/2)$ be defined such that
\beq\label{eq:cosbeta}
\cos \beta = \frac{\mathrm{dist}\big(0, W_D(\matrixC)\big)}{\| \matrixC\|_{D}}
\eeq
(observe that $\beta$ is well-defined since the right-hand side of \eqref{eq:cosbeta} is $\leq 1$).
Let $\gamma_\beta$ be defined by 
\beq\label{eq:gamma_beta}
\gamma_\beta:= 2 \sin \left( \frac{\beta}{4-2\beta/\pi}\right),
\eeq
and let $\br_m$ be defined as above.
Then
\beq\label{eq:Elman2}
\frac{\|\br_m\|_{D}}{\|\br_0\|_{D}} \leq \left(2 + \frac{2}{\sqrt{3}}\right)\big(2+ \gamma_\beta\big) \,(\gamma_\beta)^m.
\eeq
\end{theorem}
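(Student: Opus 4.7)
The plan is to reduce the weighted GMRES bound to the unweighted one of Beckermann--Goreinov--Tyrtyshnikov \cite[Theorem 2.1]{BeGoTy:06} via a similarity transformation. Since $D$ is Hermitian positive definite, factor $D = L^*L$ (e.g.~Cholesky), and define
\[
\widetilde{C} := LCL^{-1}, \qquad \widetilde{\bfx} := L\bfx, \qquad \widetilde{\bfd} := L\bfd, \qquad \widetilde{\br}_m := L\br_m.
\]
The first step is to verify that weighted GMRES for $C$ with inner product $\langle\cdot,\cdot\rangle_D$ is unitarily equivalent (in the standard Euclidean sense) to ordinary GMRES for $\widetilde{C}$. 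Indeed $\widetilde{C}\widetilde{\bfx} = \widetilde{\bfd}$ is equivalent to $C\bfx = \bfd$, and since $L C^j L^{-1} = \widetilde{C}^j$, the Krylov space satisfies $\cK^m(\widetilde{C},\widetilde{\br}_0) = L\,\cK^m(C,\br_0)$. Combined with $\|L\bu\|_I = \|\bu\|_D$ for all $\bu$, this shows that minimising $\|\br_m\|_D$ over $\bfx_m\in\cK^m(C,\br_0)$ produces the same residual reduction ratio $\|\br_m\|_D/\|\br_0\|_D = \|\widetilde{\br}_m\|/\|\widetilde{\br}_0\|$ as standard GMRES applied to $\widetilde{C}$.

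The second step is to check that the two quantities appearing in \eqref{eq:cosbeta} for $C$ with the $D$-inner product coincide with their standard-inner-product counterparts for $\widetilde{C}$. For the field of values: if $\widetilde{\bv} = L\bv$, then $\|\widetilde{\bv}\|_I = \|\bv\|_D$ and
\[
\langle \widetilde{C}\widetilde{\bv},\widetilde{\bv}\rangle_I = (L\bv)^* L C L^{-1} L \bv = \bv^* L^* L C \bv = \langle C\bv,\bv\rangle_D,
\]
so $W_I(\widetilde{C}) = W_D(C)$ and in particular $\mathrm{dist}(0,W_I(\widetilde{C})) = \mathrm{dist}(0,W_D(C))$. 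For the operator norm: $\|\widetilde{C}\widetilde{\bv}\|_I = \|LC\bv\|_I = \|C\bv\|_D$, and taking the supremum over $\widetilde{\bv}\ne 0$ gives $\|\widetilde{C}\|_I = \|C\|_D$. Hence the angle $\beta$ associated with $\widetilde{C}$ in the standard inner product equals the angle $\beta$ associated with $C$ in the $D$-inner product, and the hypothesis $0\notin W_D(C)$ is equivalent to $0\notin W_I(\widetilde{C})$.

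The third step is simply to invoke \cite[Theorem 2.1]{BeGoTy:06} applied to the matrix $\widetilde{C}$ in the standard inner product: this gives
\[
\frac{\|\widetilde{\br}_m\|_I}{\|\widetilde{\br}_0\|_I} \leq \Big(2 + \tfrac{2}{\sqrt{3}}\Big)\big(2+\gamma_\beta\big)\,(\gamma_\beta)^m,
\]
with $\gamma_\beta$ as in \eqref{eq:gamma_beta}. By Step~1 the left-hand side equals $\|\br_m\|_D/\|\br_0\|_D$ and by Step~2 the right-hand side is the one asserted in \eqref{eq:Elman2}.

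No step here is really ``hard''; the whole point of the proof is that the weighted setting is just a change of coordinates. The only thing worth checking carefully is that the minimisation in the transformed Krylov space $\cK^m(\widetilde{C},\widetilde{\br}_0)$ really produces the same iterate $\widetilde{\bfx}_m = L\bfx_m$ (so that the residual norms agree); this is immediate from the identities $\cK^m(\widetilde{C},\widetilde{\br}_0) = L\cK^m(C,\br_0)$ and $\|L\bu\|_I = \|\bu\|_D$, which guarantee a bijection between the admissible sets that preserves the objective. One could equivalently observe that the weighted Arnoldi process of \cite{GuPe:14} applied to $(C,D)$ coincides with the standard Arnoldi process applied to $\widetilde{C}$.
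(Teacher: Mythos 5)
Your proof is correct and is essentially the argument the paper invokes: the paper's proof cites \cite[Theorem 5.1]{GrSpVa:17} for the reduction from weighted to standard GMRES, and that reference carries out exactly the similarity transformation $\widetilde{C}=LCL^{-1}$ with $D=L^*L$ that you spell out, after which one applies \cite[Theorem 2.1]{BeGoTy:06}. You have simply made explicit the change-of-variables details that the paper delegates to the cited reference.
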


\bpf
The result with $D=I$ and \eqref{eq:Elman2} replaced by 
\beq\label{eq:Elman}
\frac{\|\br_m\|_{D}}{\|\br_0\|_{D}} \leq \sin^m \beta
\eeq
was proved in \cite{El:82, EiElSc:83}; this result was extended for general Hermitian positive-definite $D$ by an elementary argument in \cite[Theorem 5.1]{GrSpVa:17}. The result \eqref{eq:Elman2} with $D=I$ was proved in \cite[Theorem 2.1]{BeGoTy:06}, and can be extended to general $D$
using the arguments in \cite[Theorem 5.1]{GrSpVa:17}.
\epf

When we apply the estimate \eqref{eq:Elman2} to $B_{\abs, AS}^{-1}$ in \S\ref{sec:53} below, we find that $\beta= \pi/2-\epsilon$, where (for fixed $\delta, \Hcs$) $\epsilon= \epsilon (\wn,\abs)$ is such that $\epsilon\tendo$ as $\wn\tendi$.
It is therefore convenient to specialise the results \eqref{eq:Elman} and \eqref{eq:Elman2} to this particular situation in the following corollary.

\begin{corollary}\label{cor:Elman}
With $C$ a matrix such that $0\notin W_D(C)$, let $\epsilon\in (0,\pi/2]$ be defined such that
\beq\label{eq:cosbeta2}
\sin \epsilon = \frac{\mathrm{dist}\big(0, W_D(\matrixC)\big)}{\| \matrixC\|_{D}}.
\eeq
Then there exists $\cC>0$ (independent of $\epsilon$) such that, for $0<a<1$, 
\beq\label{eq:corE1}
\text{if} \quad m\geq \frac{\cC}{\epsilon}\log\left(\frac{12}{a}\right)\quad\text{ then }  \quad\frac{\|\br_m\|_{D}}{\|\br_0\|_{D}}\leq a.
\eeq
\end{corollary}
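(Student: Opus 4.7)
The plan is to apply Theorem \ref{thm:Elman} with the identification $\beta = \pi/2 - \epsilon$, so that $\cos\beta = \sin\epsilon$ and the hypothesis \eqref{eq:cosbeta} becomes exactly \eqref{eq:cosbeta2}. With this substitution, the exponent in \eqref{eq:gamma_beta} becomes
\begin{equation*}
\frac{\beta}{4 - 2\beta/\pi} \;=\; \frac{\pi/2 - \epsilon}{3 + 2\epsilon/\pi} \;=:\; \theta(\epsilon),
\end{equation*}
so that $\gamma_\beta = 2\sin\theta(\epsilon)$. The prefactor $(2 + 2/\sqrt 3)(2 + \gamma_\beta)$ in \eqref{eq:Elman2} is then uniformly bounded by some constant $C_0 < 12$, because $\theta(\epsilon) \le \theta(0) = \pi/6$ forces $\gamma_\beta \le 2\sin(\pi/6) = 1$.

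The main work is then to show that there exists $c > 0$, independent of $\epsilon \in (0, \pi/2]$, such that $\gamma_\beta \le \exp(-c\epsilon)$. I would do this by direct calculus on $[0,\pi/2]$. The function $g(\epsilon) := 2\sin\theta(\epsilon)$ is smooth on this interval with $g(0) = 1$ and $g(\pi/2) = 0$; a short computation gives
\begin{equation*}
\theta'(0) = \frac{-(3+2\epsilon/\pi) - (\pi/2 - \epsilon)(2/\pi)}{(3 + 2\epsilon/\pi)^2}\bigg|_{\epsilon=0} = -\tfrac{4}{9},
\end{equation*}
so $g'(0) = 2\cos(\pi/6)\,\theta'(0) = -\tfrac{4\sqrt 3}{9} < 0$. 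Monotonicity and smoothness of $g$ on the compact interval, together with $g(\pi/2) = 0$, then yield a linear bound $g(\epsilon) \le 1 - c'\epsilon$ on all of $(0,\pi/2]$ for some $c' > 0$, and hence $\gamma_\beta \le 1 - c'\epsilon \le \exp(-c'\epsilon)$.

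Combining this with \eqref{eq:Elman2} gives
\begin{equation*}
\frac{\|\br_m\|_D}{\|\br_0\|_D} \;\le\; C_0 \exp(-c'\epsilon m) \;\le\; 12\,\exp(-c'\epsilon m),
\end{equation*}
and requiring the right-hand side to be at most $a$ yields $m \ge (c')^{-1}\epsilon^{-1}\log(12/a)$, which is \eqref{eq:corE1} with $\cC := 1/c'$. The only delicate point is the uniformity of $c'$ across the full range of $\epsilon$; since $g'(0) < 0$ and $g$ vanishes at $\pi/2$, this is a routine convexity/compactness argument rather than a hard estimate. Everything else is a direct rewriting of Theorem \ref{thm:Elman} in the regime $\beta \nearrow \pi/2$ that is relevant for the application to $B_{\abs,\mathrm{AS}}^{-1}$ in \S\ref{sec:53}.
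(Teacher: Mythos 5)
Your proposal is correct and follows essentially the same route as the paper: substitute $\beta=\pi/2-\epsilon$ into Theorem~\ref{thm:Elman}, bound the prefactor $(2+2/\sqrt3)(2+\gamma_\beta)$ by $12$, and show $\gamma_\beta\leq \re^{-c\epsilon}$ uniformly on $(0,\pi/2]$, the key computation being exactly the paper's expansion $\gamma_\beta=1-\tfrac{4}{3\sqrt3}\epsilon+\cO(\epsilon^2)$, i.e.\ your $g'(0)=-4\sqrt3/9$. The only difference is organizational: the paper splits into $0<\epsilon<\epsilon_1$ (Taylor expansion of $\log\gamma_\beta$) and $\epsilon_1\leq\epsilon\leq\pi/2$ (continuity and monotonicity of $\gamma_\beta$), taking a maximum of the two constants, whereas you extract a single uniform linear bound $\gamma_\beta\leq 1-c'\epsilon$ on the whole interval by continuity/compactness (using strict monotonicity so that $g<1$ for $\epsilon>0$) and then apply $1-x\leq \re^{-x}$; both arguments are valid.
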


\noi That is, choosing $m\gtrsim \epsilon^{-1}$ is sufficient for GMRES to converge in an $\epsilon$-independent way as $\epsilon\tendo$.

\bpf[Proof of Corollary \ref{cor:Elman}]
We first prove that it is sufficient to establish the result for $\epsilon$ sufficiently small. Suppose there exist 
$0<\epsilon_1<\pi/2$ and $\cC_1(\epsilon_1)>0$ (independent of $\epsilon$) such that, for $0<a<1$ and $0<\epsilon<\epsilon_1$,
\beq\label{eq:corE1a}
\text{if} \quad m\geq \frac{\cC_1(\epsilon_1)}{\epsilon}\log\left(\frac{12}{a}\right)\quad\text{ then }  \quad\frac{\|\br_m\|_{D}}{\|\br_0\|_{D}}\leq a.
\eeq
Setting $\beta=\pi/2-\epsilon$ we see that \eqref{eq:cosbeta2} implies \eqref{eq:cosbeta}. Observe that $\gamma_\beta$ in \eqref{eq:Elman2} is a continuous function of $\epsilon$ and decreases from $1$ to $0$ as $\epsilon$ increases from $0$ to $\pi/2$. Hence, if $\epsilon_1\leq \epsilon \leq \pi/2$, then there exists a constant $\cC_2(\epsilon_1)$ such that $\gamma_\beta\leq 1- \cC_2(\epsilon_1)$ and thus 
\beqs
(\gamma_\beta)^m \leq \exp\Big( - m \big|\log \big(1-\cC_2(\epsilon_1)\big)\big|\Big).
\eeqs
Since $(2 + 2/\sqrt{3})(2+ \gamma_\beta)<3(2 + 2/\sqrt{3})<12$, it follows from \eqref{eq:Elman2} that, 
if $\epsilon_1\leq\epsilon\leq \pi/2$,
$\|\br_m\|_{D}/\|\br_0\|_{D}\leq a$ provided that
\beqs
\exp\Big( - m \big|\log \big(1-\cC_2(\epsilon_1)\big)\big|\Big)\leq \frac{a}{12},\quad\text{ i.e. }\quad
m\geq \frac{1}{\big|\log \big(1-\cC_2(\epsilon_1)\big)\big|}\log \left(\frac{12}{a}\right).
\eeqs
Assuming we have proved that \eqref{eq:corE1a} holds for all $0<\epsilon<\epsilon_1$, we then set
\beq\label{eq:C1}
\cC:= \max\bigg[ \cC_1(\epsilon_1) , \frac{\pi}{2|\log (1-\cC_2(\epsilon_1)|}\bigg]
\eeq
and then \eqref{eq:corE1} holds for all $0<\epsilon\leq \pi/2$ (with the first term in the max in \eqref{eq:C1} dealing with  $0<\epsilon<\epsilon_1$ and the second term dealing with  $\epsilon_1\leq \epsilon\leq \pi/2$).

We therefore only need to prove that 
 there exist $\epsilon_1>0$ and $\cC_1(\epsilon_1)>0$ such that for $0<a<1$, \eqref{eq:corE1a} holds for all $0<\epsilon<\epsilon_1$.
If $\beta= \pi/2-\epsilon$, then $\cos\beta=\sin\epsilon = \epsilon+ \cO(\epsilon^3)$ as $\epsilon \tendo$. From the definition of $\gamma_\beta$ in \eqref{eq:gamma_beta}, we have
\beq\label{eq:factor}
\gamma_\beta:=2 \sin \left( \frac{\beta}{4-2\beta/\pi}\right)= 2 \sin \left( \frac{\pi}{6} -\frac{4\epsilon}{9}+\cO(\epsilon^2)\right)= 1 -\frac{4\epsilon}{3\sqrt{3}} + \cO(\epsilon^2)\quad\tas\,\, \epsilon\tendo,
\eeq
and then
\beqs
\log \gamma_\beta = -\frac{4\epsilon}{3\sqrt{3}}  + \cO(\epsilon^2) \quad\tas\,\, \epsilon\tendo,
\eeqs
and so there exist $\epsilon_1>0$ and $\cC_1(\epsilon_1)>0$ such that
\beqs
(\gamma_\beta)^m = \re^{m\log\gamma_\beta}\leq \re^{-m\epsilon/\cC_1} \quad\tfa0< \epsilon\leq \epsilon_1,
\eeqs
and \eqref{eq:corE1a} follows from \eqref{eq:Elman2}.
\epf

\bre
When $\beta= \pi/2-\epsilon$, the convergence factor in the original Elman estimate \eqref{eq:Elman} is
\beqs
\sin\beta = \cos\epsilon 
= 1- \frac{\epsilon^2}{2}+\cO(\epsilon^4);
\eeqs
by comparing this to \eqref{eq:factor} we can see that \eqref{eq:Elman} is indeed a weaker bound.
\ere

\subsection{The main results}\label{sec:53}

With the classical two-level additive Schwarz preconditioner for $A_\abs$ ($\matrixB_{\abs, \text{AS}}^{-1}$
defined by
\eqref{eq:defAS} above) and the inner product $\langle\cdot,\cdot\rangle_{D_\wn}$ defined by \eqref{eq:ip} above, we have the following results.

\begin{theorem}\textbf{\emph{(Bounds on the norm and field of values for left preconditioning)}}\label{thm:final1}

\noi (i) 
\vspace{-3ex}
\begin{align*}
\N{ B_{\abs, AS}^{-1} A_\abs }_{D_\wn}  & \ \lesssim  \ \left(\ksqeps\right) \quad \text{for all} \,\, \Hcs, \Hsub.
\end{align*}
(ii) If $\Omega$ is a convex polyhedron,
then, given $\wn_0>0$, there exists a constant $\cC_1$ such that 
if 
\beq\label{eq:E20rpt}
\max\left\{ (\wn\Hsub)\Csub,\  (\wn \Hcs)  \kseb \Ccs \right\}  \ \leq \  
\cC_1 \bigg(1 + \left(\frac{\Hcs}{\delta}\right)^2\bigg)^{-1}  \left(\frac{\vert \abs \vert }{\wn^2}\right),
\eeq
then
 \begin{align}\nonumber
\frac{
\Big\vert \left\langle \bV, 
B_{\abs, AS}^{-1}A_\abs \bV \right\rangle_{D_\wn} \Big\vert
}{
\left\Vert \bV \right\Vert_{D_\wn}^2
}  & \ \gtrsim   \ 
\bigg(1 + \left(\frac{\Hcs}{\delta}\right)^2\bigg)^{-1} 
\left(\frac{\vert \abs\vert }{\wn^2} \right)^{2}, 
\end{align}
for all $\bV \in \Com^n$ and  for all $\wn\geq \wn_0$. 
\end{theorem}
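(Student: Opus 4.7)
The plan is to reduce both statements to the operator-level results of \S\ref{sec:Convergence} via the dictionary provided by Lemmas \ref{thm:repQ} and \ref{ipnormQ}. Given any coefficient vector $\bV\in \C^n$, let $\bv_h = \sum_j V_j \phi_j \in \Ned_h$ be the corresponding finite element function. Lemma \ref{thm:repQ} identifies $\proj_\abs \bv_h$ with the matrix action of $B_{\abs,AS}^{-1} A_\abs$ on $\bV$, and then Lemma \ref{ipnormQ} identifies both the $(\cdot,\cdot)_{\weight}$-inner product and norm with their matrix counterparts $\langle \cdot,\cdot\rangle_{D_\wn}$ and $\|\cdot\|_{D_\wn}$. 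Thus the matrix quantities appearing in (i) and (ii) coincide exactly with the operator quantities estimated in Theorems \ref{thm:boundQ} and \ref{thm:final}.

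For part (i), I would apply Lemma \ref{ipnormQ}(ii) to write
\[
\frac{\|B_{\abs,AS}^{-1} A_\abs \bV\|_{D_\wn}}{\|\bV\|_{D_\wn}}
\;=\; \frac{\|\proj_\abs \bv_h\|_{\weight}}{\|\bv_h\|_{\weight}},
\]
and then invoke Theorem \ref{thm:boundQ} to conclude that this ratio is $\lesssim \wn^2/|\abs|$, uniformly in $\Hcs$ and $\Hsub$. Taking the supremum over $\bV$ gives the matrix norm bound asserted in (i).

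For part (ii), I would use Lemma \ref{ipnormQ}(i) to identify
\[
\frac{|\langle \bV, B_{\abs,AS}^{-1} A_\abs \bV\rangle_{D_\wn}|}{\|\bV\|_{D_\wn}^2}
\;=\; \frac{|(\bv_h,\proj_\abs \bv_h)_{\weight}|}{\|\bv_h\|_{\weight}^2}.
\]
The convex polyhedron hypothesis is exactly what is needed to guarantee, via Lemma \ref{lem:reg1}, that Assumption \ref{ass:reg} holds, which is the standing hypothesis of Theorem \ref{thm:final}. Under the condition \eqref{eq:E20rpt} (which is literally \eqref{eq:E20}), Theorem \ref{thm:final} bounds the right-hand side from below by $(1 + (\Hcs/\delta)^2)^{-1}(|\abs|/\wn^2)^2$, which is precisely the claim.

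Strictly speaking no real obstacle remains at this stage, since the analytical heart of the argument has already been carried out in \S\ref{sec:42}--\S\ref{sec:44}; the only work here is to apply the bijection between $\Ned_h$-functions and their coefficient vectors and to check that the condition \eqref{eq:E20rpt} lines up with \eqref{eq:E20}. The one small point worth flagging in the write-up is that part (i) holds unconditionally (Theorem \ref{thm:boundQ} requires neither convexity nor Assumption \ref{ass:reg}), whereas part (ii) needs convexity of $\Omega$ only in order to invoke the regularity hypothesis feeding Theorem \ref{thm:final}.
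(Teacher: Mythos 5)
Your proposal is correct and follows exactly the route taken in the paper: the paper's proof is the one-line combination of Theorems \ref{thm:boundQ} and \ref{thm:final} with Lemma \ref{ipnormQ}, with convexity entering only to secure Assumption \ref{ass:reg} via Lemma \ref{lem:reg1} for part (ii). Your additional remark that part (i) holds unconditionally is consistent with the paper's statement and requires no further work.
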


\bpf
This follows from combining Theorems \ref{thm:boundQ} and \ref{thm:final}, and Lemma \ref{ipnormQ}.
\epf

\noi We make the following remarks:
\bit
\item In stating Part (ii) of Theorem \ref{thm:final1}, we have specified  that $\Omega$ is a convex polyhedron, since the assumption needed for this part, 
Assumption \ref{ass:reg}, holds in this case by  Lemma \ref{lem:reg1}. 
\item 
Observe that, just as in the Helmholtz theory in \cite{GrSpVa:17}, the condition on the coarse mesh diameter $\Hcs$ in \eqref{eq:E20rpt} is more stringent than the condition on the subdomain diameter $\Hsub$; one finds similar discrepancies in criteria in domain decomposition theory for coercive elliptic PDEs; see, e.g., \cite{GrLeSc:07}.
\item The condition on $\Hcs$ in \eqref{eq:E20rpt} is better than the analogous condition obtained for the Helmholtz equation in \cite[Theorem 5.6]{GrSpVa:17}. This is because \cite{GrSpVa:17} considered the interior impedance problem for the Helmholtz equation with absorption, and the contribution from the impedance boundary led to extra restrictions on $\Hcs$.
\eit

\noi Combining Theorem \ref{thm:final1} with the result about GMRES convergence in Corollary \ref{cor:Elman}, we obtain 
\begin{theorem}\textbf{\emph{(GMRES convergence for left preconditioning)}}\label{cor:final2}
Let $\Omega$ be a convex polyhedron. Consider the weighted GMRES method applied to $B_{\abs, AS}^{-1} A_\abs$, where the residual is minimised in
the norm induced by $D_\wn$ (as described in \S\ref{sec:52}).

Given $\wn_0>0$, there exists $\cC>0$, independent of all parameters (i.e.~$\wn, \delta, \Hcs,$ and $\Hsub$), such that, 
given $0<a<1$, if (i) $\wn\geq \wn_0$, (ii) Condition \eqref{eq:E20rpt} holds,
and (iii)
\beq\label{eq:iterations}
m \geq \cC \,
\Cstab
\kseb^3 \bigg(1 + \left(\frac{\Hcs}{\delta}\right)^2\bigg)\log\left(\frac{12}{a}\right),
\quad\text{ then } \quad
\frac{\|\br_m\|_{D_k}}{\|\br_0\|_{D_k}}\leq a.
\eeq
\end{theorem}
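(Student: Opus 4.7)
The strategy is to feed the bounds of Theorem~\ref{thm:final1} into the abstract GMRES convergence estimate of Corollary~\ref{cor:Elman}, taking the matrix $C = B_{\abs, AS}^{-1} A_\abs$ and the inner product induced by $D_\wn$. Note that Condition~\eqref{eq:E20rpt} guarantees that Theorem~\ref{thm:final1}(ii) is applicable; in particular it yields $0\notin W_{D_\wn}(C)$, so that the quantity $\epsilon\in(0,\pi/2]$ determined by \eqref{eq:cosbeta2} is well defined.

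The first concrete step is to estimate $\sin\epsilon$ from below. By Theorem~\ref{thm:final1}(i),
\begin{equation*}
\| C \|_{D_\wn} \ \lesssim \ \kseb,
\end{equation*}
while Theorem~\ref{thm:final1}(ii) (applied with the hypothesis \eqref{eq:E20rpt}) gives
\begin{equation*}
\mathrm{dist}\big(0, W_{D_\wn}(C)\big) \ = \ \inf_{\bV\neq 0} \frac{|\langle \bV, C\bV \rangle_{D_\wn}|}{\|\bV\|_{D_\wn}^2} \ \gtrsim \ \bigg(1 + \Big(\frac{\Hcs}{\delta}\Big)^2\bigg)^{-1} \eksb^2.
\end{equation*}
Dividing these two estimates, using also $|\abs|\lesssim \wn^2$, produces
\begin{equation*}
\sin\epsilon \ \gtrsim \ \bigg(1 + \Big(\frac{\Hcs}{\delta}\Big)^2\bigg)^{-1} \eksb^3.
\end{equation*}

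The second step is to convert this into a lower bound on $\epsilon^{-1}$. Since $\sin\epsilon \leq \epsilon$ on $[0,\pi/2]$, we have $\epsilon^{-1} \leq (\sin\epsilon)^{-1}$, so
\begin{equation*}
\frac{1}{\epsilon} \ \lesssim \ \bigg(1 + \Big(\frac{\Hcs}{\delta}\Big)^2\bigg) \kseb^3.
\end{equation*}
Corollary~\ref{cor:Elman} then asserts that $\|\br_m\|_{D_\wn}/\|\br_0\|_{D_\wn} \leq a$ as soon as $m \geq (\cC'/\epsilon)\log(12/a)$ for a constant $\cC'$ independent of all parameters, and substituting the above bound on $1/\epsilon$ yields the claimed iteration count~\eqref{eq:iterations} with an adjusted constant $\cC$.

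The main (mild) obstacle here is simply bookkeeping: one must make sure that every ``$\lesssim$'' through the chain of inequalities hides constants that are independent of $\wn, \abs, \Hcs, \Hsub$ and $\delta$. The $\wn$-dependence enters only through the ratio $|\abs|/\wn^2$ in Theorem~\ref{thm:final1}, the $\delta$- and $\Hcs$-dependence only through the stable-splitting factor $1+(\Hcs/\delta)^2$, and the hypothesis \eqref{eq:E20rpt} serves the dual purpose of ensuring that the field of values bound in Theorem~\ref{thm:final1}(ii) is available and of guaranteeing $\epsilon>0$. No further technical ingredient is required beyond the abstract GMRES estimate of Corollary~\ref{cor:Elman}, which has already absorbed the Beckermann--Goreinov--Tyrtyshnikov refinement of the Elman bound.
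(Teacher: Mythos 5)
Your proof is correct and takes essentially the same approach as the paper: combine the norm bound of Theorem~\ref{thm:final1}(i) and the field-of-values bound of Theorem~\ref{thm:final1}(ii) to obtain $\sin\epsilon \gtrsim (1+(\Hcs/\delta)^2)^{-1}(|\abs|/\wn^2)^3$, then feed this into Corollary~\ref{cor:Elman}. Your final reduction via $\epsilon^{-1} \leq (\sin\epsilon)^{-1}$ is in fact slightly cleaner than the paper's more informal argument, which asserts that the worst case is $\cos\beta \sim \widetilde\epsilon$ with $\widetilde\epsilon\to 0$ and invokes $\cos\beta = (\pi/2-\beta)(1+o(1))$; both routes deliver the same constant-tracking and the same conclusion.
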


\bre[Cases when Theorem \ref{cor:final2} implies optimal GMRES convergence]\label{rem:TheoryCases}
When $\vert \abs\vert  \sim \wn^2$ and $\delta \sim \Hcs$, Condition \eqref{eq:E20rpt} is satisfied when $\Hsub\sim \Hcs \sim \wn^{-1}$.  
\es{Similarly, when $\vert \abs\vert  \sim \wn^2$ and $\delta \sim h$, Condition \eqref{eq:E20rpt} is satisfied when $h\sim \Hsub\sim \Hcs \sim \wn^{-1}$. In both these cases} 
the bound \eqref{eq:iterations} implies GMRES will converge with the number of iterations independent of all these parameters;
these results are illustrated in   \S\ref{sec:num}. 
\ere

When $\vert\abs\vert \ll \wn^2$, Theorem \ref{cor:final2} is not sharp. Indeed, when $\vert \abs\vert  \sim \wn$ and $\delta \sim \Hcs$, Condition \eqref{eq:E20rpt} is satisfied with the impractical mesh widths $\Hsub \sim \wn^{-2}$, $\Hcs\sim \wn^{-3}$, and the bound \eqref{eq:iterations} implies GMRES will converge with the number of iterations growing at most like $\wn^3$ as $\wn\tendi$; in \S\ref{sec:num} we see that in fact the method does deteriorate badly when
$\xi$ is substantially less than $k^2$.

The results of Theorem \ref{cor:final2} are better than the analogous results for the Helmholtz equation obtained in \cite[Corollary 5.7]{GrSpVa:17}; this is because we used the improvement \eqref{eq:Elman2} on the Elman estimate due to \cite{BeGoTy:06}, whereas \cite{GrSpVa:17} only used the original Elman estimate \eqref{eq:Elman}.

\bpf[Proof of Theorem \ref{cor:final2}]
With $\beta$ defined by \eqref{eq:cosbeta} with $C=B_{\abs, AS}^{-1} A_\abs$, Theorem \ref{thm:final1} implies that $\cos\beta \gtrsim \widetilde\epsilon(k,\abs,\delta,\Hcs)$ with
\beqs
\widetilde\epsilon(k, \abs, \delta, \Hcs):=
\bigg(1 + \left(\frac{\Hcs}{\delta}\right)^2\bigg)^{-1} 
\left(\frac{\vert \abs\vert }{\wn^2} \right)^{3}.
\eeqs
If $|\abs|\sim k^2$ and $\delta\sim \Hcs$, then $\widetilde\epsilon(k,\abs, \delta,\Hcs)$ is bounded below by a positive constant, but if either $|\xi|\ll k^2$ or $\delta \ll \Hcs$, then $\widetilde\epsilon(k,\abs, \delta,\Hcs)$ can approach zero.
The closer $\cos\beta$ is to zero, the worse the estimate of Theorem \ref{thm:Elman} is; the worse-case scenario is when $\widetilde\epsilon \tendo$ and $\cos \beta\sim \widetilde\epsilon$. Since $\cos\beta=\sin(\pi/2-\beta) = (\pi/2-\beta)(1+o(1))$, it is sufficient to consider the case when $\beta= \pi/2-\epsilon$ with $\epsilon\tendo$ and  $\epsilon\sim \widetilde\epsilon$. Applying Corollary \ref{cor:Elman}, we obtain the bound on $m$ \eqref{eq:iterations} and the proof is complete.
\epf

Using coercivity of the adjoint
form in Lemma \ref{lem:coer}, we can obtain the following  result about right
preconditioning, however in the inner product induced by
$D_\wn^{-1}$. From this, the analogue of Theorem
\ref{cor:final2}, with $D_\wn$ replaced by $D^{-1}_k$, follows.

\begin{theorem}\textbf{\emph{(Bounds on the field of values for right preconditioning)}}\label{thm:final3}

\noi (i)
\vspace{-3ex}
\begin{align*}
\Vert  A_\abs B_{\abs, AS}^{-1}  \Vert_{D_\wn^{-1}}  & \ \lesssim  \ \left(\ksqeps\right) \quad \text{for all} \,\, \Hcs, \Hsub.
\end{align*}
(ii) With the same assumptions as Part (ii) of Theorem \ref{thm:final1}, given $\wn_0>0$ and provided Condition \eqref{eq:E20rpt} holds, 
 \begin{align*}
\quad\quad
\frac{\Big\vert \Big\langle \bV, 
A_\abs B_{\abs, AS}^{-1} \bV \Big\rangle_{D_\wn^{-1}} \Big\vert}
{\Vert \bV \Vert_{D_\wn^{-1}}^2 }  & \ \gtrsim   \ 
\bigg(1 + \left(\frac{\Hcs}{\delta}\right)^2\bigg)^{-1} 
\left(\frac{|\abs|}{\wn^2} \right)^{2},
\end{align*}
for all $\bV\in \Com^n$ and for all $\wn\geq \wn_0$.
\end{theorem}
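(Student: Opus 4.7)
The strategy is to deduce Theorem \ref{thm:final3} from Theorem \ref{thm:final1} by passing to the adjoint. The key observation is that \emph{right} preconditioning $A_\abs$ in the $D_\wn^{-1}$-inner product is, up to complex conjugation of the field of values, the same as \emph{left} preconditioning $A_{-\abs}$ in the $D_\wn$-inner product. Since Theorem \ref{thm:final1} has already done all the hard analytical work, essentially no new PDE estimates are needed.

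The first step is an elementary identity: for any Hermitian positive-definite matrix $D$ and any square matrix $M$, consider the similarity $N := D^{-1/2}MD^{1/2}$. Using the substitutions $\bV = D^{1/2}\bU$ for the $D^{-1}$ norm/field of values and $\bV = D^{-1/2}\bU$ for the $D$ norm/field of values of $M^*$, one verifies that $\|M\|_{D^{-1}} = \|N\| = \|N^*\| = \|M^*\|_D$ and $W_{D^{-1}}(M) = W(N) = \overline{W(N^*)} = \overline{W_D(M^*)}$. In particular,
\beqs
\mathrm{dist}\big(0, W_{D_\wn^{-1}}(M)\big) = \mathrm{dist}\big(0, W_{D_\wn}(M^*)\big).
\eeqs

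The second step identifies the adjoint $(A_\abs B_{\abs, AS}^{-1})^*$ explicitly. From \eqref{eq:adjoint_form} the adjoint sesquilinear form $a^*_\abs$ is exactly $a_{-\abs}$, so the Galerkin matrices satisfy $A_\abs^* = A_{-\abs}$. Since the restriction matrices $R^\ell$, $\ell = 0,\dots,N$, are real, $(A_\abs^\ell)^* = R^\ell A_\abs^* (R^\ell)^T = A_{-\abs}^\ell$; hence, by the definition \eqref{eq:defAS},
\beqs
B_{\abs, AS}^{-*} = \sum_{\ell=0}^N (R^\ell)^T (A_{-\abs}^\ell)^{-1} R^\ell = B_{-\abs, AS}^{-1},
\quad \text{so} \quad
(A_\abs B_{\abs, AS}^{-1})^* = B_{-\abs, AS}^{-1} A_{-\abs}.
\eeqs

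Finally, the condition \eqref{eq:alpha}, the hypothesis \eqref{eq:E20rpt}, and Assumption \ref{ass:reg} all depend only on $|\abs|$ and cover both signs of $\abs\in\R\setminus\{0\}$, so Theorem \ref{thm:final1} applies verbatim to $A_{-\abs}$. This yields the required upper bound on $\|B_{-\abs, AS}^{-1}A_{-\abs}\|_{D_\wn}$ (for Part (i)) and, under \eqref{eq:E20rpt}, the required lower bound on $\mathrm{dist}(0, W_{D_\wn}(B_{-\abs, AS}^{-1}A_{-\abs}))$ (for Part (ii)). Combining these with the algebraic identities of the first step delivers the statement. The only delicate point is the adjoint identification $B_{\abs, AS}^{-*} = B_{-\abs, AS}^{-1}$; once this is in hand, no further analysis of the projection operators $\proj_\abs^{\ell}$ is needed.
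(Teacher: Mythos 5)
Your proposal is correct and takes essentially the same route as the paper: pass to the adjoint and use the change of inner product $D_\wn\leftrightarrow D_\wn^{-1}$ to reduce the right-preconditioned bounds to the left-preconditioned ones of Theorem \ref{thm:final1}. The paper's proof performs the substitution $\bW=D_\wn^{-1}\bV$ directly and then invokes the coercivity of $a_\abs^*$ from Lemma \ref{lem:coer}; you achieve the same thing slightly more explicitly via the similarity $N=D^{-1/2}MD^{1/2}$ and the identification $(A_\abs B_{\abs,AS}^{-1})^*=B_{-\abs,AS}^{-1}A_{-\abs}$ together with the observation that all hypotheses of Theorem \ref{thm:final1} depend only on $|\abs|$.
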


\begin{proof}
A straightforward calculation shows that for all $\bV \in \bC^n$ and with $\bW =
D_\wn^{-1} \bV$, we have
$$
\frac{\vert \langle \bV, A_\abs B_{\abs,AS}^{-1} \bV \rangle_{D_\wn^{-1}}\vert} 
{  \langle \bV , \bV\rangle_{D_\wn^{-1}}}\  =\ 
\frac{\vert \langle  (B^*_{\abs,AS})^{-1} A_\abs^* \bW, \bW \rangle_{D_\wn}\vert} 
{  \langle \bW , \bW\rangle_{D_\wn}}  \  =\ 
\frac{\vert \langle \bW,  (B^*_{\abs,AS})^{-1} A_\abs^* \bW\rangle_{D_\wn}\vert} 
{  \langle \bW , \bW\rangle_{D_\wn}} , $$
where    $A^*$, $(B_{\abs,AS}^*)^{-1}$ are the Hermitian transposes of $A,
B_{\abs,AS}^{-1}$ respectively.  
The coercivity of the adjoint form proved in Lemma \ref{lem:coer} then ensures that the estimate in Theorem 
\ref{thm:final1} (ii) also holds for the adjoint matrix and the result
(ii) then follows.  The result (i) is obtained analogously from taking
the adjoint and using Theorem  \ref{thm:final1} (i).
\end{proof} 

\begin{theorem}\textbf{\emph{(GMRES convergence for right preconditioning)}}\label{cor:final4}
The result of Theorem \ref{cor:final2} 
holds when left preconditioning is replaced by right preconditioning (i.e.~$B_{\abs, AS}^{-1} A_\abs$ is replaced by $ A_\abs B_{\abs, AS}^{-1}$ in the statement of the theorem).
\end{theorem}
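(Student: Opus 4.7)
The plan is to mirror the proof of Theorem \ref{cor:final2}, simply replacing the inner product $\langle \cdot, \cdot \rangle_{D_\wn}$ by $\langle \cdot, \cdot \rangle_{D_\wn^{-1}}$ throughout, and using Theorem \ref{thm:final3} in place of Theorem \ref{thm:final1}. Since $D_\wn$ is Hermitian positive definite, so is $D_\wn^{-1}$, so the weighted GMRES framework of \S\ref{sec:52} (and in particular Theorem \ref{thm:Elman} and Corollary \ref{cor:Elman}) applies with this choice.

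First I would set $C := A_\abs B_{\abs, AS}^{-1}$ and $D := D_\wn^{-1}$ in Corollary \ref{cor:Elman}. By Theorem \ref{thm:final3}(i),
\[
\N{C}_{D_\wn^{-1}} \lesssim \left(\ksqeps\right),
\]
and by Theorem \ref{thm:final3}(ii), provided \eqref{eq:E20rpt} holds and $\wn \ge \wn_0$,
\[
\frac{\dist\big(0, W_{D_\wn^{-1}}(C)\big)}{\N{C}_{D_\wn^{-1}}}
\gtrsim \bigg(1 + \left(\frac{\Hcs}{\delta}\right)^2\bigg)^{-1} \left(\frac{|\abs|}{\wn^2}\right)^{3} =: \widetilde{\epsilon}(\wn,\abs,\delta,\Hcs).
\]
This is identical (up to the hidden constant) to the lower bound obtained in the proof of Theorem \ref{cor:final2}.

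Defining $\epsilon \in (0, \pi/2]$ by $\sin \epsilon = \dist(0, W_{D_\wn^{-1}}(C))/\N{C}_{D_\wn^{-1}}$ as in \eqref{eq:cosbeta2}, we conclude that $\epsilon \gtrsim \widetilde{\epsilon}$. Applying Corollary \ref{cor:Elman} with this choice of $C$ and $D$ then yields, for any $0 < a < 1$, that
\[
m \ge \cC\, \Cstab \kseb^3 \bigg(1 + \left(\frac{\Hcs}{\delta}\right)^2\bigg) \log\left(\frac{12}{a}\right)
\]
implies $\N{\br_m}_{D_\wn^{-1}}/\N{\br_0}_{D_\wn^{-1}} \le a$, which is exactly \eqref{eq:iterations} with $D_\wn$ replaced by $D_\wn^{-1}$.

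There is no real obstacle here: all substantive work (the coercivity of the adjoint form, the field-of-values analysis, and the weighted Elman-type estimate) has been done in Lemma \ref{lem:coer}, Theorem \ref{thm:final3}, and Corollary \ref{cor:Elman} respectively. The only thing worth emphasising is that the choice of weighting matrix $D_\wn^{-1}$ (rather than $D_\wn$) is dictated by the structure of right preconditioning, as was seen in the proof of Theorem \ref{thm:final3}: switching inner products from $\langle \cdot, \cdot \rangle_{D_\wn^{-1}}$ acting on $A_\abs B_{\abs, AS}^{-1}$ to $\langle \cdot, \cdot \rangle_{D_\wn}$ acting on $(B_{\abs, AS}^*)^{-1} A_\abs^*$ is what allowed coercivity of the adjoint form to be invoked. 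The weighted GMRES machinery of \S\ref{sec:52} is precisely what makes this translation from ``continuous'' field-of-values bounds to ``discrete'' GMRES iteration bounds go through unchanged in the weighted setting.
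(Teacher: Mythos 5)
Your proof is correct and follows exactly the same route the paper takes: the paper itself supplies no separate argument for Theorem \ref{cor:final4}, remarking only that it follows from Theorem \ref{thm:final3} in the same way Theorem \ref{cor:final2} follows from Theorem \ref{thm:final1}, i.e.~by feeding the norm and field-of-values bounds (now in the $D_\wn^{-1}$ inner product) into Corollary \ref{cor:Elman}. Your observations that $D_\wn^{-1}$ is Hermitian positive definite and that the choice of $D_\wn^{-1}$ is dictated by the adjoint manipulation in the proof of Theorem \ref{thm:final3} are exactly the points the paper relies on implicitly.
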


\section{Numerical experiments}\label{sec:num}

\igg{We now give details of the numerical experiments,  previously  summarised in \S\ref{sec:contributions}. 
  After  some technical preliminaries  in  \S\ref{sec:6.1},   \S\ref{sec:6.2}   illustrates in detail the
  theory of the paper. We use  the Additive Schwarz preconditioner  $B^{-1}_{\xiprec}$ to precondition $A_{\xiprob}$ and we  study various choices of   $\xiprob   =    \xiprec$,  as well as the  dependence  on the  overlap $\delta$. We also see
  that weighted GMRES performs almost identically to standard GMRES.   These experiments motivate various extensions of the additive Schwarz method and in  \S\ref{sec:6.3} we present two tables illustrating
   these, including solving the propagative case $\xiprob = 0$.
  To highlight the importance of absorptive problems in practice,  \S\ref{subsec:medimax}  presents results for a
 heterogeneous practical example where $0\leq \xiprob({\bx}) \lesssim \wn^2$. In \S\ref{subsec:cobra} we solve the 
  well-known  COBRA cavity test case, with $\xiprob=0$, $\xiprec=k$. This shows that adding the absorption into  the preconditioner (as in the theory) allows good parallel scaling.} 

\subsection{Details of the computations and notation}\label{sec:6.1}

All the computations are done in FreeFem++, an open source domain specific language (DSL) specialised for solving BVPs with variational 
methods (\url{http://www.freefem.org/ff++/}). The code was parallelised and run on the French supercomputers Curie   (TGCC) and   OCCIGEN (CINES).  
The discretization is by \Nedelec edge finite elements as in \S\ref{sec:dis} on a mesh of tetrahedra of the domain $\Omega$. We will give results for both order $1$ and order $2$ finite elements. 
The right preconditioned linear system is solved with GMRES without restarts.  

In the theory we assumed that each coarse mesh tetrahedron is a union of fine mesh tetrahedra, but in some numerical experiments we consider non-nested fine/coarse meshes. Nevertheless, if the fine and coarse mesh are not nested, one has to be careful when computing the ``restriction'' matrix~\eqref{eq:restriction} for edge finite elements. 
Indeed, if a fine mesh tetrahedron crosses a coarse mesh tetrahedron, then the computation of matrix~\eqref{eq:restriction} requires the integration of \emph{non-smooth} functions $\bw^H_{p}$ along the supports (edges, faces) of the degrees of freedom on the fine mesh. Hence for these integrals we use a Gaussian quadrature rule with more points than that normally used when the meshes are nested (this is described further below).

In our implementation of GMRES, we use a random initial guess, aiming to ensure  
that all frequencies are present in the error.  The stopping criterion  
is based on a reduction of  the relative residual by  $10^{-6}$ and the 
maximum number of iterations allowed is $200$. 
\mb{Apart from in  \S \ref{sec:6.2}, 
  throughout the paper we use standard GMRES.} 
To apply the preconditioner, the local problems in each subdomain and 
the coarse space problem are each  solved with a direct solver 
(in this case MUMPS \cite{amestoy:2001:fully}).

In the experiments in \S\ref{sec:6.2}, \S\ref{sec:6.3}, we solve the PDE \eqref{eq:2} 
in the unit cube  $\Omega = (0,1)^3$.  
The right-hand side function $\mathbf{F}$ in \eqref{eq:2} is given by the point source 
\[\mathbf{F}=[f,f,f], \quad \text{where} \quad f = -\exp(-400((x-{0.5})^2+(y-{0.5})^2+(z-{0.5})^2)).\]

The total number of degrees of freedom in the fine grid 
problem is denoted by $n$.
The fine mesh diameter, $h$, is either chosen as $h \sim k^{-3/2}$ (generally  believed to remove the pollution effect, by analogy with Helmholtz problems; see, e.g., the review in \cite[Pages 182--183]{GrLoMeSp:15}) or with a fixed number $g$ of  grid-points per wavelength, i.e.~$h \neweq  2 \pi / (g  \wn)$, where the notation $\neweq$ means that $h$ is chosen as close to $2\pi/ (g\wn)$ as possible, whilst still ensuring that we have a mesh on the domain. We use the following parameters to describe the {domain decomposition method}: 
\begin{equation*}
\left\{ 
\begin{array}{ll}
\Hsub = \text{maximum subdomain diameter {(\emph{without} considering the overlap),}}  \\
\Hcs = \text{maximum coarse mesh  diameter,} \\
{N} = \Nsub = \text{number of subdomains,}\\
\ncs = \text{number of  coarse mesh degrees of freedom.} 
\end{array} 
\right.   
\end{equation*}
We define the parameters $(\alpha, \alpha')$ such that 
\begin{equation} 
\label{eq:alphas}
\Hsub \sim \wn^{-\alpha} \quad \text{and} \quad \Hcs \sim \wn^{-\alpha'}.  
\end{equation} 
 In the tables we use $\#$ to denote the number of iterations for any given method. 
 For some experiments we also report the computation times in seconds. 
To set up the preconditioners, we start by subdividing the domain $\Omega$ into $N$ non-overlapping simply-connected subdomains  
 $\{\Omega_\ell^0: \ell = 1, \ldots, N\}$,  with the property that each 
subdomain  is a union of fine mesh tetrahedra.   This could be done by hand (if the fine mesh is highly structured); 
or more generally it could 
be done by applying  graph partitioning software (e.g.~METIS~\cite{Ka:98:metis}) to 
the graph determined by the elements of the   fine mesh.  (Both methods are used below.) We then introduce  
extended overlapping subdomains  $\Omega_\ell^p$, $p = 1,2, \ldots$,  defined recursively by requiring    $\Omega_\ell^{p+1}$ 
to be the union of $\Omega_\ell^{p}$ with all the fine mesh tetrahedra  touching it.
As $p$ increases so does the overlap. We call the case $p=2$ \emph{minimal overlap} ($\delta \sim h$).
The term {\em generous overlap} refers to the case $\delta \sim \Hsub$.

\subsection{Experiments illustrating the theory}
\label{sec:6.2}

\igg{Here {we} solve the system arising from the PDE \eqref{eq:2},  
with the PEC  boundary condition \eqref{eq:PECcond}.
In the first  experiment (Table \ref{tab:1}) we set $\xiprob =\xiprec=k^2$ and use generous overlap ($\delta \sim \Hsub$). {We choose $h \sim \wn^{-3/2}$ and} we study the effect of varying   $\alpha$ and $ \alpha'$ {in \eqref{eq:alphas}}. 
The theory tells us that in this case the two-level Additive Schwarz preconditioner~\eqref{eq:defAS} will be robust for $\alpha = \alpha' = 1$ (cf.~Remark~\ref{rem:TheoryCases}, first case). 
\mb{Table \ref{tab:1} gives the iteration numbers for the two-level AS preconditioner, with in parentheses the corresponding one-level preconditioner, which is defined by \eqref{eq:defAS} but omitting the term involving $(A_{\abs}^0)^{-1}$}.
The choice $\alpha = \alpha'$ means  that the subdomains and the coarse mesh are nested. As we decrease $\alpha$, {resp.~$\alpha'$}, we do more work {on each subdomain} {(because subdomains become fewer on a given fine mesh problem)}, resp.~less work in the coarse grid. We study the effect of reducing $\alpha= \alpha'$ in the second and third columns of Table \ref{tab:1}. 
In the third and fourth columns we see the effect of doing more work on the coarse grid and keeping the subdomains fixed.   
Because our parallel code solves each subdomain problem on an individual  
core, which is the natural and efficient implementation of domain decomposition methods, and $\Hsub \sim \wn^{-\alpha}$, a large number of cores is required when $\alpha$ is close to   $1$ {(e.g.~for $\alpha=1$, $\wn=25$, this would require $15625$ cores)}.   Thus, smaller values of $\wn$ are presented in the first column of 
Table \ref{tab:1}.   In the other columns we can go higher with $\wn$ since there are fewer subdomains, but for $k=30$ the subdomains, with generous overlap, become too big and memory issues appear.} 
\igg{Note that, in general, if $\alpha \ne \alpha'$ then the fine and coarse mesh are not nested, and so, recalling the discussion in \S\ref{sec:6.1}, we need a more accurate quadrature formula when computing matrix~\eqref{eq:restriction}: here we used $12$ Gaussian quadrature points on the edge.
The method appears to be robust for all the considered choices of $\alpha$ and $\alpha'$, and the coarse grid is less useful for smaller $\alpha$. }

\begin{table}
\mb{
\begin{tabular}{|c||c|c|c|c|}
\hline 
$\wn$ & $\alpha =1 = \alpha'$& $\alpha =0.9 = \alpha'$ & $\alpha =0.8 = \alpha'$ & $\alpha =0.8, \alpha' = 1$ \\
\hline
10 & 53(57) & 40(42) & 37(38) & 37(38) \\
15 & 59(63) & 46(48) & 37(37) & 37(37) \\
20 & 63(67) & 46(47) & 37(37) & 36(37) \\
25 & - & 48(50) & 38(38) & 38(38) \\
\hline
\end{tabular}
\caption{PEC  boundary condition on $\partial \Omega$, $\xiprob = \xiprec = \wn^2$, $\delta \sim \Hsub$, $h \sim \wn^{-3/2}$: 
iteration numbers for the two-level(one-level) AS preconditioner.    
} \label{tab:1}
}
\end{table}

\begin{table}
\mb{
\begin{tabular}{|c||c|c|c|c|c|}
\hline 
$\wn$ & $\alpha =1 = \alpha'$& $\alpha =0.9 = \alpha'$ & $\alpha =0.8 = \alpha'$ & $\alpha =0.8, \alpha' = 1$ & $\alpha =0.6, \alpha' = 1$\\
\hline
10 & 76(96) & 55(61) & 52(57) & 47(57) & 40(48)\\
15 & 82(105) & 59(73) & 57(65) & 51(65) & 44(55)\\
20 & 58(90) & 61(80) & 60(67) & 52(67) & 44(58)\\
25 & - & 61(89) & 64(75) & 54(75) & 44(60) \\
30 & - & - & 66(80) & 53(80) & 44(60) \\
\hline
\end{tabular}   
\caption{Repeat of Table \ref{tab:1} but  with $\delta \sim h $.}
\label{tab:minOverlap}
}
\end{table}

\begin{table}
\mb{
\begin{tabular}{|c||c|c|c|c|c|}
\hline 
$\wn$ & $\alpha =1 = \alpha'$& $\alpha =0.9 = \alpha'$ & $\alpha =0.8 = \alpha'$ & $\alpha =0.8, \alpha' = 1$ \\
\hline
10 & $>$200($>$200) & 134(145) & 96(116) & 92(116) \\ 
15 & * & $>$200($>$200) & 190($>$200) & 175($>$200) \\ 
20 & * & * & * & * \\
25 & - & * & * & * \\
\hline
\end{tabular}
\caption{Repeat of Table \ref{tab:1} but  with $\xiprob = \xiprec = \wn$.} 
\label{tab:lowAbs}
}
\end{table}

\begin{table}
\mb{
\begin{tabular}{|c||c|c|c|c|}
\hline 
$\wn$ & $\alpha =1 = \alpha'$& $\alpha =0.9 = \alpha'$ & $\alpha =0.8 = \alpha'$ & $\alpha =0.8, \alpha' = 1$ \\
\hline
10 & 55(59) & 41(43) & 38(39) & 39(39) \\ 
15 & 61(65) & 47(49) & 38(39) & 38(39) \\ 
20 & 65(70) & 47(49) & 38(38) & 38(38) \\ 
25 & - &  50(52) & 39(40) & 39(40) \\ 
\hline
\end{tabular}
\caption{Repeat of Table \ref{tab:1} but  using weighted  instead of standard GMRES.} \label{tab:weighted}
}
\end{table} 

\igg{Since generous overlap requires too much communication in parallel implementations, we now investigate the performance of (more parallel efficient) minimal overlap methods. 
Table \ref{tab:minOverlap} shows the results for the cases considered in 
 Table \ref{tab:1}, but with $\delta \sim h$; moreover, Table \ref{tab:minOverlap} shows also the case with $\alpha=0.6$, $\alpha'=1$, since with  $\delta \sim h$ subdomains remain small enough for the local problems to be solved on a single core.  
The number of iterations increases, but the method still performs well, with  the number of iterations growing mildly with $\wn$; for $\alpha=0.6$, $\alpha'=1$, robustness appears to be fully achieved.}     
\igg{We also observe that the coarse grid solve has a larger effect on the iteration count  
compared to the generous overlap case. }

\mb{When solving systems arising from the propagative Maxwell equations $\xiprob = 0$ we know by analogy with 
the Helmholtz case \cite{GrSpVa:17} that the preconditioner should be based on a smaller $\xiprec$, typically $\xiprec = k$ (or less) is a good  choice. In Table \ref{tab:lowAbs} we repeat Table \ref{tab:1} with {$\xiprob = \xiprec = \wn$}. 
This table shows that we cannot expect a method which uses PEC boundary conditions on subdomains to provide a robust preconditioner for the propagative case. A similar observation for Helmholtz problems is given in \cite{GrSpVa:17}.
}

\mb{We recall that the theory  above is for left preconditioning in the norm induced by $D_k$, whereas in  Tables \ref{tab:1}--\ref{tab:lowAbs} above we have used standard GMRES. For comparison in Table \ref{tab:weighted} we repeat Table~\ref{tab:1} using weighted GMRES  with weight $D_k$. As we see this makes little difference. 
Similar observations for Helmholtz problems were made in \cite{GrSpVa:17}.
}

\begin{table} 
\mb{
\begin{tabular}{|c|c|c|c||c|c|c|c|}  
\hline
\multicolumn{4}{|c||}{} & \multicolumn{2}{c|}{$\Hsub = 2/(5 f)$} &  \multicolumn{2}{c|}{$\Hsub = 1/(2 f)$} \\
\hline 
$f$ & $\wn$ & $n$ & $\ncs$ & $N_\text{sub}$ & \#AS  &   $N_\text{sub}$ & \#AS\\
\hline 
2 & 12.6 & 2.5 $\times 10^6$ & 2.9 $\times 10^3$ & 125   & 38(76)   & 64     & 39(72)\\ 
3 & 18.8 & 8.3 $\times 10^6$ & 9.3 $\times 10^3$ &          &               & 216  & 39(101)\\
4 & 25.1 & 2.0 $\times 10^7$ & 2.1 $\times 10^4$ & 1000 & 39(137) & 512   & 39(129)\\ 
5 & 31.4 & 3.8 $\times 10^7$ & 4.1 $\times 10^4$ & 		& 		& 1000 & 39(157)\\ 
6 & 37.7 & 6.6 $\times 10^7$ & 7.0 $\times 10^4$ & 3375 & 58(195) & 1728 & 57(183)\\ 
7 & 44.0 & 1.0 $\times 10^8$ & 1.1 $\times 10^5$ & & & 2744 & 60($>$200) \\ 
8 & 50.3 & 1.6 $\times 10^8$ & 1.6 $\times 10^5$ & 8000 & 59($>$200) & 4096 & 61($>$200)\\ 
\hline
\end{tabular}
\caption{PEC  boundary condition on $\partial \Omega$, $\xiprob = \xiprec = \wn^2$, $\delta \sim h$, $h = 1/(20 f)$, $\Hcs = 1/(2 f)$, where $f=2\pi \wn$:  results for the two-level(one-level) AS preconditioner.} 
\label{tab:theoryDeltamin}
}
\end{table}  

\mb{Finally, in Table~\ref{tab:theoryDeltamin} we illustrate the second case of Remark~\ref{rem:TheoryCases}, which proves  robustness even in the case of minimal overlap, when the fine grid and the coarse grid each have a fixed number of grid points per wavelength. 
Here we set $\xiprob = \xiprec = \wn^2$, $\delta \sim h$, and we use order 2 \Nedelec finite elements.
In order to have ``round'' numbers with respect to the wavelength and then actual cubes as subdomains, we vary the frequency $f$, with $\wn = 2\pi f$, and set $h = 1/(20 f)$, $\Hcs = 1/(4 f)$, $\Hsub = 2/(5 f)$ (with $f$ divisible by $2$) or $\Hsub = 1/(2 f)$.  
As expected from the theory, the two-level preconditioner performs very well, especially in comparison to the one-level version. 
}

\subsection{Alternative preconditioners} 
\label{sec:6.3}
In order to properly take into account the overlap between subdomains, where unknowns are repeated, the Additive Schwarz preconditioner~\eqref{eq:defAS} is classically modified using a discrete \emph{partition of unity}. More precisely, diagonal matrices $(D^\ell)_{\ell=1}^N$ are constructed such that 
\begin{equation}
\label{eq:algebPartUnity}
\sum_{\ell = 1}^{N} (R^\ell)^T D^\ell R^\ell = I
\end{equation}
and then the two-level \emph{Restricted Additive Schwarz} preconditioner is defined by
\begin{equation}
\label{eq:RAS2}
\matrixB_{\abs, \text{RAS},2}^{-1}: =  \sum_{\ell=1}^N (R^\ell)^T D^\ell (A_{\abs}^\ell)^{-1} R^\ell +(R^0)^T ({A_{\abs}^0})^{-1} R^0.
\end{equation}
Note that the construction of the partition of unity matrices $D^\ell$ is intricate, especially for edge finite elements: here we follow the construction in \cite{paperPP:2016}. 
In \eqref{eq:RAS2} the coarse correction matrix $G := (R^0)^T ({A_{\abs}^0})^{-1} R^0$ is combined in an additive way with \mb{the \emph{one-level} {Restricted Additive Schwarz} preconditioner}
\begin{equation}
\mb{\matrixB_{\abs, \text{RAS},1}^{-1}: =  \sum_{\ell=1}^N (R^\ell)^T D^\ell (A_{\abs}^\ell)^{-1} R^\ell.}
\end{equation}
The two-level \emph{hybrid} version of RAS is obtained using instead the BNN (Balancing Neumann Neumann) coarse correction formula:
\begin{equation}
\label{eq:HRAS}
B^{-1}_{\abs,\text{HRAS}}:= (I - G A_{\abs}) \matrixB_{\abs, \text{RAS},1}^{-1} (I - A_{\abs} G) + G,
\end{equation}
\mb{while the \emph{Adapted Deflation (Variant 1)} coarse correction formula gives this other two-level version of RAS (for more details on  \eqref{eq:RAS2}--\eqref{eq:ADEF1ras} and other related methods, see \cite{Tang2009}):}
\begin{equation}
\label{eq:ADEF1ras}
\mb{B^{-1}_{\abs,\text{ADEF1-RAS}}:= \matrixB_{\abs, \text{RAS},1}^{-1} (I - A_{\abs} G) + G.}
\end{equation}
Moreover, as discussed in the introduction, better boundary conditions can be used on the subdomains: \mb{$B_{\abs,\text{ImpRAS},1}^{-1}$ is defined like  $B_{\abs,\text{RAS},1}^{-1}$,} but the solves with $A_{\abs}^\ell$, $\ell=1,\dots,N$, are replaced by solves with matrices corresponding to the Maxwell problem on $\Omega_\ell$ with homogeneous \emph{impedance} boundary condition on $\partial \Omega_\ell \backslash \po$.  
\mb{As in~\eqref{eq:HRAS}, resp.~\eqref{eq:ADEF1ras}, one can define the two-level hybrid version $B^{-1}_{\abs,\text{ImpHRAS}}$, resp.~the Adapted Deflation version $B^{-1}_{\abs,\text{ADEF1-ImpRAS}}$ of ImpRAS.}

 \begin{table}
\mb{
 \begin{tabular}{|c|c|c|c||c|c|c||c|c|c|}
 \hline 
 \multicolumn{4}{|c||}{} & \multicolumn{3}{c||}{$\delta \sim \Hsub$} & \multicolumn{3}{c|}{ $\delta \sim h$} \\ 
 \hline 
 $\wn$ &  $n$ & $\Nsub$ & $\ncs$ & $\#$RAS & $\#$HRAS &  $\#$ADEF1 & $\#$RAS & $\#$HRAS &  $\#$ADEF1\\
\hline 
10 & 3.4 $\times 10^5$ & 216   &  7.9 $\times 10^3$  & 20(21) & 17 & 15 & 27(39) & 19 & 19 \\ 
15 & 1.9 $\times 10^6$ & 512   &  2.6 $\times 10^4$  & 20(22) & 17 & 15 & 31(46) & 20 & 20 \\ 
20 & 5.2 $\times 10^6$ & 1000 &  6.0 $\times 10^4$  & 20(22) & 17 & 15 & 31(49) & 20 & 20 \\  
25 & 1.5 $\times 10^7$ & 2197 &  1.2 $\times 10^5$  & 22(24) & 18 & 16 & 34(55) & 22 & 22 \\ 
30 & 4.1 $\times 10^7$ & 3375 &  2.0 $\times 10^5$  &  - &  - &  -  & 33(59) & 21 &  21 \\ 
\hline
\end{tabular}  
 \caption{PEC  boundary condition on $\partial \Omega$, $\xiprob = \xiprec = \wn^2$, $h \sim \wn^{-3/2}$, $\alpha = 0.8$, $\alpha' = 1$: 
results for the two-level(one-level) RAS preconditioners.}
 \label{tab:rasHras}  
}
 \end{table} 
\mb{First, in Table~\ref{tab:rasHras} we repeat the same experiment of Tables~\ref{tab:1}--\ref{tab:minOverlap} using the RAS, HRAS, ADEF1-RAS preconditioners, that is we consider PEC  boundary condition on $\partial \Omega$, $\xiprob = \xiprec = \wn^2$, $h \sim \wn^{-3/2}$, $\delta \sim \Hsub$ or $\delta \sim h$. 
We set $\Hsub$ and $\Hcs$ as in \eqref{eq:alphas}, with $\alpha=0.8$, $\alpha'=1$.  }
\mb{Comparing Table~\ref{tab:rasHras} with Table~\ref{tab:1}, we see that RAS is superior to AS as expected; HRAS and ADEF1-RAS are superior to both additive methods.} 

In the next experiments we solve the problem with $\xiprob=0$ and impedance boundary condition \eqref{eq:imp} on $\partial \Omega$. We use the preconditioner ImpHRAS and minimal overlap.
\begin{table}
\begin{center}
\begin{tabular}{|c|c|c||c|c|c|c|c|}
\hline
\multicolumn{3}{|c||}{} & \multicolumn{5}{c|}{$\alpha=0.6$, $\alpha'=0.9$}\tabularnewline
\hline 
$\wn$ & $n$ & $N_\text{sub}$ & \#2-level & $\ncs$ & Time & \#1-level & Time\tabularnewline
\hline 
10 & $2.6 \times 10^5$ & 27 & 20 & $2.9 \times 10^3$ & 17.1(1.8) & 37 & 13.7(2.6)\tabularnewline
15 & $1.5 \times 10^6$ & 125 & 26 & $1.0\times 10^4$ & 25.4(3.9) & 71 & 26.5(9.1) \tabularnewline
20 & $5.2 \times 10^6$ & 216 & 29 & $2.1\times 10^4$ & 53.0(9.0) & 95 & 60.8(25.9)\tabularnewline
25 & $1.4\times 10^7$ & 216 & 33 & $4.4\times 10^4$ & 145.0(29.6) & 107 & 189.3(90.5) \tabularnewline 
30 & $3.3\times 10^7$ & 343 & 39 & $6.9\times 10^4$ & 387.9(132.7) & 134 & 669.4(444.7)\tabularnewline 
\hline
\multicolumn{3}{|c||}{} & \multicolumn{5}{c|}{$\alpha=0.7$, $\alpha'=0.8$}\tabularnewline
\hline 
$\wn$ & $n$ & $N_\text{sub}$ & \#2-level & $\ncs$ & Time & \#1-level & Time\tabularnewline
\hline 
10 & $3.1\times 10^5$ & 125 & 28 & $1.9\times 10^3$ & 8.2(1.2) & 58 & 7.7(2.0)\tabularnewline
15 & $1.5\times 10^6$ & 216 & 39 & $4.2\times 10^3$ & 19.3(3.7) & 82 & 19.7(7.2) \tabularnewline
20 & $6.3\times 10^6$ & 512 & 58 & $7.9\times 10^3$ & 42.6(10.0) & 124 & 49.1(21.0) \tabularnewline
25 & $1.4\times 10^7$ & 729 & 60 & $1.7\times 10^4$ & 75.1(17.8) & 150 & 98.2(43.0)\tabularnewline 
30 & $3.5\times 10^7$ & 1000 & 81 & $2.6\times 10^4$ & 223.5(86.7) & 181 & 320.7(199.0)\tabularnewline 
\hline
\end{tabular}
\caption{ ImpHRAS for  \Nedelec order 1 elements,  $h \sim \wn^{-3/2}$, impedance boundary condition on $\partial \Omega$, $\xiprob = 0$, $\xiprec = 0$, $\delta \sim h$. In the Time columns we report the total time (the execution time for GMRES) in seconds. 
} 
\label{tab:EpsPrec0}
\end{center}
\end{table}
{In Table~\ref{tab:EpsPrec0} we still discretize the problem with order 1 edge elements and take $h \sim \wn^{-3/2}$, $\Hsub$ and $\Hcs$ as in \eqref{eq:alphas}, for two choices of $\alpha, \alpha'$.}  
These methods are close to being load balanced in the sense that 
the coarse grid and subdomain problem sizes are  very similar  
when $\alpha + \alpha' = 3/2$. 
As in \S\ref{sec:6.2} we recall that if $\alpha \ne \alpha'$ the fine and coarse mesh are not nested, and again we used $12$ Gaussian quadrature points on the edge when computing the matrix \eqref{eq:restriction}. 
Out of the methods tested, the 2-level method with $(\alpha, \alpha')  = (0.6, 0.9)$ gives the best iteration count, but is more expensive. 
The method $(\alpha, \alpha')  = (0.7, 0.8)$  is faster in time  
but its iteration count grows more quickly, so its   advantage will  
diminish as $\wn$ increases further. 
For $(\alpha, \alpha') = (0.6,0.9)$ the coarse grid size grows with 
$\mathcal{O}(n^{{0.64}}  )$ while the time grows with $\mathcal{O}(n^{{0.65}}  )$. 
 For $(\alpha, \alpha') = (0.7,0.8)$ the rates are  $\mathcal{O}(n^{{0.54}}  )$  and $\mathcal{O}(n^{{0.69}})$.
\mb{Note that here we switch off the absorption also in the preconditioner, i.e., we choose $\xiprec = 0$. 
The iteration counts are almost identical to the ones with $\xiprec=\wn$, reported in the preliminary paper \cite[Table 4]{BoDoGrSpTo:2017:DD24Max}.} 
This observation raises very interesting open theoretical questions regarding the performance of methods based on local impedance solves without using absorption; these questions are also discussed in \cite{GrSpZo:17}. 
Despite absorption having little effect here, it has a beneficial effect if the coarse grid problem is solved by an inner iteration - see our discussion of the COBRA cavity problem in \S\ref{subsec:cobra}.
 
\begin{table}
\begin{center}
\begin{tabular}{|c|c|c||c|c|c|c|c|}  
\hline
\multicolumn{3}{|c||}{} & \multicolumn{5}{c|}{$g=20$, $\alpha=0.6$, $g_\text{cs} = 2$}\tabularnewline
\hline 
$\wn$ & $n$ & $N_\text{sub}$ & \#2-level & $\ncs$ & Time & \#1-level & Time\tabularnewline 
\hline 
10 &  1.1 $\times 10^6$ & 343 & 53 &1.3 $\times 10^3$ &27.6(6.9) & 116 & 29.5(14.1)\\ 
20 &  8.3 $\times 10^6$& 1728 & 55 & 9.3 $\times 10^3$ & 44.8(12.1)& $>$ 200 & 73.8(47.5)\\ 
30 &  2.8 $\times 10^7$& 3375 & 52 &3.0  $\times 10^4$&83.4(21.7)& - &-  \\ 
40 & 6.6 $\times 10^7$& 5832 & 59 & 7.0 $\times 10^4$ & 164.9(45.2) & - & - \\ 
\hline
\multicolumn{3}{|c||}{} & \multicolumn{5}{c|}{$g=20$, $\alpha=0.5$, $g_\text{cs} = 2$}\tabularnewline
\hline 
$\wn$ & $n$ & $N_\text{sub}$ & \#2-level & $\ncs$ & Time & \#1-level & Time\tabularnewline 
\hline 
10 &  1.1 $\times 10^6$& 125 & 38 & 1.3 $\times 10^3$&37.7(7.5) &80 & 36.6(14.8)\\ 
20 & 8.3 $\times 10^6$& 343 &36 & 9.3 $\times 10^3$&85.8(18.9) &123 & 161.7(72.1)\\ 
30  & 2.8 $\times 10^7$& 729 & 41 &3.0  $\times 10^4$& 155.7(39.8)& 162& 267.3(174.5)\\ 
40 &  6.6 $\times 10^7$& 1331 & 51 & 7.0 $\times 10^4$ & 272.3(77.6) & $>$ 200& 453.8(305.2)\\ 
\hline
\end{tabular}
\caption{ ImpHRAS for  \Nedelec order 2 elements,  $h \neweq 2\pi/ (g \wn)$, impedance boundary condition on $\partial \Omega$, $\xiprob = 0$, $\xiprec = \wn$, $H_\text{sub}  \neweq (g\wn/(2\pi))^{-\alpha}$, $\Hcs  \neweq 2\pi/ (g_\text{cs} \wn)$, $\delta \sim h$, irregular subdomains built with METIS. In the Time columns we report the total time (the execution time for GMRES) in seconds. 
} 
\label{tab:6bis}
\end{center}
\end{table}
 
In Table \ref{tab:6bis} we repeat Table~\ref{tab:EpsPrec0} (with $\xiprec =\wn$) {but changing the discretization: we consider} \Nedelec order 2 elements, $h \neweq 2\pi/ (g \wn)$, $H_\text{sub}  \neweq (g\wn/(2\pi))^{-\alpha}$, $\Hcs  \neweq 2\pi/ (g_\text{cs} \wn)$, $g_\text{cs}=2 < g=20$ (i.e. 20 grid points per wavelength for the fine mesh and only 2 grid points per wavelength for the coarse mesh). Here the partition into subdomains is irregular and built with METIS. 
We can see that the two-level preconditioner combined with a high-order discretization and a fixed number of grid-points per wavelength performs even better than in the previous test cases, especially compared to the one-level preconditioner. The coarse grid size grows with $\mathcal{O}(n)$, but with a much smaller constant compared to the fine grid size, while the time grows with $\mathcal{O}(n^{{0.42}})$ for $\alpha = 0.6$ and with $\mathcal{O}(n^{{0.47}})$ for $\alpha = 0.5$.

 \subsection{A highly-heterogeneous practical example} 
 \label{subsec:medimax}

\begin{figure}
\centering
{\includegraphics[width=0.34\textwidth]{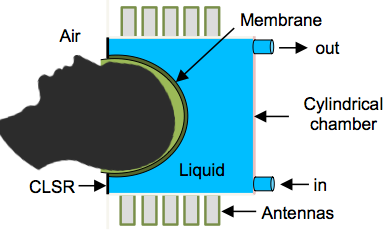}}
{\includegraphics[width=0.23\textwidth]{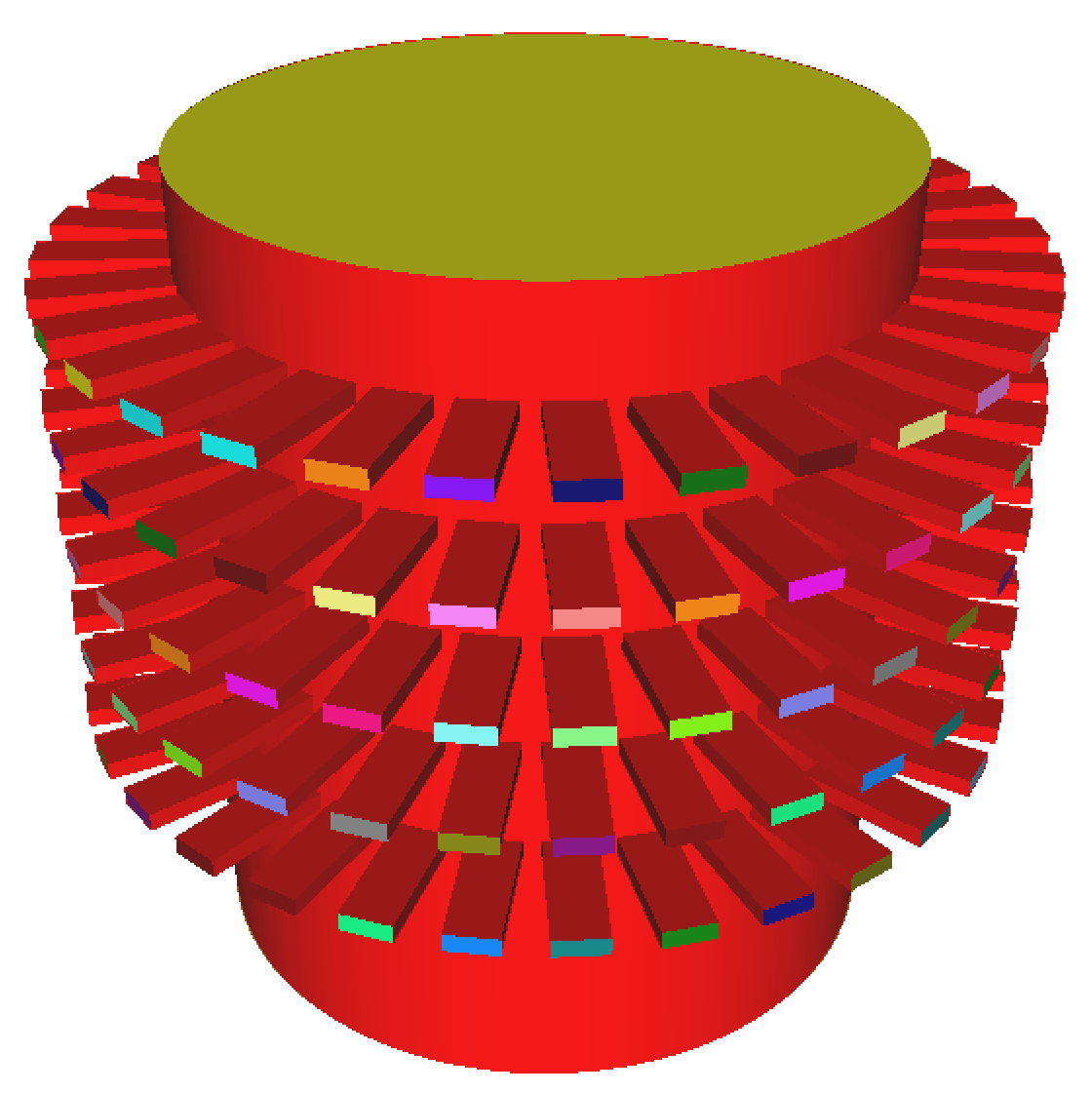}}
\includegraphics[width=0.41\textwidth]{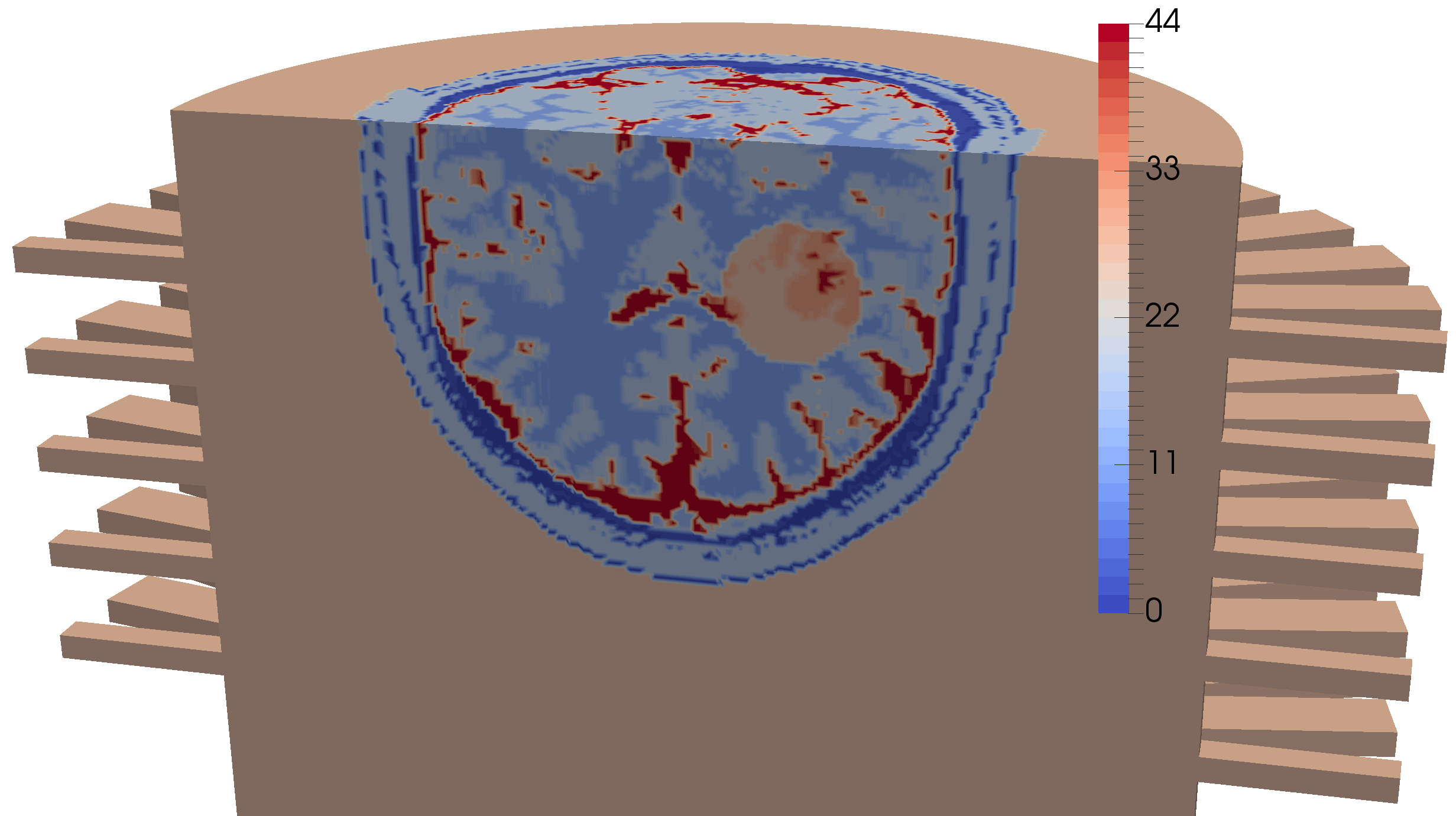} 
\caption{{The microwave imaging system prototype developed by EMTensor GmbH, the computational domain, and the imaginary part of the relative complex permittivity of a virtual head model immersed in the imaging chamber, with a simulated ellipsoid-shaped stroke.}} 
\label{fig:medimax}
\end{figure}

 \begin{table}
 \begin{center}
 \begin{tabular}{|c||c|c|c|c|}  
 \hline
  & \#2-level & Time & \#1-level & Time \\
 \hline
 {homogeneous} liquid & 28 & 63.4(8.6) & 30 & 53.1(6.4) \\
 head model & 28 & 64.1(9.2) & 32 & 53.4(6.9) \\
 {non-conductive} cylinder & 29 & 62.3(9.4) & 125 & 83.5(38.2) \\
 \hline
 \end{tabular}
 \caption{ImpHRAS for the microwave imaging system problem for three different material configurations inside the imaging chamber. In the Time columns we report the total time (the execution time for GMRES) in seconds. 
 } 
 \label{tab:medimax}
 \end{center}
 \end{table}

{As an example of a practical problem with $\xiprob > 0$,} we consider the modeling of a microwave imaging system, for the detection and monitoring of brain strokes. The prototype in Figure~\ref{fig:medimax} was developed by the company EMTensor GmbH and studied in the framework of the ANR project MEDIMAX \cite{paperPP:2016}. 
 In the full application, the data acquired with this device are used as input for an inverse problem associated with the time-harmonic Maxwell's equations~\eqref{eq:new1}, which makes it possible to estimate the complex electric permittivity $\varepsilon_{\sigma} := \varepsilon + \ri\, {\sigma}/{\omega}$ of the brain tissues of a patient affected by a stroke (observe that, with this definition, the coefficient $\mu(\eps \omega^2 +\ri \sigma\omega)$ in \eqref{eq:new1} can be rewritten as $\omega^2 \mu \varepsilon_{\sigma}$).
 Indeed, a stroke results in a variation of the complex electric permittivity inside a region of the brain, thus it can be detected and monitored by clinicians thanks to an image of the brain displaying the values of this property. 
 The absorption in this problem is given naturally by the non-null conductivity of brain tissues: the imaginary part of $\varepsilon_{\sigma}$ is typically of the same order as the real part, corresponding to $\xiprob\sim k^2$ in our notation. 
 
Here we solve the forward problem {(at frequency $1$\,GHz)} on the domain shown in Figure~\ref{fig:medimax} (center), discretized with order 1 edge finite elements using $40$ grid-points per wavelength, resulting in a linear system of size $n\approx1.6 \times 10^7$. 
 We take 729 subdomains (with minimal overlap) and for the two-level preconditioner a coarse problem of size $\ncs \approx 3.8 \times 10^4$.
 We test the two-level ImpHRAS preconditioner and the corresponding one-level version for three different material configurations: 
 the imaging chamber filled just with a homogeneous matching liquid, the virtual head model of Figure~\ref{fig:medimax}, and a plastic-filled cylinder immersed in the matching liquid; 
 the last test case is the most difficult because plastic is a non-conductive material. 
 In contrast to the previous experiments, we use the zero vector as the initial guess for GMRES because this gives lower iteration counts than a random initial guess.
 In Table~\ref{tab:medimax} we see that the performance of the one-level method deteriorates badly for the non-conductive cylinder, but the performance of the two-level method is uniform across all three cases, i.e.~it appears robust with respect to the type of heterogeneity.

\subsection{Electromagnetic scattering from a COBRA cavity} 
\label{subsec:cobra}

We now consider electromagnetic scattering from the COBRA cavity, which was designed and measured by EADS Aerospatiale Matra Missiles for Workshop EM-JINA 98 (see the description  in~\cite{liu2003scattering,jin2015finite}) and investigated in the framework of domain decomposition methods by, e.g.,~\cite{DoGaLaLePe:15}.

The cavity has PEC boundary conditions on its walls and we truncate the infinite domain of the scattering problem by a box enclosing the cavity. The sides of the box are placed 4 wavelengths away from the cavity in each direction, and the first order absorbing boundary condition~\eqref{eq:imp} is imposed on each side.
We consider a plane wave normally incident upon the cavity aperture for two frequencies, $f = 10$\,GHz and $f = 16$\,GHz, corresponding to $k=209$\,m$^{-1}$ and $k=335$\,m$^{-1}$ respectively.
The two problems are discretized with order $2$ edge elements using $10$ points per wavelength, resulting in $107$ million degrees of freedom at $10$\,GHz and $198$ million at $16$\,GHz. The mesh for the coarse problem corresponds to a discretization with $3.33$ points per wavelength.

\begin{figure}[h!]
\centering
\begin{minipage}[c]{.4\linewidth}
{\includegraphics[width=1\textwidth]{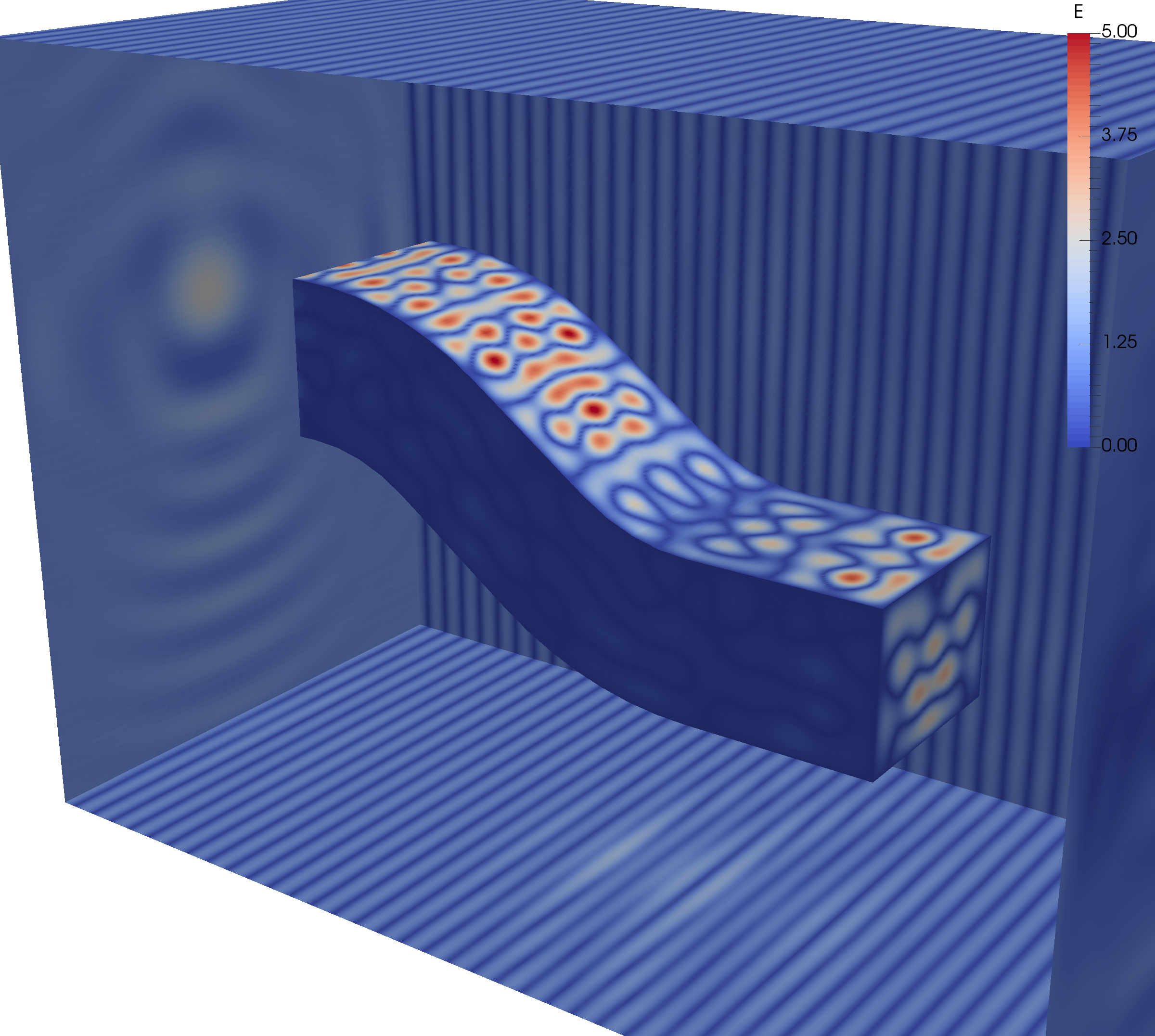}}
\end{minipage}
\hspace{.01\linewidth}
\begin{minipage}[c]{.5035\linewidth}
{\includegraphics[width=1\textwidth,trim=0cm 9cm 0cm 7cm,clip]{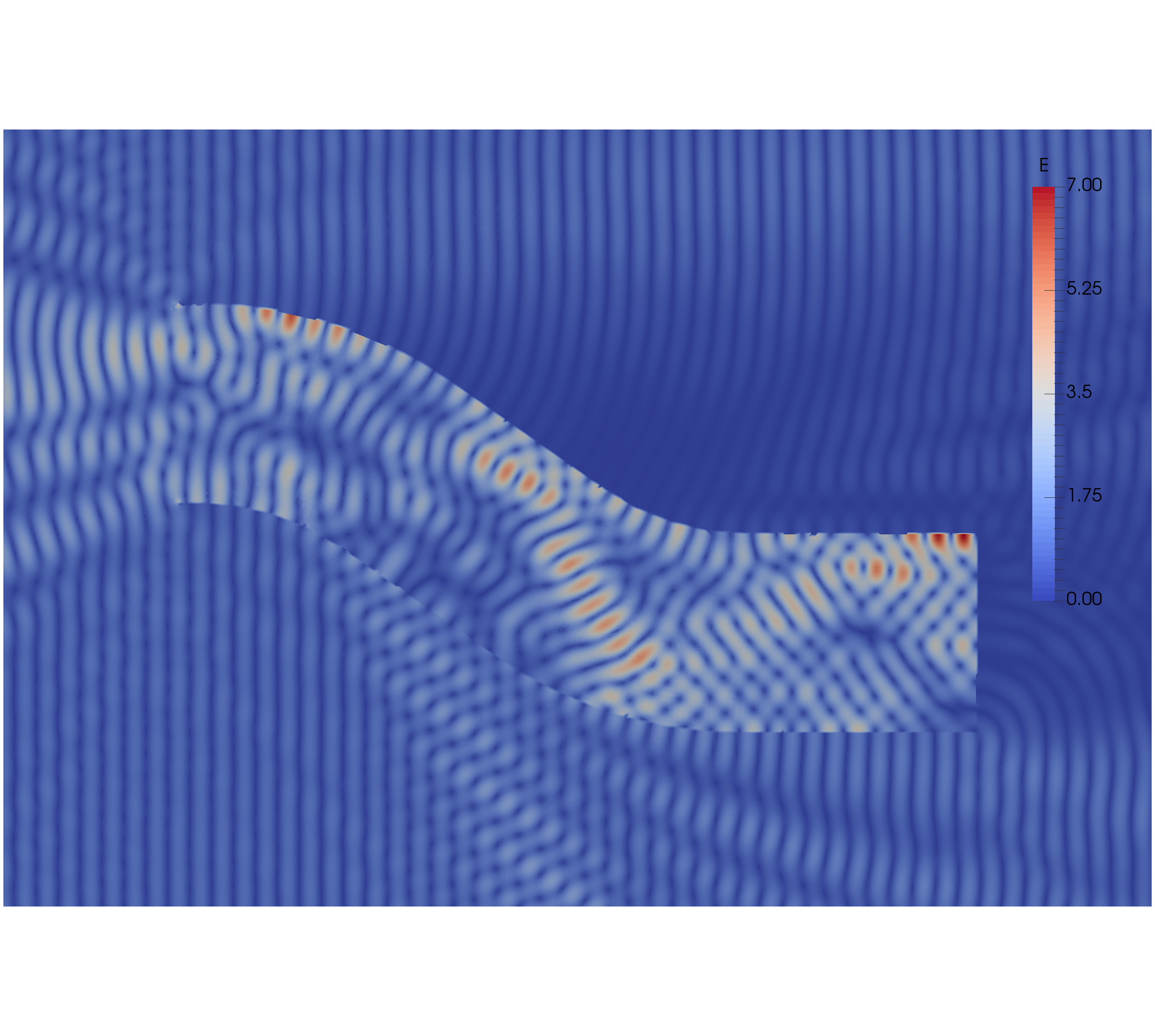}} 
\end{minipage}
\caption{Scattering from the COBRA cavity of a plane wave incident upon the cavity aperture at $f = 10$\,GHz (left) and $f = 16$\,GHz (right): magnitude of the electric field.} 
\label{fig:cobra}
\end{figure}

For such large simulations, the coarse problem becomes too large to be solved with a direct solver. We then need to use an iterative GMRES algorithm in order to solve the coarse problem at each application of the preconditioner. Following the ideas of~\cite[\S6]{GrSpVa:17}, we use the one-level ImpRAS preconditioner for the inner coarse solve. We  chose $\xiprec  =  \wn$ as a good balance between the convergence of the fine (outer) and coarse (inner) solves, since the effectiveness of the one-level preconditioner for the coarse solve improves as the added absorption becomes larger. We chose a relative tolerance of $10^{-1}$ for the inner GMRES coarse solve, which works surprisingly well, leading to only a few extra outer iterations in order to reach convergence.

\begin{table}[h!]
\begin{center}
\begin{tabular}{|c|c|c||c|c|c|c|c|c|c|}
\hline
\multicolumn{3}{|c||}{} & \multicolumn{7}{c|}{$g=10$, \quad $g_\text{cs} = 3.33$}\tabularnewline
\hline 
\multicolumn{1}{|c|}{} & \multicolumn{1}{c|}{} & \multicolumn{1}{c||}{} &\multicolumn{1}{c|}{} & \multicolumn{1}{c|}{} & \multicolumn{1}{c|}{Total \#} & \multicolumn{4}{c|}{Total times (seconds)}\tabularnewline \cline{7-10}
$f$ & $n$ & $N_\text{sub}$ & \# it & $\ncs$ & inner it & Total & Setup & GMRES & inner GMRES \tabularnewline
\hline
$10$GHz & $1.07 \times 10^8$ &1536 & 32 & $4.0 \times 10^6$ & 1527 & 515.8 & 383.2 & 132.6 & 61.8\\
$10$GHz & $1.07 \times 10^8$ & 3072 & 33 & $4.0 \times 10^6$ & 2083 & 285.0 & 201.6 & 83.4 & 40.6\\
\hline
$16$GHz & $1.98 \times 10^8$ & 3072 & 43 & $7.4 \times 10^6$ & 3610 & 549.2 & 336.8 & 212.4 & 118.6\\
$16$GHz & $1.98 \times 10^8$ & 6144 & 46 & $7.4 \times 10^6$ & 4744 & 363.0 & 210.5 & 152.5 & 96.8\\
\hline
\end{tabular}
\caption{A-DEF1-ImpRAS for a COBRA cavity, $\xiprob = 0$, $\xiprec = \wn$.}
\label{tab:cobra}
\end{center}
\end{table}

The results are displayed in Table~\ref{tab:cobra}, and Figure~\ref{fig:cobra} shows the computed scattered field. Experiments are performed with two different numbers of subdomains for both frequencies and indicate scalability of the algorithm. For $f = 10$\,GHz, we obtain a total time speedup of $1.81$ going from $1536$ to $3072$ cores. However, we observe weaker performance going to $16$\,GHz, with a speedup of $1.51$ from $3072$ to $6144$ cores, even though the number of outer GMRES iterations only increases from $43$ to $46$. This can be explained by the growth of the total number of inner coarse iterations: the total coarse solve time represents $11.8\%$ of the total time for $f = 10$\,GHz and 1536 subdomains, but goes up to $26.7\%$ of the total time for $f = 16$\,GHz and 6144 subdomains.

\smallskip
\noindent 
{\bf Acknowledgements.} 
For useful discussions, we thank Xavier Claeys (Universit\'e Pierre et Marie Curie), Jay Gopalakrishnan (Portland State University), Markus Melenk (TU Wien), Andrea Moiola (University of Reading), Peter Monk (University of Delaware), and Clemens Pechstein (Computer Simulation Technology (CST), Dahmstadt),
with the discussions with Moiola and Monk taking place at the LMS--EPSRC Durham Symposium on ``Mathematical and Computational Aspects of Maxwell's Equations" in July 2016.

This work was granted access to the HPC resources of TGCC at 
CEA and CINES
under the allocations 2016-067730 and 2017-ann7330 made by 
GENCI.
MB, VD, and PT gratefully acknowledge support from the French National Research Agency (ANR), project MEDIMAX, ANR-13-MONU-0012;
EAS gratefully acknowledges support from the EPSRC 
grant EP/R005591/1.

\begin{appendix}

\section{Proof of Lemma \ref{lem:reg1} ($\wn$- and $\abs$-explicit ${\bf H}^1$-regularity for $\Omega$ a convex polyhedron)}\label{app:2}

\bpf[Proof of Lemma \ref{lem:reg1}]
Our proof follows the proof of  \cite[Theorem 5.5.5]{Mo:11} (which in turn follows the reasoning in \cite{CoDaNi:10}).  
Let $\bE$ be the solution of the BVP \eqref{eq:BVP}; essentially identical arguments apply for the solution of the adjoint BVP \eqref{eq:BVPadj}.

We actually prove the more general result that, given $\bF\in {\bf L}^2(\Omega)\cap \Hdiv$ (i.e.~not necessarily with $\dive \bF=0$) then 
for $\abs\in\mathbb{R}\setminus\{0\}$ and $\wn>0$, the solution of the BVP \eqref{eq:BVP} is such that 
$\curl \bE\in {\bf H}^1(\Omega)$ and $\bE \in {\bf H}^1(\Omega)$,  
and furthermore, if $\abs$ satisfies \eqref{eq:alpha}, then, 
given $\wn_0>0$, 
\beq\label{eq:reg_result_app}
\N{\curl \bE}_{{\bf H}^1(\Omega)} + \wn\N{\bE}_{{\bf H}^1(\Omega)} \lesssim \Crego \wn\bigg( \big\|\curl \bE\big\|_{{\bf L}^2(\Omega)} + \wn\big\|\bE\big\|_{{\bf L}^2(\Omega)}\bigg) + \Cregt\N{\bF}_{{\bf L}^2(\Omega)}
+\frac{1}{\wn}\N{
\dive \bF
}_{\LtO},
\eeq
for all $\wn\geq\wn_0$.
Since $\bE\in \HocO$, we can use the regular decomposition (see, e.g., \cite[Lemma 2.4]{Hi:02}) to write $\bE$ as 
\beq\label{eq:reg0}
\bE= \bPhi + \grad \psi
\eeq
where $\bPhi\in {\bf H}^1(\Omega)$ and $\psi\in H_0^1(\Omega)$, with
\beq\label{eq:reg1}
\N{\bPhi}_{{\bf H}^1(\Omega)}+ \N{\psi}_{H^1(\Omega)} \lesssim \N{\bE}_{\HcO},
\eeq
where 
\beqs
\N{\bE}_{\HcO}:= \big(\N{\curl \bE}_{\bLtO}^2 + \N{\bE}_\bLtO^2\big)^{1/2}
\eeqs
(i.e.~$\|\cdot\|_{\weight}$ defined by \eqref{eq:weight} but without the $\wn^2$ weighting).
From \eqref{eq:reg0} we have
\beq\label{eq:reg1a}
\N{\bE}_{{\bf H}^1(\Omega)}\leq \N{\bPhi}_{H^1(\Omega)}+ \N{\psi}_{H^2(\Omega)},
\eeq
and so we need to obtain information about the $H^2$ norm of $\psi$. We first claim that $\Delta \psi\in L^2(\Omega)$. Indeed, from \eqref{eq:reg0} we have
\beq\label{eq:reg1b}
\Delta \psi = \dive\bE - \dive\bPhi.
\eeq
The PDE in \eqref{eq:BVP} 
implies that 
\beq\label{eq:reg1c}
\dive\bE= -\frac{1}{\wn^2+\ri \abs}\dive\bF,
\eeq
and thus $\dive \bE\in L^2(\Omega)$ since $\bF\in\Hdiv$.
The definition of $\bPhi$ implies that $\dive \bPhi\in L^2(\Omega)$, and so 
$\Delta \psi\in L^2(\Omega)$ follows from \eqref{eq:reg1b}.

The $H^2$ regularity theory for the Laplacian states that, if $u\in H_0^1(\Omega)$ with $\Delta u\in L^2(\Omega)$, then
\beq\label{eq:reg2}
\N{u}_{H^2(\Omega)} \lesssim \N{\Delta u}_{L^2(\Omega)} + \N{u}_{H^1(\Omega)},
\eeq
with this estimate holding \emph{both} when $\Omega$ is $C^{1,1}$ \cite[Theorem 2.3.3.2 (Page 106) and Theorem 2.4.2.5 (Page 124)]{Gr:85} \emph{and} when $\Omega$ is a convex polyhedron \cite[Equation 8.2.2, Page 357]{Gr:85}, \cite[Corollary 18.18]{Da:88a}. 
Using \eqref{eq:reg1}, \eqref{eq:reg1b}, and \eqref{eq:reg1c} in \eqref{eq:reg2} (with $u=\psi$), and recalling that $|\abs|\lesssim \wn^2$, 
we find 
\beqs
\|\psi\|_{H^2(\Omega)}\lesssim \N{\bE}_{\HcO} + \frac{1}{\wn^2} \N{\dive\bF}_{L^2(\Omega)},
\eeqs
and then using this, with \eqref{eq:reg1} again, in \eqref{eq:reg1a}, we find 
\beq\label{eq:reg2a}
\N{\bE}_{{\bf H}^1(\Omega)} \lesssim \N{\bE}_{\HcO}+ \frac{1}{\wn^2} \N{\dive\bF}_{L^2(\Omega)}.
\eeq

We now seek control of the ${\bf H}^1$ norm of $\curl \bE$. First, we claim that $\curl \bE \in \HcO$. Indeed, since $\bE \in \HocO$, we have that $\bE\in {\bf L}^2(\Omega)$, and the PDE in \eqref{eq:BVP} and the fact that $\bF\in {\bf L}^2(\Omega)$, then imply that $\curl(\curl \bE)\in {\bf L}^2(\Omega)$. Using the PDE and recalling the bound on $\abs$ \eqref{eq:alpha}, we then obtain
\beqs
\N{\curl(\curl \bE)}_{{\bf L}^2(\Omega)} \lesssim \wn^2 \N{\bE}_{{\bf L}^2(\Omega)} + \N{\bF}_{{\bf L}^2(\Omega)},
\eeqs
and thus 
\beq\label{eq:reg3}
\N{\curl \bE}_{\HcO} \lesssim\N{\curl \bE}_{{\bf L}^2(\Omega)} +\wn^2 \N{\bE}_{{\bf L}^2(\Omega)} +\N{\bF}_{{\bf L}^2(\Omega)}.
\eeq
We now use the other variant of the regular decomposition \cite[Lemma 2.4]{Hi:02} (recalling that, since $\Omega$ is simply connected, its first Betti number is zero) to write $\curl \bE\in \HcO$ as 
\beq\label{eq:reg3a}
\curl \bE = \bPsi^0 + \grad \phi,
\eeq
where $\bPsi^0\in H^1(\Omega)$ with $\dive\bPsi^0 =0$ and $\phi\in H^1(\Omega)$, and 
\beq\label{eq:reg4}
\N{\bPsi^0}_{{\bf H}^1(\Omega)} + \N{\phi}_{H^1(\Omega)} \lesssim \N{\curl \bE}_{\HcO}.
\eeq
From \eqref{eq:reg3a} we have
\beq\label{eq:reg5}
\N{\curl\bE}_{{\bf H}^1(\Omega)} \leq \N{\bPsi^0}_{{\bf H}^1(\Omega)} + \N{\phi}_{H^2(\Omega)},
\eeq
and, analogous to before, we need to obtain information about the $H^2$ norm of $\phi$.

Since $\dive(\curl\bE)=0$ and $\dive \bPsi^0=0$ we have that $\Delta \phi=0$. Furthermore, since 
$\bn\cdot( \curl \bv )= \dive_T(\bv\times \bn)$
\cite[Equation 6.38]{CoKr:98}, where $\dive_T$ is the surface divergence (see, e.g., \cite[\S3.4]{Mo:03}), we have $\bn \cdot (\curl \bE )=0$ on $\po$, and thus \eqref{eq:reg3a} implies that
\beqs
\partial_n \phi= -\bn \cdot \bPsi^0 \quad\ton\po.
\eeqs
We now need some information about the regularity of $\partial_n \phi$. When $\po$ is $C^{1,1}$, $\bn$ is $C^{0,1}$ and then $\bn \cdot\bPsi^0 \in H^{1/2}(\po)$ 
with 
\beqs
\N{\bn\cdot\bPsi^0}_{H^{1/2}(\po)} \lesssim \N{\bPsi^0}_{{\bf H}^{1/2}(\po)}
\eeqs
(where the omitted constant depends on $\po$) by either \cite[Theorem 1.4.1.1, page 21]{Gr:85} (with $p=2,\, s=1/2,\,k=0,$ and $\alpha=1$) or \cite[Page 140, \S2.8.2]{Tr:83}. 
Thus 
\beq\label{eq:reg7}
\N{\partial_n \phi}_{H^{1/2}(\po)}\lesssim \N{\bPsi^0}_{{\bf H}^1(\Omega)}
\eeq
by standard trace results (see, e.g., \cite[Theorem 3.37]{Mc:00}.

When $\Omega$ is a polyhedron, $\bn$ is piecewise $C^\infty$, and by the previous argument 
\beq\label{eq:reg7a}
\N{\partial_n \phi}_{H^{1/2}_{\rm pw}(\po)}\lesssim \N{\bPsi^0}_{{\bf H}^1(\Omega)},
\eeq
where now the relevant trace result is given by, e.g., \cite[Corollary 4.3]{BeDaMa:07}.
The $H^2$-regularity theory for the Laplacian states that, when $\Omega$ is $C^{1,1}$ and $u\in H^1(\Omega)$ with $\Delta u\in L^2(\Omega)$ and $\partial_n u \in H^{1/2}(\po)$, then
\beq\label{eq:reg8}
\N{u}_{H^2(\Omega)} \lesssim \N{\Delta u}_{L^2(\Omega)} + \N{u}_{H^1(\Omega)}+ \N{\partial_n u}_{H^{1/2}(\po)}
\eeq
 \cite[Theorem 2.3.3.2, Page 106]{Gr:85}.  
When $\Omega$ is a convex polyhedron, \eqref{eq:reg8} holds with the requirement $\partial_n u \in H^{1/2}(\po)$ replaced by $\partial_n u \in H^{1/2}_{\rm pw}(\po)$ and 
$\|\partial_n u\|_{H^{1/2}(\po)}$ on the right-hand side replaced by $\|\partial_n u\|_{H^{1/2}_{\rm pw}(\po)}$; this follows from \cite[Corollary 3.12]{Da:92} (see also \cite[Theorem 4]{Da:08}).

Using \eqref{eq:reg7}/\eqref{eq:reg7a} in \eqref{eq:reg8} with $u=\phi$, we find 
\beqs
\|\phi\|_{H^2(\Omega)}\lesssim \N{\Psi^0}_{{\bf H}^1(\Omega)}+ \N{\phi}_{H^1(\Omega)},
\eeqs
and then using \eqref{eq:reg4} we find $\|\phi\|_{H^2(\Omega)}\lesssim \N{\curl \bE}_{\HcO}$.
Using this, with \eqref{eq:reg4} again, in \eqref{eq:reg5}, we find 
\beqs
\N{\curl \bE}_{{\bf H}^1(\Omega)}\lesssim  \N{\curl \bE}_{\HcO}.
\eeqs
Combining this last inequality with \eqref{eq:reg3} and \eqref{eq:reg2a}, 
we obtain
\begin{align*}
\N{\curl \bE}_{{\bf H}^1(\Omega)}+ \wn \N{\bE}_{{\bf H}^1(\Omega)}
\lesssim \left( 1 + \wn\right) \N{\curl \bE}_\bLtO  
+ \wn^2 \N{\bE}_\bLtO +\N{\bF}_\bLtO+ \frac{1}{\wn} \N{\dive\bF}_{L^2(\Omega)}.
\end{align*}
the result \eqref{eq:reg_result_app} follows.
\epf

\section{The relative error,  at the level of PDEs, in approximating by the solution with absorption}
\label{app:1}

Recall that in \S\ref{sec:analyse} we split the analysis of preconditioning with absorption into achieving the two properties (i) 
$\matrixAepsinv$ is  a good preconditioner for $\matrixA$ 
and (ii) $\matrixBepsinv$ is  a good
preconditioner for $\matrixAeps$.
We now prove the PDE analogue of Property (i) for the interior impedance problem in heterogeneous media, i.e.~the result that, as $\omega\tendi$, the relative error between the solutions of the heterogeneous versions of \eqref{eq:1} and \eqref{eq:2}, supplemented with impedance boundary conditions, 
can be bounded to a prescribed accuracy (independent of $\omega$) if $|\abs|/\omega$ is sufficiently small.
The Helmholtz analogue of this result (in homogeneous media) is proved in \cite[Theorem 6.1]{GaGrSp:15}.

\begin{assumption}\label{ass:A1}
$\Omega$ is a bounded Lipschitz domain, $\eps, \mu\in L^{\infty}(\Omega)$, and the solution  $\bE$ of the interior impedance problem
\beqs
\curl\left(\frac{1}{\mu}
\curl \bE\right) - \omega^2 \eps \,\bE= \bF \,\,\tin\Omega
\,\,
\tand 
\,\,
\left(\frac{1}{\mu}
\curl \bE\right)\times \bn - \ri \omega (\bn \times\bE)\times \bn = \bze \,\,\ton\partial\Omega,
\eeqs
exists and satisfies the bound
\beq\label{eq:bound}
\N{\bE}_{\curlt, \omega}
\leq C_1(\eps,\mu)\N{\bF}_{{\bf L}^2(\Omega)}
\eeq
for all $\omega\geq \omega_0$, for some $\omega_0>0$.
\end{assumption}

\bre\textbf{\emph{(When is Assumption \ref{ass:A1} satisfied?)}}
When $\Omega$ is star-shaped with respect to a ball
and either 
\ben
\item $\eps= \mu = 1$ \cite[Theorem 5.4.5]{Mo:11}/\cite[Theorem 3.3]{HiMoPe:11}, or 
\item $\eps,\mu$ satisfy certain monotonicity conditions (which allow $\eps, \mu$ to jump), with these  conditions guaranteeing nontrapping of rays \cite{MoSp:17}.
\een
\ere

\begin{theorem}
Assume that $\eps, \mu \in L^\infty
(\Omega)$ are such that the solution  $\bE_\abs$ of 
\beqs
\curl\left(\frac{1}{\mu}
\curl \bE_\abs\right) - (\omega^2+ \ri \abs) \eps \,\bE_\abs= \bF \,\,\tin\Omega
\,\,
\tand
\,\,
\left(\frac{1}{\mu}
\curl \bE_\abs\right)\times \bn - \ri \omega (\bn \times\bE_\abs)\times \bn = \bg\,\,\ton\partial\Omega
\eeqs
exists for all $\abs\geq 0$; let $\bE:=\bE_0$. If Assumption \ref{ass:A1} holds, then there exists $C_2(\eps,\mu), C_3(\eps,\mu)>0$  and $\omega_0>0$ such that if 
\beqs
\frac{\abs}{\omega}\leq C_2(\eps,\mu) \quad\tfa \omega\geq \omega_0, \text{ then } \quad
\frac{
\N{\bE-\bE_\abs}_{\curlt,\omega}
}{
\N{\bE}_{\curlt,\omega}
}
\leq C_3(\eps,\mu) \frac{\abs}{\omega} \quad\tfa \omega\geq \omega_0.
\eeqs
\end{theorem}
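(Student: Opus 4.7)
The plan is a straightforward perturbation argument: subtract the equations satisfied by $\bE$ and $\bE_\abs$, apply the stability bound of Assumption \ref{ass:A1} to the difference, and close via the triangle inequality and a Neumann-type smallness hypothesis on $\abs/\omega$.

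First I would set $\be := \bE - \bE_\abs$. Since both problems have the same right-hand side $\bF$ and the same impedance data $\bg$, subtracting the two PDEs and the two impedance boundary conditions gives
\begin{equation*}
\curl\!\left(\tfrac{1}{\mu}\curl \be\right) - \omega^2 \eps\,\be \;=\; -\ri\abs\,\eps\,\bE_\abs \,\,\tin\Omega,
\end{equation*}
together with the homogeneous impedance boundary condition $\left(\tfrac{1}{\mu}\curl\be\right)\times\bn - \ri\omega(\bn\times\be)\times\bn = \bze$ on $\partial\Omega$. Thus $\be$ solves the impedance problem covered by Assumption \ref{ass:A1}, with forcing $-\ri\abs\,\eps\,\bE_\abs \in \bLtO$.

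Applying the stability bound \eqref{eq:bound} to $\be$ therefore yields
\begin{equation*}
\N{\be}_{\curlt,\omega} \;\leq\; C_1(\eps,\mu)\,|\abs|\,\N{\eps}_{L^\infty(\Omega)}\,\N{\bE_\abs}_{\bLtO}.
\end{equation*}
Now I would get rid of $\bE_\abs$ on the right-hand side. Writing $\bE_\abs = \bE - \be$ and using the elementary bound $\N{\bv}_{\bLtO} \leq \omega^{-1}\N{\bv}_{\curlt,\omega}$ (immediate from the definition \eqref{eq:weight} of the weighted norm), the triangle inequality gives
\begin{equation*}
\N{\be}_{\curlt,\omega} \;\leq\; \frac{C_1(\eps,\mu)\,\N{\eps}_{L^\infty}\,|\abs|}{\omega}\,\bigl(\N{\bE}_{\curlt,\omega} + \N{\be}_{\curlt,\omega}\bigr).
\end{equation*}

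Setting $\gamma := C_1(\eps,\mu)\N{\eps}_{L^\infty}|\abs|/\omega$, the restriction $|\abs|/\omega \leq C_2(\eps,\mu) := 1/(2C_1(\eps,\mu)\N{\eps}_{L^\infty})$ ensures $\gamma \leq 1/2$, and then the previous inequality rearranges to
\begin{equation*}
\N{\be}_{\curlt,\omega} \;\leq\; \frac{\gamma}{1-\gamma}\,\N{\bE}_{\curlt,\omega} \;\leq\; 2\gamma\,\N{\bE}_{\curlt,\omega},
\end{equation*}
which is the desired conclusion with $C_3(\eps,\mu) := 2C_1(\eps,\mu)\N{\eps}_{L^\infty}$. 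I do not foresee any genuine obstacle: the only points requiring care are (a) verifying that the forcing $-\ri\abs\eps\bE_\abs$ really is in $\bLtO$ so that Assumption \ref{ass:A1} is applicable (which follows from the assumed existence of $\bE_\abs \in \HcO$ and $\eps \in L^\infty$), and (b) ensuring that the constant $C_1$ in \eqref{eq:bound} is independent of $\omega$ for $\omega\geq\omega_0$, which is exactly what Assumption \ref{ass:A1} provides.
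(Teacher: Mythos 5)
Your proposal is correct and is essentially the same argument as the paper's: subtract the two BVPs, apply the stability bound of Assumption \ref{ass:A1} to the difference with forcing $-\ri\abs\,\eps\,\bE_\abs$, use the triangle inequality together with $\N{\bv}_{{\bf L}^2(\Omega)}\leq \omega^{-1}\N{\bv}_{\curlt,\omega}$, and absorb the error term under the smallness condition $\abs/\omega\leq 1/(2C_1\N{\eps}_{L^\infty})$. Even your constants $C_2$ and $C_3$ coincide with those in the paper.
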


\bpf
Subtracting the equations satisfied by $\bE$ and $\bE_\abs$, we find that $\bE-\bE_\abs$ satisfies the BVP
\beqs
\curl\left(\frac{1}{\mu}
\curl \big(\bE-\bE_\abs\big)\right) - \omega^2 \eps \,\big(\bE-\bE_\abs\big)= -\ri \abs \eps\bE_\abs \quad\tin\Omega
\eeqs
and
\beqs
\left(\frac{1}{\mu}
\curl \big(\bE-\bE_\abs\big)\right)\times \bn - \ri \omega \big(\bn \times(\bE-\bE_\abs)\big)\times \bn =\bze \quad\ton\po.
\eeqs
Applying the bound \eqref{eq:bound} and then using the triangle inequality, we have
\begin{align*}
\N{\bE-\bE_\abs}_{\curlt,\omega}&\leq C_1(\eps,\mu) \N{\eps}_{L^\infty(\Omega)}\abs \N{\bE_\abs}_{{\bf L}^2(\Omega)} \leq C_1(\eps,\mu)  \N{\eps}_{L^\infty(\Omega)}\abs \left(  \N{\bE-\bE_\abs}_{{\bf L}^2(\Omega)} + \N{\bE}_{{\bf L}^2(\Omega)}\right),\\
& \leq C_1(\eps,\mu)  \N{\eps}_{L^\infty(\Omega)} \frac{\abs}{\omega} \left(  \N{\bE-\bE_\abs}_{\curlt,\omega} + \N{\bE}_{\curlt, \omega}\right).
\end{align*}
Letting
\beqs
C_2(\eps,\mu):= \frac{1}{2C_1(\eps,\mu) \N{\eps}_{L^\infty(\Omega)}} \quad\tand\quad C_3(\eps,\mu):= 2C_1(\eps,\mu) \N{\eps}_{L^\infty(\Omega)},
\eeqs
the result follows.
\epf

\end{appendix}

\footnotesize{

}

\end{document}